\def\namedlabel#1#2{\begingroup
	#2%
	\def\@currentlabel{#2}%
	\phantomsection\label{#1}\endgroup
}
\DeclareMathOperator{\dv}{div}
\DeclareMathOperator{\loc}{loc}
\newcommand{\RR}{\mathbb{R}}
\newcommand{\mA}{\mathcal{A}}
\newcommand{\Om}{\Omega}
\newcommand{\na}{\nabla}
\newcommand{\pa}{\partial}
\newcommand{\La}{\Lambda}
\newcommand{\ep}{\epsilon}
\newcommand{\la}{\lambda}
\newcommand{\sig}{\sigma}
\newcommand{\cv}{\kappa}
\newcommand{\laz}{\lambda_{\mz}}
\newcommand{\mz}{w}
\newcommand{\mw}{v}
\newcommand{\data}{\mathit{data}}
\theoremstyle{plain}
\newtheorem{theorem}{Theorem}[section]
\newtheorem{lemma}[theorem]{Lemma}
\newtheorem{definition}[theorem]{Definition}
\newtheorem{remark}[theorem]{Remark}
\def\Xint#1{\mathchoice
	{\XXint\displaystyle\textstyle{#1}}%
	{\XXint\textstyle\scriptstyle{#1}}%
	{\XXint\scriptstyle\scriptscriptstyle{#1}}%
	{\XXint\scriptstyle\scriptscriptstyle{#1}}%
	\!\int}
\def\XXint#1#2#3{{\setbox0=\hbox{$#1{#2#3}{\int}$}
		\vcenter{\hbox{$#2#3$}}\kern-.5\wd0}}
\def\Yint#1{\mathchoice
	{\YYint\displaystyle\textstyle{#1}}%
	{\YYint\textstyle\scriptstyle{#1}}%
	{\YYint\scriptstyle\scriptscriptstyle{#1}}%
	{\YYint\scriptscriptstyle\scriptscriptstyle{#1}}%
	\!\iint}
\def\YYint#1#2#3{{\setbox0=\hbox{$#1{#2#3}{\iint}$}
		\vcenter{\hbox{$#2#3$}}\kern-.51\wd0}}
\def\longdash{{-}\mkern-3.5mu{-}} 
\def\fiint{\Yint\longdash}
\def\Xint#1{\mathchoice
	{\XXint\displaystyle\textstyle{#1}}%
	{\XXint\textstyle\scriptstyle{#1}}%
	{\XXint\scriptstyle\scriptscriptstyle{#1}}%
	{\XXint\scriptscriptstyle\scriptscriptstyle{#1}}%
	\!\int}
\def\XXint#1#2#3{{\setbox0=\hbox{$#1{#2#3}{\int}$ }
		\vcenter{\hbox{$#2#3$ }}\kern-.6\wd0}}
\def\dashint{\Xint-}
\DeclareMathOperator{\diam}{diam}
\let\orgdescriptionlabel\descriptionlabel
\renewcommand*{\descriptionlabel}[1]{%
	\let\orglabel\label
	\let\label\@gobble
	\phantomsection
	\edef\@currentlabel{#1}%
	\let\label\orglabel
	\orgdescriptionlabel{#1}%
}
\numberwithin{equation}{section}
\def\Xint#1{\mathchoice
    {\XXint\displaystyle\textstyle{#1}}%
    {\XXint\textstyle\scriptstyle{#1}}%
    {\XXint\scriptstyle\scriptscriptstyle{#1}}%
    {\XXint\scriptscriptstyle\scriptscriptstyle{#1}}%
    \!\int}
\def\XXint#1#2#3{\setbox0=\hbox{$#1{#2#3}{\int}$}
    \vcenter{\hbox{$#2#3$}}\kern-0.5\wd0}
\def\fint{\Xint-}
\def\dashint{\Xint{\raise4pt\hbox to7pt{\hrulefill}}}
\def\XXiint#1#2#3{\setbox0=\hbox{$#1{#2#3}{\iint}$}
    \vcenter{\hbox{$#2#3$}}\kern-0.5\wd0}
\begin{document}
	
\title[Higher integrability for degenerate systems]{Gradient higher integrability for degenerate parabolic double-phase systems}

\author{Wontae Kim}
\address[Wontae Kim]{Research Institute of Mathematics, Seoul National University, Seoul 08826, Korea.}
\email{m20258@snu.ac.kr}

\author{Juha Kinnunen}
\address[Juha Kinnunen]{Department of Mathematics, Aalto University, P.O. BOX 11100, 00076 Aalto, Finland}
\email[Corresponding author]{juha.k.kinnunen@aalto.fi}

\author{Kristian Moring}
\address[Kristian Moring]{Department of Mathematics, Aalto University, P.O. BOX 11100, 00076 Aalto, Finland}
\email{kristian.moring@aalto.fi}

\everymath{\displaystyle}

\makeatletter
\@namedef{subjclassname@2020}{\textup{2020} Mathematics Subject Classification}
\makeatother

\begin{abstract}
We prove a local higher integrability result for the gradient of a weak solution to degenerate parabolic double-phase systems of $p$-Laplace type. 
This result comes with reverse H\"older type estimates. 
The proof is based on a careful phase analysis, estimates in the intrinsic geometries and stopping time arguments.
\end{abstract}

\keywords{Parabolic double-phase systems, parabolic $p$-Laplace systems, gradient estimates}
\subjclass[2020]{35D30, 35K55, 35K65}
\maketitle
\section{Introduction}
This paper discusses the local higher integrability of the spatial gradient of a weak solution to a double-phase parabolic system
\begin{align}\label{sec1:1}
	u_t-\dv\mA(z,\na u)=-\dv(|F|^{p-2}F+a(z)|F|^{q-2}F)
\text{ in $\Om_T$},
\end{align}
where $z=(x,t)$, $\Omega_T=\Omega\times(0,T)$ is a space-time cylinder with a bounded open set $\Omega\subset\RR^n$ for $n\ge2$ and $2\le p<q<\infty$. 
Here $\mA(z,\na u):\Omega_T\times \RR^{Nn}\longrightarrow \RR^{Nn}$ with $N\ge1$ is a Carath\'eodory vector field satisfying the following structure assumptions: There exist constants $0<\nu\le L<\infty$ such that
	\begin{equation}\label{sec1:2}
			\mA(z,\xi)\cdot \xi\ge \nu(|\xi|^p+a(z)|\xi|^q)\quad\text{and}\quad
			|\mA(z,\xi)|\le L(|\xi|^{p-1}+a(z)|\xi|^{q-1})
	\end{equation}
for almost every $z\in \Omega_T$ and every $\xi\in \RR^{Nn}$. Furthermore, the source term $F:\Omega_T\longrightarrow\RR^{Nn}$ satisfies
\begin{align*}
	\iint_{\Omega_T} H(z,|F|)\ dz=\iint_{\Om_T}(|F|^p+a(z)|F|^q)\,dz<\infty.
\end{align*}
Here we denote $H(z,s):\Omega_T\times \RR^{+ }\longrightarrow \RR^+$,
\begin{align*}
	H(z,s)=s^p+a(z)s^q.
\end{align*}
We assume that the non-negative coefficient function $a:\Omega_T\longrightarrow \RR^+$ satisfies
\begin{align}\label{sec1:4}
	q\le p+\frac{2\alpha}{n+2}
	\quad\text{and}\quad 
	a\in C^{\alpha,\frac\alpha2}(\Omega_T)\text{ for some }\alpha\in(0,1].
\end{align}
Here $a\in C^{\alpha,\frac\alpha2}(\Omega_T)$ means that $a\in L^{\infty}(\Omega_T)$ and that there exists a constant $[a]_{\alpha,\frac\alpha2;\Omega_T}<\infty$ such that
\begin{align*}
	\begin{split}
		|a(x,t)-a(y,t)|\le [a]_{\alpha,\frac\alpha2;\Omega_T}|x-y|^\alpha\quad\text{and}\quad
		|a(x,t)-a(x,s)|\le [a]_{\alpha,\frac\alpha2;\Omega_T}|t-s|^\frac{\alpha}{2}
	\end{split}
\end{align*}
for every $(x,y)\in\Omega$ and $(t,s)\in (0,T)$. 
For short we denote $[a]_{\alpha}=[a]_{\alpha,\frac\alpha2;\Omega_T}$.

We summarize the existing related results in the elliptic and parabolic cases. 
The elliptic double-phase system 
\begin{align*}
		-\dv(|\na u|^{p-2}\na u+a(x)|\na u|^{q-2}\na u)
		=-\dv(|F|^{p-2}F+a(x)|F|^{q-2}F)
\end{align*}
in $\Om$, where 
\begin{align}\label{sec1:7}
	1<p<q\le p+\frac{\alpha p}{n}
	\quad\text{and}\quad 
	a(x)\in C^{\alpha}(\Om)\text{ for some }\alpha\in(0,1],
\end{align}
models a class $(p,q)$-growth problems related to strongly anisotropic materials in the contexts of homogenization and nonlinear elasticity, see \cite{MR864171,MR1209262,MR1329546}.
The proper function space for weak solutions is 
$u\in W^{1,1}(\Om,\RR^N)$ with 
\begin{align*}
		\int_{\Omega} H(x,|\nabla u|)\, dx=\int_{\Omega}(|\nabla u|^p+a(x)|\nabla u|^q)\, dx<\infty.
	\end{align*}
Under \eqref{sec1:7} it has been proved that $|\na u|\in L_{\loc}^q(\Om)$ in \cite{MR2076158} (see also \cite{MR969900,MR1094446} for the $(p,q)$-growth problems).  Harnack's inequality, H\"older continuity, gradient H\"older continuity, gradient higher integrability and Calder\'on-Zygmund type estimates have been discussed in \cite{MR3348922,MR3294408,MR3447716,MR3985927} (see also \cite{MR4397041,MR4467321}). 
For applications and more information, we refer to \cite{MR4258810,MR1810360}. 
A standard approach in the elliptic double-phase systems is to consider two cases: For each ball $B_r(x_0)\subset\Om$, either 
\begin{align}\label{sec1:8}
	\inf_{B_{r}(x_0)}a(x)\le [a]_{\alpha }r^\alpha\quad\text{or}\quad \inf_{B_{r}(x_0)}a(x)> [a]_{\alpha}r^\alpha.
\end{align}
The first condition in \eqref{sec1:8} is called the $p$-phase and in this case the behavior is similar to the $p$-Laplace systems in $B_r(x_0)$. 
The second condition in \eqref{sec1:8} implies that
\begin{align}\label{sec1:9}
	\sup_{B_{r}(x_0)}a(x)< 2\inf_{B_r(x_0)}a(x)
\end{align}
and this leads to the behavior similar to the $(p,q)$-Laplace systems in $B_r(x_0)$.
For this reason the second condition in \eqref{sec1:8} is called the $(p,q)$-phase. 

Parabolic double-phase problems have not been investigated until very recently. 
The existence of weak solutions to \eqref{sec1:1} has been considered in \cite{MR3985549,MR3532237}.
These results seem to cover different ranges of exponents already in the stationary case, see \cite{MR4487513}.
It has been proved in \cite{MR3532237} by using the difference quotient method that $|\na u|\in L^q_{\loc}(\Om_T)$ under appropriate structural assumptions (see also \cite{MR4477803,MR3102165,MR3073153,MR4150873,MR4065088} for the $(p,q)$-growth problems).

The main result of this paper is the following a priori estimate for the gradient of a weak solution to \eqref{sec1:1}.
We denote $Q_{r}(z_0)=B_{r}(x_0)\times (t_0-r^2,t_0+r^2)$ and 
  \begin{align*}
		\mathit{data}=(n,N,p,q,\alpha,\nu,L,[a]_{\alpha},\diam(\Omega),\|u\|_{L^\infty(0,T;L^2(\Omega))},\|H(z,|\na u|)\|_{L^1(\Omega_T)},\|H(z,|F|)\|_{L^1(\Omega_T)}).
\end{align*}

\begin{theorem}\label{main_theorem}
	Assume that \eqref{sec1:2} and \eqref{sec1:4} hold true and let $u$ be a weak solution to \eqref{sec1:1}. 
	Then there exist constants $0<\ep_0=\ep_0(\mathit{data})$ and $c=c(\mathit{data},\lVert a\rVert_{L^\infty(\Om_T)})\ge1$ such that
	\begin{align*}
		\begin{split}
			&\fiint_{Q_{r}(z_0)}H(z,|\na u|)^{1+\ep}\,dz
			\le c \left(\fiint_{Q_{2r}(z_0)}H(z,|\na u|)\,dz\right)^{1+\frac{\ep q}{2}}\\
			&\qquad\qquad\qquad+c\left(\fiint_{Q_{2r}(z_0)}(H(z,|F|)+1)^{1+\ep}\,dz\right)^{\frac{q}{2}}
		\end{split}
	\end{align*}
	 for every $Q_{2r}(z_0)\subset\Om_T$ and $\ep\in(0,\ep_0)$.
\end{theorem}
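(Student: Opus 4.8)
The plan is to follow the by-now classical Giaquinta--Modica--Gehring scheme adapted to the intrinsic geometry of the parabolic double-phase operator, in the spirit of the higher integrability results for the parabolic $p$-Laplacian and for elliptic double-phase problems. The first step is to fix a reference cylinder $Q_{2r}(z_0)\Subset\Om_T$, introduce the normalized energy density $g:=|\na u|^p+a(z)|\na u|^q$ and $h:=|F|^p+a(z)|F|^q+1$, and show that at almost every point where the maximal function of $g$ is large one can construct an \emph{intrinsic cylinder} on which the average of $g$ is comparable to a chosen level $\la$. Because the operator has two scalings ($p$ and $q$) and the coefficient $a$ oscillates, the intrinsic cylinder must be chosen so that its time-scaling matches whichever phase dominates on it; the condition $q\le p+\tfrac{2\al}{n+2}$ together with the H\"older continuity of $a$ is exactly what guarantees that $a$ is essentially constant at the scale of the intrinsic cylinder (an ``$a(z_0)\approx \dashint a$'' type estimate), so that the cylinder built for the level $\la$ is genuinely self-improving. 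This stopping-time / Vitali covering construction is the technical heart and is where I expect the main obstacle to lie: one has to run the stopping time simultaneously against the $p$-intrinsic and $q$-intrinsic scalings, check that the radii produced are admissible (i.e. the cylinders stay inside $Q_{2r}$), and verify that the two geometries are compatible on the overlap, which forces careful bookkeeping of the constants and of the dependence on $\|a\|_{L^\infty}$.

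The second step is the reverse H\"older inequality on an intrinsic cylinder. On such a cylinder $Q$ with $\dashint_Q g\approx\la$ I would combine the Caccioppoli/energy estimate (Lemma~\ref{sec3:lem:1}) with a parabolic Sobolev--Poincar\'e inequality to absorb the $\sup_t\int|u-(u)|^2$ term, exploiting that on the intrinsic cylinder the time length $\ell^2$ is tuned to $\la$ so that the parabolic scaling balances. The outcome is an estimate of the form
\begin{align*}
	\fiint_{Q}g\,dz\le c\left(\fiint_{2Q}g^{d_0}\,dz\right)^{1/d_0}+c\,\fiint_{2Q}h\,dz
\end{align*}
for some exponent $d_0=d_0(n,p,q)\in(0,1)$, i.e. a reverse H\"older inequality with the right-hand side measured in a lower $L^{d_0}$-average, where $2Q$ denotes the concentric dilate. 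The exponents $\tfrac p2+\tfrac q2$ that appear in the statement of the theorem come precisely from undoing the intrinsic scaling: passing from the intrinsic average back to a Euclidean average of $g$ multiplies the level by a power determined by how the time-length was defined in terms of $\la$, and it is the larger exponent $q$ that dictates the worst case, producing the product form $1+(\tfrac p2+\tfrac q2)\ep$ on the first term and the outer power $\tfrac p2+\tfrac q2$ on the second.

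The third step is to globalize: using the intrinsic reverse H\"older inequalities together with the Vitali-type covering of the super-level set $\{Mg>\la\}$ by intrinsic cylinders, one derives a good-$\la$ inequality comparing the measure of $\{g>\la\}$ with that of $\{g>\de\la\}$ plus a tail term coming from $h$. Summing this over a geometric sequence of levels $\la_k=\la_0 B^k$ (a Gehring-type iteration), one gains a small exponent: there is $\ep_0=\ep_0(\data)$ such that for $\ep\in(0,\ep_0)$ the function $g^{1+\ep}$ is integrable on $Q_r(z_0)$ with the claimed quantitative bound. The only subtlety here is that the gain is measured in the intrinsic scale, so one last time one must convert the intrinsic $L^{1+\ep}$-bound into the Euclidean statement of Theorem~\ref{main_theorem}; this conversion is where the powers $1+(\tfrac p2+\tfrac q2)\ep$ and $\tfrac p2+\tfrac q2$ reappear, and where the dependence of $c$ on $\|a\|_{L^\infty(\Om_T)}$ (rather than merely on $\data$) enters, through the comparison between $g$ and $|\na u|^p$ on cylinders where $a$ may be large. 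I expect the stopping-time construction in step one and the careful tracking of the double scaling through steps two and three to be the parts requiring the most care; the Gehring iteration itself is standard once the good-$\la$ inequality is in hand.
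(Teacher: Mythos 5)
Your outline follows the same architecture as the paper---intrinsic cylinders adapted to the two phases, a stopping-time/Vitali argument, Caccioppoli plus parabolic Sobolev--Poincar\'e estimates yielding a reverse H\"older inequality with a sub-unitary exponent on the right, and a level-set (Gehring-type) iteration at the end---so the strategy is right. But the two steps you defer as ``the technical heart'' are exactly where the paper's new ideas live, and without them the scheme does not close. The missing device is the phase criterion: it is \emph{not} the elliptic dichotomy \eqref{sec1:8} on $\inf_B a$ versus $[a]_{\al}r^\al$, nor a comparison of ``which phase dominates the average''. Instead, for each stopping point $\mathfrak z$ in the super-level set $E(\La)$ one solves $\La=\la^p+a(\mathfrak z)\la^q$ for $\la=\laz$ and splits according to $K\laz^p\ge a(\mathfrak z)\laz^q$ or $K\laz^p\le a(\mathfrak z)\laz^q$, as in \eqref{sec1:10}. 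In the first case one works in the $p$-intrinsic cylinder $Q_\rho^{\laz}$ exactly as for the parabolic $p$-Laplacian; in the second, one first runs the stopping time in the $p$-intrinsic geometry (possible because $\mathfrak z\in E(\laz^p)$) and then passes to the smaller $(p,q)$-intrinsic cylinder $G_\rho^{\laz}\subset Q_\rho^{\laz}$. What makes this work is that $K=K(n,\al,[a]_{\al},M_1)$ can be chosen so large that the second alternative \emph{forces} $a(\mathfrak z)\ge 2[a]_{\al}(10\rho_{\mathfrak z})^\al$, hence $a\approx a(\mathfrak z)$ on the cylinder; the remaining combination is shown to be impossible using the stopping identity $\fiint_{Q_{\rho_{\mathfrak z}}^{\laz}}(H(z,|\na u|)+H(z,|F|))\,dz=\laz^p$ together with $\rho_{\mathfrak z}^{n+2}\laz^2\lesssim M_1$ and $q\le p+\tfrac{2\al}{n+2}$. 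Your reading of the gap condition as ``$a$ is essentially constant at the scale of the intrinsic cylinder'' is only the conclusion in the $(p,q)$-phase; it is neither available nor needed in the $p$-phase.

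The second gap is the covering lemma for the resulting \emph{mixed} family of $p$- and $(p,q)$-intrinsic cylinders with point-dependent scalings. Compatibility of the time intervals of two intersecting cylinders reduces to the comparability $\la_{\mathfrak z}\approx\la_{\mathfrak w}$ of the intrinsic parameters, which is again extracted from the stopping identities and $q\le p+\tfrac{2\al}{n+2}$ (estimate \eqref{sec6:18}), and four case combinations must be checked separately. Finally, two corrections of emphasis: the paper's last step is the continuous version of your good-$\la$ iteration (integrate the level-set reverse H\"older estimate against $\La^{\ep-1}\,d\La$, apply Fubini, truncate $H$ at level $k$, and absorb via Lemma~\ref{sec2:lem:2}); and both the exponent $\tfrac p2+\tfrac q2$ and the dependence of $c$ on $\lVert a\rVert_{L^\infty(\Om_T)}$ enter through the threshold level $\La_0=\la_0^p+\sup_{Q_{2r}(z_0)}a\,\la_0^q$ with $\la_0^2=\fiint_{Q_{2r}(z_0)}(H(z,|\na u|)+H(z,|F|))\,dz+1$, rather than through undoing the intrinsic scaling cylinder by cylinder.
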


As far as we are aware this is the first regularity result for parabolic double-phase problems under the general structural conditions \eqref{sec1:2} and \eqref{sec1:4}. We consider weak solutions that satisfy a technical assumption $|\na u|\in L^q(\Om_T)$, see Definition \ref{weak_solution}.
It is also possible to obtain the main result under the assumption
\begin{align*}
		\iint_{\Omega_T} H(z,|\na u|)\, dz=\iint_{\Omega_T}(|\nabla u|^p+a(z)|\nabla u|^q)\,dz<\infty,
	\end{align*}
by applying a parabolic Lipschitz truncation, see Remark \ref{weak_solution_remark}.
This technique is out of the scope of this paper and it is discussed in \cite{KKS}.
This extends the corresponding results for the $p$-Laplace systems $(a(z)\equiv0)$ in \cite{MR1749438}, 
where a reverse H\"older inequality for the gradient has been proved in $p$-intrinsic cylinders
\begin{align*}
Q_{\rho}^\la(z_0)=B_{\rho}(x_0)\times (t_0-\la^{2-p}\rho^2,t_0+\la^{2-p}\rho^2)
\end{align*}
by using parabolic Caccioppoli and Poincar\'e inequalities and a stopping time argument. 
Appropriate intrinsic cylinders have to be considered for other parabolic systems. The parabolic $(p,q)$-Laplace system ($a(z)\equiv a_0$ for some constant $a_0>0$) was considered in \cite{MR4302665} and the gradient reverse H\"older inequality was proved in $(p,q)$-intrinsic cylinders
\begin{align*}
G_{\rho}^\la(z_0)=B_{\rho}(x_0)\times \left(t_0-\tfrac{\la^2}{\la^p+a_0\la^q}\rho^2,t_0+\tfrac{\la^2}{\la^p+a_0\la^q}\rho^2\right).
\end{align*}
In \cite{MR2779582}, the gradient higher integrability result has been discussed for the parabolic $p(\cdot)$-Laplace type system
\begin{align*}
	u_t-\dv( |\na u|^{p(z)-2}\na u)=-\dv(|F|^{p(z)-2}F)
\end{align*}
in $\Om_T$, where $p(\cdot):\Om_T \longrightarrow  \RR^+$ is a continuous function with
\begin{align*}
	\frac{2n}{n+2}<\inf_{z\in \Om_T}p(z)\le p(\cdot)\le \sup_{z\in \Om_T}p(z)<\infty
\end{align*}
and $p(\cdot)$ satisfies a logarithmic modulus of continuity condition. In this case the intrinsic cylinders are of the form
\begin{align*}
	B_{\rho}(x_0)\times \bigl(t_0-\la^{\frac{2-p(z_0)}{p(z_0)}}\rho^2,t_0+\la^\frac{2-p(z_0)}{p(z_0)}\rho^2\bigr).
\end{align*}

Several new features appear in the parabolic double-phase problem ~\eqref{sec1:1} compared to the $p$-Laplace systems in \cite{MR1749438} and to the $(p,q)$-case in \cite{MR4302665}.
The first novelty in our argument is that we provide a new criterion replacing \eqref{sec1:8} in order to be able to adopt the stopping time argument with intrinsic cylinders in \cite{MR1749438}.
For each point 
\[
z_0\in\{ z\in \Omega_T:|\na u(z)|^p+a(z)|\na u(z)|^q>\La\},
\]
we consider $\la=\la(z_0)>0$ such that $\La=\la^p+a(z_0)\la^q$.
Employing the fact that $s \to s^p + a(z_0)s^q$  is strictly increasing and
noting that $z_0\in\{ z\in \Omega_T:|\na u(z)|^p>\la^p\}$,
 we may apply a stopping time argument with the $p$-intrinsic cylinders. 
The second novelty is to consider two alternatives: For $K>1$, either 
\begin{align*}
	K\la^p\ge a(z_0)\la^q\quad\text{or}\quad K\la^p\le a(z_0)\la^q.
\end{align*}
These are called $p$-intrinsic and $(p,q)$-intrinsic cases, respectively. The $p$-intrinsic case is related to the $p$-Laplace systems and the $(p,q)$-intrinsic case is related to the $(p,q)$-problems. For the double-phase problems we have to consider both of them. We are convinced that this technique will be useful in other regularity results for parabolic doubly-nonlinear problems.
In the $p$-intrinsic case, it is possible to obtain the reverse H\"older inequality in the $p$-intrinsic cylinders as in \cite{MR1749438}.
Roughly speaking, we have
\begin{align*}
	|\na u|^{p-2}\na u+a(z)|\na u|^{q-2}\na u&\approx |\na u|^{p-2}\na u+K\la^{p-q}|\na u|^{q-2}\na u
	\approx |\na u|^{p-2}\na u
\end{align*}
in the stopping time argument with a $p$-intrinsic cylinder. On the other hand, the $(p,q)$-intrinsic case implies the second condition in \eqref{sec1:8} for a sufficiently large $K>1$. 
This leads \eqref{sec1:9} and we have
\begin{align*}
	|\na u|^{p-2}\na u+a(z)|\na u|^{q-2}\na u\approx |\na u|^{p-2}\na u+a(z_0)|\na u|^{q-2}\na u
\end{align*}
in the stopping time argument with $p$-intrinsic cylinder.
Consequently, we may apply the $(p,q)$-intrinsic cylinders to obtain the reverse H\"older inequality. Note that 
\[
z_0\in \{ z\in \Omega_T:|\na u(z)|^p+a(z)|\na u(z)|^q>\la^p+a(z_0)\la^q\}
\]
and $G_\rho^\la(z_0)\subset Q_\rho^\la(z_0)$ with $a_0=a(z_0)$. Thus, it is possible to obtain a stopping time argument in the $(p,q)$-intrinsic cylinders from the stopping time argument in the $p$-intrinsic cylinders.  
Finally, the continuity of $a(\cdot)$ implies the continuity of $\la(\cdot)$ and this enables us to prove a Vitali type covering lemma. The desired estimate follows by using Fubini's theorem.

\section{Energy estimates}

We apply the following definition of weak solution.
\begin{definition}\label{weak_solution}
	A function $u:\Om_T\longrightarrow\RR^N$ with
	\begin{align*}
			u\in C(0,T;L^2(\Om,\RR^N))\cap L^q(0,T;W^{1,q}(\Om,\RR^N))
	\end{align*}
is a weak solution to \eqref{sec1:1}, if
\begin{align*}
	\iint_{\Om_T}(-u\cdot\varphi_t+\mA(z,\na u)\cdot \na \varphi)\,dz=\iint_{\Om_T}(|F|^{p-2}F\cdot\na \varphi+a(z)|F|^{p-2}F\cdot \na\varphi)\,dz
\end{align*}
for every $\varphi\in C_0^\infty(\Omega_T,\RR^N)$.
\end{definition}

\begin{remark}\label{weak_solution_remark}
	A more standard assumption on the function space would be
 \begin{align*}
			u\in C(0,T;L^2(\Om,\RR^N))\cap L^1(0,T;W^{1,1}(\Om,\RR^N))
	\end{align*}
 with
	\begin{align*}
		\iint_{\Omega_T} H(z,|\na u|)\, dz=\iint_{\Omega_T}(|\nabla u|^p+a(z)|\nabla u|^q)\,dz<\infty.
	\end{align*}
	However, this assumption does not seem to be enough in the proof of the energy estimate using the Steklov averages, see Lemma \ref{sec3:lem:1} below. This unexpected challenge does not occur in the elliptic case, since the mollification in time is not needed.
    It is possible to derive Lemma~\ref{sec3:lem:1} under the natural function space assumption above by a parabolic Lipschitz truncation technique, see \cite{KKS}.  We emphasize that the assumption $|\na u|\in L^q(\Om_T)$ is only applied in the proof of Lemma~\ref{sec3:lem:1} and it is not needed in the rest of the paper. With this observation Theorem~\ref{main_theorem} holds true also under the natural function space assumption above.
\end{remark}

In the rest of this section, we provide three energy estimates. The first lemma is a parabolic double-phase Caccioppoli inequality.
In general, the time derivative of a weak solution does not belong to $L^2$ and does not even exist a priori. 
To be able to derive a suitable energy estimate, we use the following mollification in time. We define the Steklov average $f_h$, with $0<h<T$, of $f\in L^1(\Omega_T)$ by
\begin{align*}
	f_h(x,t)=
	\begin{cases}
		\fint_t^{t+h}f(x,s)\,ds,&0<t<T-h,\\
		0,&T-h\le t.
	\end{cases}
\end{align*}
For the properties of Steklov averages, we refer to~\cite{MR1230384}.

We apply the following notation. 
A space-time cylinder in $\RR^{n+1}$ is denoted by 
\[
Q_{R,\ell}(z_0)=B_{R}(x_0)\times (t_0-\ell,t_0+\ell),
\quad r>0,\quad l>0,
\]
 and the integral average of $u$ over $Q_{R,\ell}(z_0)$ is denoted by
 \begin{align*}
     u_{Q_{R,\ell}(z_0)}=\fiint_{Q_{R,\ell}(z_0)}u\,dz.
 \end{align*}

\begin{lemma}\label{sec3:lem:1}
	Let $u$ be a weak solution to \eqref{sec1:1}. Then there exists a constant $c=c(n,p,q,\nu,L)$ such that
	\begin{align*}
		\begin{split}
			&\sup_{t\in (t_0-\tau,t_0+\tau)}\fint_{B_{r}(x_0)}\frac{|u-u_{Q_{r,\tau}(z_0)}|^2}{\tau}\,dx+\fiint_{Q_{r,\tau}(z_0)}H(z,|\na u|)\,dz\\
			&\qquad\le c\fiint_{Q_{R,\ell}(z_0)}\left(\frac{|u-u_{Q_{R,\ell}(z_0)}|^p}{(R-r)^p}+a(z)\frac{|u-u_{Q_{R,\ell}(z_0)}|^q}{(R-r)^q}\right)\,dz\\
			&\qquad\qquad+c\fiint_{Q_{R,\ell}(z_0)}\frac{|u-u_{Q_{R,\ell}(z_0)}|^2}{\ell-\tau}\,dz+c\fiint_{Q_{R,\ell}(z_0)}H(z,|F|)\,dz
		\end{split}
	\end{align*}
 for every $Q_{R,\ell}(z_0)\subset\Omega_T$, with $R,\ell>0$, $r\in [R/2,R)$ and $\tau\in[\ell/2^2,\ell)$.
\end{lemma}

\begin{proof}
	Let $\eta\in C_0^\infty(B_{R}(x_0))$ be a cut-off function with
	\begin{align}\label{sec3:1}
		0\le \eta\le 1,\quad\eta\equiv1\text{ in } B_{r}(x_0)\quad\text{and}\quad\lVert \na \eta\rVert_{L^\infty}\le \frac{2}{R-r}.
	\end{align}
	For $\tau\in[\ell/2^2,\ell)$, let $h_0>0$ be sufficiently small so that there exists a cut-off function $\zeta\in C_0^\infty(I_{\ell-h_0}(t_0))$ with
	\begin{align}\label{sec3:3}
			0\le \zeta\le 1,\quad \zeta\equiv1\text{ in } I_{\tau}(t_0)\quad\text{and}\quad\lVert\pa_t\zeta\rVert_{L^\infty}\le \frac{3}{\ell-\tau}.
	\end{align}
Let $t_*\in I_{\tau}(t_0)$ and $\delta\in(0,h_0)$. We define $\zeta_\delta$ as
\begin{align}\label{sec3:4}
	\zeta_{\delta}(t)=
	\begin{cases}
		1,&t\in (-\infty,t_*-\delta),\\
		1-\frac{t-t_*+\delta}{\delta},&t\in[t_*-\delta,t_*],\\
		0,&t\in(t_*,\infty).
	\end{cases}
\end{align}

For $h\in(0,h_0)$, we consider \eqref{sec1:1} in terms of Steklov averages and obtain
\begin{equation}\label{sec3:2}
	\pa_t[u-u_{Q_{R,\ell}(z_0)}]_h-\dv [\mA(\cdot,\na u)]_h\
	=-\dv[|F|^{p-2}F+a|F|^{q-2}F]_h
\end{equation}
in $B_{R}(x_0)\times I_{\ell-h}(t_0)$.
Then we observe that
\begin{align*}
[u-u_{Q_{R,\ell}(z_0)}]_h\eta^q\zeta^2 \zeta_\delta\in
   	 W^{1,2}_0(I_{\ell-h};L^2(B_{R}(x_0),\RR^N))\cap L^q(I_{\ell-h};W_0^{1,q}(B_{R}(x_0),\RR^N)).
\end{align*}
By applying $\varphi=[u-u_{Q_{R,\ell}(z_0)}]_h\eta^q\zeta^2\zeta_{\delta}$ as a test function in \eqref{sec3:2}, we have
\begin{align}\label{sec3:6}
	\begin{split}
		\mathrm{I}+\mathrm{II}&=
		\fiint_{Q_{R,\ell}(z_0)}\pa_t[u-u_{Q_{R,\ell}}(z_0)]_h\cdot\varphi\,dz
		+\fiint_{Q_{R,\ell}(z_0)}[\mA(\cdot,\na u)]_h\cdot\na\varphi \,dz\\
		&=\fiint_{Q_{R,\ell}(z_0)}[|F|^{p-2}F+a|F|^{q-2}F]_h\cdot \na\varphi\,dz=\mathrm{III}.
	\end{split}
\end{align}
We show that $\mathrm{II}$ and $\mathrm{III}$ are finite under the assumption $|\na u|\in L^q(\Om_T)$. 
The structural assumptions and the properties of the Steklov average lead to
	\begin{align*}
			\mathrm{II}
			\le L\fiint_{Q_{R,\ell}(z_0)}|(|\nabla u|^{p-1})_h(x,t)||\nabla\varphi(x,t)|\, dx\ dt+L\fiint_{Q_{R,\ell}(z_0)}|(a|\nabla u|^{q-1})_h(x,t)| |\nabla\varphi(x,t)|\, dx\ dt.
	\end{align*}
     The first term on the right-hand side is finite as in the case of the parabolic $p$-Laplace systems. The second term on the right-hand side can be written as
	\begin{align*}
     \begin{split}
     &\fiint_{Q_{R,\ell}(z_0)}|(a|\nabla u|^{q-1})_h(x,t)| |\nabla\varphi(x,t)|\, dx\ dt\\
    & =\fiint_{Q_{R,\ell}(z_0)}\fint_t^{t+h}[a(x,s)]^\frac{q-1}{q}|\nabla u(x,s)|^{q-1}\  [a(x,s)]^\frac{1}{q}|\nabla\varphi(x,t)| \, ds \, dx\ dt.
      \end{split}
	\end{align*} 
	By H\"older's inequality and the properties of the Steklov average, there exists a constant $c=c(n)$ such that
	\begin{align*}
		\begin{split}
			&\fiint_{Q_{R,\ell}(z_0)}|(a|\nabla u|^{q-1})_h(x,t)| |\nabla\varphi(x,t)|\, dx\ dt\\
			&\le c\left(\fiint_{Q_{R,\ell}(z_0)}a(x,t)|\nabla u(x,t)|^{q}  \ dx\ dt\right)^\frac{q-1}{q}\left(\fiint_{Q_{R,\ell}(z_0)}\fint_t^{t+h}a(x,s)|\nabla \varphi(x,t)|^{q}\,ds\,dx\ dt\right)^\frac{1}{q}\\
            &= c\left(\fiint_{Q_{R,\ell}(z_0)}a(x,t)|\nabla u(x,t)|^{q}\, dx\ dt\right)^\frac{q-1}{q}\left(\fiint_{Q_{R,\ell}(z_0)}a_h(x,t)|\nabla \varphi(x,t)|^{q}\, dx\ dt\right)^\frac{1}{q}.
		\end{split}
	\end{align*} 
This shows that $\mathrm{II}$ is finite if $|\na u|\in L^q(\Om_T)$. A similar argument applies for $\mathrm{III}$.

\textbf{Estimate of $\mathrm{I}$:} Integration by parts gives
\begin{align}\label{sec3:7}
	\begin{split}
		\mathrm{I}
		&=\fiint_{Q_{R,\ell}(z_0)}\frac{1}{2}(\pa_t|[u-u_{Q_{R,\ell}(z_0)}]_h|^2)\eta^q\zeta^2\zeta_{\delta}\,dz\\
		&=-\fiint_{Q_{R,\ell}(z_0)}|[u-u_{Q_{R,\ell}(z_0)}]_h|^2\eta^q\zeta\zeta_{\delta}\pa_t\zeta\,dz\\
		&\qquad-\fiint_{Q_{R,\ell}(z_0)}\frac{1}{2}|[u-u_{Q_{R,\ell}(z_0)}]_h|^2\eta^q\zeta^2\pa_t\zeta_{\delta}\,dz.
	\end{split}
\end{align}
We estimate the first term on the right-hand side of \eqref{sec3:7} by \eqref{sec3:3} and obtain
\begin{align*}
		-\fiint_{Q_{R,\ell}(z_0)}|[u-u_{Q_{R,\ell}(z_0)}]_h|^2\eta^q\zeta\zeta_{\delta}\pa_t\zeta\,dz
		\ge-\fiint_{Q_{R,\ell}(z_0)}\frac{|[u-u_{Q_{R,\ell}(z_0)}]_h|^2}{\ell-\tau}\,dz.
\end{align*}
For the second term  on the right-hand side of \eqref{sec3:7}, by \eqref{sec3:4} we have
\begin{align*}
	\begin{split}
		&-\fiint_{Q_{R,\ell}(z_0)}\frac{1}{2}|[u-u_{Q_{R,\ell}(z_0)}]_h|^2\eta^q\zeta^2\pa_t\zeta_{\delta}\,dz\\
		&\qquad=\frac{1}{|Q_{R,\ell}|}\fint_{t_*-\delta}^{t_*}\int_{B_{R}(x_0)}\frac{1}{2}|[u-u_{Q_{R,\ell}(z_0)}]_h|^2\eta^q\zeta^2\,dx\,dt\\
		&\qquad\ge \frac{1}{|Q_{R,\ell}|}\fint_{t_*-\delta}^{t_*}\int_{B_{r}(x_0)}\frac{1}{2}|[u-u_{Q_{R,\ell}(z_0)}]_h|^2\,dx\,dt.
	\end{split}
\end{align*}
Thus, we get
\begin{align*}
	\begin{split}
		\lim_{h\to0^+}\lim_{\delta\to0^+}\mathrm{I}&\ge -\fiint_{Q_{R,\ell}(z_0)}\frac{|u-u_{Q_{R,\ell}(z_0)}|^2}{\ell-\tau}\,dz\\
		&\qquad+\frac{1}{2|Q_{R,\ell}|}\int_{B_r(x_0)}|u(x,t_*)-u_{Q_{R,\ell}(z_0)}|^2\,dx.
	\end{split}
\end{align*}

\textbf{Estimate of $\mathrm{II}$:} It holds that
\begin{align}\label{sec3:11}
	\begin{split}
		\mathrm{II}
		&=\fiint_{Q_{R,\ell}(z_0)}[\mA(\cdot,\na u)]_h\cdot[ \na u]_h\eta^q\zeta^2\zeta_\delta\,dz\\
		&\qquad+q\fiint_{Q_{R,\ell}(z_0)}[\mA(\cdot,\na u)]_h\cdot [u-u_{Q_{R,\ell}(z_0)}]_h\na\eta\eta^{q-1}\zeta^2\zeta_\delta\,dz.
	\end{split}
\end{align}
To estimate the first term in \eqref{sec3:11}, we apply \eqref{sec1:2} to get
\begin{align*}
	\begin{split}
			&\lim_{h\to0^+}\lim_{\delta\to0^+}\fiint_{Q_{R,\ell}(z_0)}[\mA(\cdot,\na u)]_h\cdot [\na u]_h\eta^q\zeta^2\zeta_\delta\,dz	\\
			&\qquad\ge \frac{\nu}{|Q_{R,\ell}|}\int_{I_{\ell}(t_0)\cap (-\infty,t_*)}\int_{B_{R}(x_0)}(|\na u|^p+a(z)|\na u|^q)\eta^q\zeta^2\,dx\,dt.
	\end{split}
\end{align*}
To estimate the second term in \eqref{sec3:11}, we use \eqref{sec1:2} and \eqref{sec3:1} to conclude that
\begin{align*}
	\begin{split}
		&\lim_{h\to0^+}\lim_{\delta\to0^+}q\fiint_{Q_{R,\ell}(z_0)}[\mA(\cdot,\na u)]_h\cdot [u-u_{Q_{R,\ell}(z_0)}]_h\na\eta\eta^{q-1}\zeta^2\zeta_\delta\,dz\\
		&\qquad\ge - \frac{Lq}{|Q_{R,\ell}|}\int_{I_{\ell}(t_0)\cap (-\infty,t_*)}\int_{B_{R}(x_0)}|\na u|^{p-1}\eta^{q-1}\zeta^2\frac{|u-u_{Q_{R,\ell}(z_0)}|}{R-r}\,dx\,dt\\
		&\qquad\qquad-\frac{Lq}{|Q_{R,\ell}|}\int_{I_{\ell}(t_0)\cap (-\infty,t_*)}\int_{B_{R}(x_0)}a(z)|\na u|^{q-1}\eta^{q-1}\zeta^2\frac{|u-u_{Q_{R,\ell}(z_0)}|}{R-r}\,dx\,dt.
	\end{split}
\end{align*}
By Young's inequality, there exists a constant $c=c(p,q,\nu,L)$ such that
\begin{align*}
	\begin{split}
		& \lim_{h\to0^+}\lim_{\delta\to0^+}q\fiint_{Q_{R,\ell}(z_0)}[\mA(z,\na u)]_h\cdot [u-u_{Q_{R,\ell}(z_0)}]_h\na\eta\eta^{q-1}\zeta^2\zeta_\delta\,dz\\
		&\qquad\ge-\frac{\nu}{4|Q_{R,\ell}|}\int_{I_{\ell}(t_0)\cap (-\infty,t_*)}\int_{B_{R}(x_0)}(|\na u|^p+a(z)|\na u|^q)\eta^q\zeta^2\,dx\,dt\\
		&\qquad\qquad-c\fiint_{Q_{R,\ell}(z_0)}\left(\frac{|u-u_{Q_{R,\ell}(z_0)}|^p}{(R-r)^p}+a(z)\frac{|u-u_{Q_{R,\ell}(z_0)}|^q}{(R-r)^q}\right)\,dz.
	\end{split}
\end{align*}
It follows that
\begin{align*}
	\begin{split}
		\lim_{h\to0^+}\lim_{\delta\to0^+}\mathrm{II}&\ge \frac{3\nu}{4|Q_{R,\ell}|}\int_{I_{\ell}(t_0)\cap (-\infty,t_*)}\int_{B_{R}(x_0)}(|\na u|^p+a(z)|\na u|^q)\eta^q\zeta^2\,dx\,dt\\
		&\qquad-c\fiint_{Q_{R,\ell}(z_0)}\left(\frac{|u-u_{Q_{R,\ell}(z_0)}|^p}{(R-r)^p}+a(z)\frac{|u-u_{Q_{R,\ell}(z_0)}|^q}{(R-r)^q}\right)\,dz.
	\end{split}
\end{align*}

\textbf{Estimate of $\mathrm{III}$:} We apply Young's inequality as above and obtain
\begin{align*}
	\begin{split}
		&\lim_{h\to0^+}\lim_{\delta\to0^+}\mathrm{III}
		\le c\fiint_{Q_{R,\ell}(z_0)}|F|^p+a(z)|F|^q\,dz\\
		&\qquad+\frac{\nu}{2|Q_{R,\ell}|}\int_{I_{\ell}(t_0)\cap (-\infty,t_*)}\int_{B_{R}(x_0)}(|\na u|^p+a(z)|\na u|^q)\eta^q\zeta^2\,dx\,dt\\
		&\qquad+c\fiint_{Q_{R,\ell}(z_0)}\left(\frac{|u-u_{Q_{R,\ell}(z_0)}|^p}{(R-r)^p}+a(z)\frac{|u-u_{Q_{R,\ell}(z_0)}|^q}{(R-r)^q}\right)\,dz.
	\end{split}
\end{align*}
By applying the estimates above in \eqref{sec3:6}, we obtain
\begin{align*}
	\begin{split}
		&\frac{1}{|Q_{R,\ell}|}\int_{B_r(x_0)}|u(x,t_*)-u_{Q_{R,\ell}(z_0)}|^2\,dx\\
		&\qquad\qquad+\frac{1}{|Q_{R,\ell}|}\int_{I_{\ell}(t_0)\cap (-\infty,t_*)}\int_{B_{R}(x_0)}(|\na u|^p+a(z)|\na u|^q)\eta^q\zeta^2\,dx\,dt\\
		&\qquad\le c\fiint_{Q_{R,\ell}(z_0)}\left(\frac{|u-u_{Q_{R,\ell}(z_0)}|^p}{(R-r)^p}+a(z)\frac{|u-u_{Q_{R,\ell}(z_0)}|^q}{(R-r)^q}\right)\,dz\\		
		&\qquad\qquad+c\fiint_{Q_{R,\ell}(z_0)}\frac{|u-u_{Q_{R,\ell}(z_0)}|^2}{\ell-\tau}\,dz+c\fiint_{Q_{R,\ell}(z_0)}(|F|^p+a(z)|F|^q)\,dz.
	\end{split}
\end{align*}
Since $t_*\in I_{\ell}(t_0)$ is arbitrary, $|B_{R}|\approx c(n)|B_{r}|$ and $|I_{\ell}|\approx |I_{\tau}|$, we get
\begin{align*}
	\begin{split}
		&\sup_{t\in (t_0-\tau,t_0+\tau)}\fint_{B_{r}(x_0)}\frac{|u-u_{Q_{R,\ell}(z_0)}|^2}{\tau}\,dx+\fiint_{Q_{r,\tau}(z_0)}(|\na u|^p+a(z)|\na u|^q)\,dz\\
		&\qquad\le c\fiint_{Q_{R,\ell}(z_0)}\left(\frac{|u-u_{Q_{R,\ell}(z_0)}|^p}{(R-r)^p}+a(z)\frac{|u-u_{Q_{R,\ell}(z_0)}|^q}{(R-r)^q}\right)\,dz\\
		&\qquad\qquad+c\fiint_{Q_{R,\ell}(z_0)}\frac{|u-u_{Q_{R,\ell}(z_0)}|^2}{\ell-\tau}\,dz+c\fiint_{Q_{R,\ell}(z_0)}(|F|^p+a(z)|F|^q)\,dz.
	\end{split}
\end{align*}
\end{proof}

The second lemma is a gluing lemma, which enables us to estimate integral averages over time-slices.
The spatial integral average of $u$ over $B_R(x_0)$ is denoted by
 \begin{align*}
    u_{B_{R}(x_0)}=u_{B_{R}(x_0)}(t)=\fint_{B_{R}(x_0)}u(x,t)\,dx.
 \end{align*}

\begin{lemma}\label{sec3:lem:2}
	Let $u$ be a weak solution to \eqref{sec1:1} and let $\eta\in C_0^\infty(B_{R}(x_0))$ be a function such that
	\begin{align}\label{sec3:34}
		\eta\ge0,\quad\fint_{B_{R}(x_0)}\eta\,dx=1\quad\text{and}\quad \lVert \eta\rVert_{L^\infty}+R\lVert\na \eta\rVert_{L^\infty}\le c,
	\end{align}
 where $c=c(n)$.
	Then there exists a constant $c=c(n,L)$ such that
	\begin{align*}
		\begin{split}
			&\sup_{t_1,t_2\in (t_0-\ell,t_0+\ell)}|(u\eta)_{B_{R}(x_0)}(t_2)-(u\eta)_{B_{R}(x_0)}(t_1)|\\
			&\qquad\le c\frac{\ell}{R}\fiint_{Q_{R,\ell}(z_0)}(|\na u|^{p-1}+a(z)|\na u|^{q-1})\,dz
			+c\frac{\ell}{R}\fiint_{Q_{R,\ell}(z_0)}(|F|^{p-1}+a(z)|F|^{q-1})\,dz
		\end{split}
	\end{align*}
  for every $Q_{R,\ell}(z_0)=B_{R}(x_0)\times (t_0-\ell,t_0+\ell)\subset\Omega_T$ with $R,\ell>0$.
 \end{lemma}

\begin{proof}
	Let $t_1,t_2\in (t_0-\ell,t_0+\ell)$ with $t_1<t_2$. For $\delta\in(0,1)$ small enough, we define $\zeta_\delta\in W_0^{1,\infty}(t_0-\ell,t_0+\ell)$ by
	\begin{align*}
		\zeta_\delta(t)=
		\begin{cases}
			\qquad 0,&t_0-\ell\le t\le t_1-\delta,\\
			\frac{t-t_1+\delta}{\delta},&t_1-\delta<t<t_1,\\
			\qquad1,&t_1\le t\le t_2,\\
			\frac{t_2+\delta-t}{\delta},&t_2<t<t_2+\delta,\\
			\qquad0,&t_2+\delta\le t\le t_0+\ell.
		\end{cases}
	\end{align*}
	By applying $\eta\zeta_\delta\in W_0^{1,\infty}(Q_{R,\ell}(z_0))$ as a test function in \eqref{sec1:1}, we obtain
	\begin{align*}
		\begin{split}
			&\fint_{t_1-\delta}^{t_1}\fint_{B_{R}(x_0)} u\eta\,dx\,dt-\fint_{t_2+\delta}^{t_2}\fint_{B_{R}(x_0)} u\eta\,dx\,dt\\
			&\qquad\le L\int_{t_1-\delta}^{t_2+\delta}\fint_{B_{R}(x_0)} (|\na u|^{p-1}+a(z)|\na u|^{q-1})|\na \eta||\zeta_\delta|\,dx\,dt\\
			&\qquad\qquad+\int_{t_1-\delta}^{t_2+\delta}\fint_{B_{R}(x_0)}(|F|^{p-1}+a(z)|F|^{q-1})|\na \eta||\zeta_\delta|\,dz.
		\end{split}
	\end{align*}
	Letting $\delta\longrightarrow0^+$ and using the third condition in \eqref{sec3:34}, we obtain
	\begin{align*}
		\begin{split}
			&|(u\eta)_{B_{R}(x_0)}(t_1)-(u\eta)_{B_{R}(x_0)}(t_2)|
			\le c\frac{\ell}{R}\fiint_{Q_{R,\ell}(z_0)}(|\na u|^{p-1}+a(z)|\na u|^{q-1})\,dz\\
			&\qquad\qquad\qquad+c\frac{\ell}{R}\fiint_{Q_{R,\ell}(z_0)}(|F|^{p-1}+a(z)|F|^{q-1})\,dz,
		\end{split}
	\end{align*}
 where $c=c(n,L)$.
	This completes the proof.
\end{proof}

Then we consider a parabolic Poincar\'e inequality.
\begin{lemma}\label{sec3:lem:3}
	Let $u$ be a weak solution to \eqref{sec1:1}. Then  there exists a constant $c=c(n,N,m,L)$ such that
	\begin{align*}
		\begin{split}
			&\fiint_{Q_{R,\ell}(z_0)}\frac{|u-u_{Q_{R,\ell}(z_0)}|^{\theta m}}{R^{\theta m}}\,dz\le c\fiint_{Q_{R,\ell}(z_0)}|\na u|^{\theta m}\,dz\\
			&\qquad+c\left(\frac{\ell}{R^2}\fiint_{Q_{R,\ell}(z_0)}(|\na u|^{p-1}+a(z)|\na u|^{q-1}+|F|^{p-1}+a(z)|F|^{q-1})\,dz\right)^{\theta m}
		\end{split}
	\end{align*}
 for every $Q_{R,\ell}(z_0)=B_{R}(x_0)\times (t_0-\ell,t_0+\ell)\subset\Omega_T$ with $R,\ell>0$, $m\in(1,q]$ and $\theta\in(1/m,1]$,
\end{lemma}

\begin{proof}
	The triangle inequality gives
	\begin{align*}
		\begin{split}
			&\fiint_{Q_{R,\ell}(z_0)}\frac{|u-u_{Q_{R,\ell}(z_0)}|^{\theta m}}{R^{\theta m}}\,dz
			\le c\fiint_{Q_{R,\ell}(z_0)}\frac{|u-u_{B_{R}(x_0)}(t)|^{\theta m}}{R^{\theta m}}\,dz\\
			&\qquad+c\fint_{I_{\ell}(t_0)}\frac{|u_{Q_{R,\ell}(z_0)}-u_{B_{R}(x_0)}(t)|^{\theta m}}{R^{\theta m}}\,dt,
		\end{split}
	\end{align*}
	where $c=c(m)$. By applying the Poincar\'e inequality in the spatial direction, we have
	\begin{align*}
		\begin{split}
			&\fiint_{Q_{R,\ell}(z_0)}\frac{|u-u_{Q_{R,\ell}(z_0)}|^{\theta m}}{R^{\theta m}}\,dz \le c\fiint_{Q_{R,\ell}(z_0)}|\na u|^{\theta m}\,dz\\
			&\qquad+c\fint_{I_{\ell}(t_0)}\frac{|u_{Q_{R,\ell}(z_0)}-u_{B_{R}(x_0)}(t)|^{\theta m}}{R^{\theta m}}\,dt,
		\end{split}
	\end{align*}
where $c=c(n,N,m)$.

	To complete the proof, we estimate the second term on the right-hand side in the estimate above. By H\"older's inequality, we have
	\begin{align*}
			\fint_{I_{\ell}(t_0)}|u_{Q_{R,\ell}(z_0)}-u_{B_{R}(x_0)}(t)|^{\theta m}\,dt
			\le \fint_{I_{\ell}(t_0)}\fint_{I_{\ell}(t_0)}|u_{B_{R}(x_0)}(s)-u_{B_{R}(x_0)}(t)|^{\theta m}\,dt \,ds.
	\end{align*}
	For $\eta\in C_0^\infty(B_R(x_0))$ satisfying \eqref{sec3:34}, it holds that
	\begin{align*}
		\begin{split}
			&\fint_{I_{\ell}(t_0)}\fint_{I_{\ell}(t_0)}|u_{B_{R}(x_0)}(s)-u_{B_{R}(x_0)}(t)|^{\theta m}\,dt \,ds\\
			&\qquad\le c\fint_{I_{\ell}(t_0)}|(u\eta)_{B_{R}(x_0)}(t)-u_{B_{R}(x_0)}(t)|^{\theta m}\,dt\\
			&\qquad\qquad+c\sup_{t,s\in I_{\ell}(t_0)}|(u\eta)_{B_{R}(x_0)}(t)-(u\eta)_{B_{R}(x_0)}(s)|^{\theta m},
		\end{split}
	\end{align*}
	where the second term on the right-hand side can be estimated by Lemma~\ref{sec3:lem:2}. For the first term on the right-hand side we may apply \eqref{sec3:34} and obtain
	\begin{align*}
		\begin{split}
			&\fint_{I_{\ell}(t_0)}|(u\eta)_{B_{R}(x_0)}(t)-u_{B_{R}(x_0)}(t)|^{\theta m}\,dt\\
			&\qquad=\fint_{I_{\ell}(t_0)}\left|\fint_{B_{R}(x_0)} (u(x,t)-u_{B_{R}(x_0)}(t))\eta(x)\,dx\right|^{\theta m}\,dt\\
			&\qquad\le c \fint_{I_{\ell}(t_0)}\left(\fint_{B_{R}(x_0)} |u(x,t)-u_{B_{R}(x_0)}(t)|\,dx\right)^{\theta m}\,dt.
		\end{split}
	\end{align*}
	Therefore, using the Poincar\'e inequality in the spatial direction and H\"older's inequality, we have
	\begin{align*}
		\fint_{I_{\ell}(t_0)}|(u\eta)_{B_{R}(x_0)}(t)-u_{B_{R}(x_0)}(t)|^{\theta m}\,dt\le cR^{\theta m}\fiint_{Q_{R,\ell}(z_0)}|\na u|^{\theta m}\,dz.
	\end{align*}
	This completes the proof.
\end{proof}

\section{Parabolic Sobolev-Poincar\'e inequalities}
This section provides a parabolic Sobolev-Poincar\'e  inequality by adapting techniques in \cite{MR1749438} to the double-phase case. 
Throughout this section, let $z_0=(x_0,t_0)\in\Om_T$, with $x_0\in\Omega$ and $t_0\in(0,T)$, be a Lebesgue point of $|\na u(z)|^p+a(z)|\na u(z)|^q$ satisfying
\begin{align}\label{sec4:1}
	|\na u(z_0)|^p+a(z_0)|\na u(z_0)|^q>\La
\end{align}
for some 
$\La>1+\|a\|_{L^\infty(\Omega_T)}$.
Recall that $H(z,s):\Omega_T\times \RR^{+ }\longrightarrow \RR^+$,
$H(z,s)=s^p+a(z)s^q$.
For a fixed point $z_0$, we denote
\begin{align}\label{sec2:1}
	H_{z_0}(s)=s^p+a(z_0)s^q.
\end{align}
Note that $H_{z_0}(s)$ is strictly increasing and continuous with
\begin{align*}
	\lim_{s\to0^+}H_{z_0}(s)=0
	\quad\text{and}\quad
	\lim_{s\to\infty}H_{z_0}(s)=\infty.
\end{align*} 
By the intermediate value theorem for continuous functions, there exists $\la=\la(z_0)>1$ such that
\begin{align}\label{sec4:2}
	\La=\la^p+a(z_0)\la^q=H_{z_0}(\la).
\end{align}
Let
\begin{align*}
 M_1=\frac{1}{2|B_1|}\iint_{\Omega_T}\left(H(z,|\na u|)+H(z,|F|)\right)\,dz.
\end{align*}
The parameter $K=K(n,\alpha,[a]_{\alpha},M_1)>1$ will be determined later in \eqref{sec6:1}. 

The $p$-intrinsic cylinders and the $(p,q)$-intrinsic cylinders are considered separately in the argument.
In the $p$-intrinsic case we assume that
\begin{equation}\label{sec4:3}
		K\la^{p}\ge a(z_0)\la^q 
		\quad\text{and}\quad
		\fiint_{Q_{4\rho}^\la(z_0)}\left(H(z,|\na u|)+H(z,|F|)\right)\,dz< \la^p,
\end{equation}
where
\begin{equation}\label{sec4:31}
	Q_{4\rho}^\la(z_0)=
	B_{4\rho}(x_0)\times I_{4\rho}^\la(t_0),
 \quad I_{4\rho}^\la(t_0)=(t_0-\la^{2-p}(4\rho)^2,t_0+\la^{2-p}(4\rho)^2),
 \end{equation}
is a $p$-intrinsic cylinder.
In the $(p,q)$-intrinsic case we assume that
\begin{equation}\label{sec4:4}
	\begin{gathered}
		K\la^{p}\le a(z_0)\la^q,\quad
		\frac{a(z_0)}{2}\le a(z)\le 2a(z_0)\text{ for every } z\in G_{4\rho}^\la(z_0)
		\quad\text{and}\quad\\
		\fiint_{G_{4\rho}^\la(z_0)}\left(H(z,|\na u|)+H(z,|F|)\right)\,dz<H_{z_0}(\la),
	\end{gathered}
\end{equation}
where 
\begin{equation}\label{sec4:32}
G_{4\rho}^\la(z_0)=B_{4\rho}(x_0)\times J_{4\rho}^\la(t_0),\quad J_{4\rho}^\la(t_0)=\left(t_0-\tfrac{\la^2}{H_{z_0}(\la)}(4\rho)^2,t_0+\tfrac{\la^2}{H_{z_0}(\la)}(4\rho)^2\right),
\end{equation}
is a $(p,q)$-intrinsic cylinder.

\subsection{The $p$-intrinsic case}
In this case we consider estimates in $p$-intrinsic cylinders as in \eqref{sec4:31} and assume that \eqref{sec4:3} holds.
We begin by estimating the last term in Lemma~\ref{sec3:lem:3}.
\begin{lemma}\label{sec4:lem:1}
	Let $u$ be a weak solution to \eqref{sec1:1}. Then, for $s\in[2\rho,4\rho]$ and $\theta\in((q-1)/p,1]$, 
	there exists a constant $c=c(n,p,q,\alpha,L,[a]_{\alpha},M_1)$ such that
	\begin{align*}
		\begin{split}
			&\fiint_{Q_{s}^\la(z_0)}(|\na u|^{p-1}+a(z)|\na u|^{q-1}+|F|^{p-1}+a(z)|F|^{q-1})\,dz\\
			&\qquad\le c\fiint_{Q_{s}^\la(z_0)}(|\na u|+|F|)^{p-1}\,dz
			+c\la^{-1+\frac{p}{q}}\fiint_{Q_{s}^\la(z_0)}a(z)^\frac{q-1}{q}(|\na u|+|F|)^{q-1}\,dz\\
		&\qquad\qquad+c\la^{\frac{\alpha p}{n+2}}\left(\fiint_{Q_{s}^\la(z_0)}(|\na u|+|F|)^{\theta p}\,dz\right)^{\frac{1}{\theta}\left(\frac{p-1}{p}-\frac{\alpha}{n+2}\right)},
		\end{split}
	\end{align*}
 whenever $Q_{4\rho}^{\la}(z_0)\subset\Omega_T$ satisfies \eqref{sec4:3}.
\end{lemma}

\begin{proof}
	It follows from \eqref{sec1:4} that $q-1<p$. By \eqref{sec1:4} there exists a  constant $c=c([a]_{\alpha})$ such that
	\begin{equation}\label{sec4:41}
		\begin{split}
			&\fiint_{Q_{s}^\la(z_0)}(|\na u|^{p-1}+a(z)|\na u|^{q-1}+|F|^{p-1}+a(z)|F|^{q-1})\,dz\\
			&\qquad\le c\fiint_{Q_{s}^\la(z_0)}(|\na u|^{p-1}+|F|^{p-1})\,dz+c\fiint_{Q_{s}^\la(z_0)}\inf_{w\in Q_{s}^\la(z_0)}a(w)(|\na u|+|F|)^{q-1}\,dz\\
			&\qquad\qquad+cs^\alpha\fiint_{Q_{s}^\la(z_0)}(|\na u|^{q-1}+|F|^{q-1})\,dz.
		\end{split}
	\end{equation}
	We apply the first condition in \eqref{sec4:3} to estimate the second term on the right-hand side of \eqref{sec4:41} and obtain
	\begin{align*}
		\begin{split}
			&\fiint_{Q_{s}^\la(z_0)}\inf_{w\in Q_{s}^\la(z_0)}a(w)(|\na u|+|F|)^{q-1}\,dz\\
			&\qquad\le K^\frac{1}{q}\la^{-1+\frac{p}{q}}\fiint_{Q_{s}^\la(z_0)}\inf_{w\in Q_{s}^\la(z_0)}a(w)^\frac{q-1}{q}(|\na u|+|F|)^{q-1}\,dz\\
			&\qquad\le K^\frac{1}{q}\la^{-1+\frac{p}{q}}\fiint_{Q_{s}^\la(z_0)}a(z)^\frac{q-1}{q}(|\na u|+|F|)^{q-1}\,dz.
		\end{split}
	\end{align*}
	In order to estimate the last term on the right-hand side of \eqref{sec4:41}, we recall that $|Q_{s}^\la(z_0)|=c(n)s^{n+2}\la^{2-p}$. H\"older's inequality gives
	\begin{align*}
		\begin{split}
			&s^\alpha\fiint_{Q_{s}^\la(z_0)}|\na u|^{q-1}\,dz\le s^\alpha\left(\fiint_{Q_{s}^\la(z_0)}|\na u|^{\theta p}\,dz\right)^{\frac{1}{\theta}\frac{q-1}{p}}\\
			&\qquad\le s^\alpha\left(\fiint_{Q_{s}^\la(z_0)}|\na u|^{p}\,dz\right)^{\frac{\gamma}{p}}\left(\fiint_{Q_{s}^\la(z_0)}|\na u|^{\theta p}\,dz\right)^{\frac{1}{\theta}\frac{q-1-\gamma}{p}}\\
			&\qquad\le c s^{\alpha-\frac{(n+2)\gamma}{p}}\left(\iint_{Q_{s}^\la(z_0)}|\na u|^p\,dz\right)^\frac{\gamma}{p}\la^{\frac{(p-2)\gamma}{p}}\left(\fiint_{Q_{s}^\la(z_0)}|\na u|^{\theta p}\,dz\right)^{\frac{1}{\theta}\frac{q-1-\gamma}{p}},
		\end{split}
	\end{align*}
	where $c=c(n)$, $\gamma=\alpha p/(n+2)$ and $\theta\in((q-1)/p,1]$.
	We have $\gamma\in(0,p-1)$, since
	\begin{align*}
		 1<\frac{2(n+1)}{n+2}\Longrightarrow
		 1<\frac{(n+1)p}{n+2}\Longrightarrow
		 \gamma=\frac{\alpha p}{n+2}<p-1.
 \end{align*}
	It follows from the second condition in \eqref{sec4:3}, $\la\ge1$ and the first condition in \eqref{sec1:4} that
	\begin{align*}
		\left(\fiint_{Q_s^\la(z_0)}|\na u|^{\theta p}\,dz\right)^{\frac{1}{\theta}\frac{q-1-\gamma}{p}}
		&\le c\la^{q-p}\left(\fiint_{Q_s^\la(z_0)}|\na u|^{\theta p}\,dz\right)^{\frac{1}{\theta}\frac{p-1-\gamma}{p}}\\
		&\le c\la^{\frac{2\alpha}{n+2}}\left(\fiint_{Q_s^\la(z_0)}|\na u|^{\theta p}\,dz\right)^{\frac{1}{\theta}\frac{p-1-\gamma}{p}},
	\end{align*}
	where $c=c(n,p,q,\alpha)$. Therefore, we obtain
 \begin{align*}
     s^\alpha\fiint_{Q_{s}^\la(z_0)}|\na u|^{q-1}\,dz\le c\la^{\frac{\alpha p}{n+2}}\left(\fiint_{Q_s^\la(z_0)}|\na u|^{\theta p}\,dz\right)^{\frac{1}{\theta}\left(\frac{p-1}{p}-\frac{\alpha}{n+2}\right)}
 \end{align*}
where $c=c(n,p,q,\alpha,M_1)$. Similarly, replacing $|\na u|$ by $|F|$ in the above argument, we have
\begin{align*}
    s^\alpha\fiint_{Q_{s}^\la(z_0)}|F|^{q-1}\,dz\le c\la^{\frac{\alpha p}{n+2}}\left(\fiint_{Q_s^\la(z_0)}|F|^{\theta p}\,dz\right)^{\frac{1}{\theta}\left(\frac{p-1}{p}-\frac{\alpha}{n+2}\right)}
\end{align*}
This completes the proof.
\end{proof}
Next we provide a $p$-intrinsic parabolic Poincar\'e inequality.
\begin{lemma}\label{sec4:lem:2}
	Let $u$ be a weak solution to \eqref{sec1:1}. Then, for $s\in[2\rho,4\rho]$ and $\theta\in((q-1)/p,1]$, 
	there exists a constant $c=c(n,N,p,q,\alpha,L,[a]_{\alpha},M_1)$ such that 
	\begin{equation*}
		\begin{split}
			&\fiint_{Q_{s}^\la(z_0)}\frac{|u-u_{Q_{s}^\la(z_0)}|^{\theta p}}{s^{\theta p}}\,dz
			\le c\fiint_{Q_{s}^\la(z_0)}H(z,|\na u|)^\theta\,dz\\
			&\qquad+c\la^{\left(2-p+\frac{\alpha p}{n+2}\right)\theta p}\left(\fiint_{Q_{s}^\la(z_0)}(|\na u|+|F|)^{\theta p}\,dz\right)^{p-1-\frac{\alpha p}{n+2}}
			+c\left(\fiint_{Q_{s}^\la(z_0)}H(z,|F|)\,dz\right)^{\theta},
		\end{split}
	\end{equation*}
whenever $Q_{4\rho}^{\la}(z_0)\subset\Omega_T$ satisfies \eqref{sec4:3}.
\end{lemma}

\begin{proof}
	By Lemma~\ref{sec3:lem:3} and Lemma~\ref{sec4:lem:1}, there exists a constant $c=c(n,N,p,q,\alpha,L,[a]_{\alpha},M_1)$ such that
	\begin{equation}\label{sec4:42}
		\begin{split}
			&\fiint_{Q_{s}^\la(z_0)}\frac{|u-u_{Q_{s}^\la(z_0)}|^{\theta p}}{s^{\theta p}}\,dz
			\le c\fiint_{Q_{s}^\la(z_0)}|\na u|^{\theta p}\,dz\\
			&\qquad+c\left(\la^{2-p}\fiint_{Q_{s}^\la(z_0)}(|\na u|+|F|)^{p-1}\,dz\right)^{\theta p}\\
			&\qquad+c\left(\la^{1-p+\frac{p}{q}}\fiint_{Q_{s}^\la(z_0)}a(z)^\frac{q-1}{q}(|\na u|+|F|)^{q-1}\,dz\right)^{\theta p}\\
			&\qquad+c\la^{\left(2-p+\frac{\alpha p}{n+2}\right)\theta p}\left(\fiint_{Q_{s}^\la(z_0)}(|\na u|+|F|)^{\theta p}\,dz\right)^{p-1-\frac{\alpha p}{n+2}}.
		\end{split}
	\end{equation}
	To estimate the second term on the right-hand side of \eqref{sec4:42}, we use the second condition in \eqref{sec4:3} and obtain
	\begin{align*}
		\la^{(2-p)\theta p}\left(\fiint_{Q_{s}^\la(z_0)}(|\na u|+|F|)^{\theta p}\,dz\right)^{p-1}\le c\fiint_{Q_{s}^\la(z_0)}(|\na u|+|F|)^{\theta p}\,dz,
	\end{align*}
where $c=c(n,p)$.
Similarly, the third term on the right-hand side of \eqref{sec4:42} is estimated as 
\begin{align*}
		\la^{\left(1-p+\frac{p}{q}\right)\theta p}\left(\fiint_{Q_{s}^\la(z_0)}a(z)^{\theta }(|\na u|+|F|)^{\theta q}\,dz\right)^\frac{p(q-1)}{q}
		\le c\fiint_{Q_{s}^\la(z_0)}a(z)^{\theta }(|\na u|+|F|)^{\theta q}\,dz,
\end{align*}
where $c=c(n,p,q)$.
The conclusion follows from H\"older's inequality.
\end{proof}

\begin{lemma}\label{sec4:lem:3}
	Let $u$ be a weak solution to \eqref{sec1:1}. Then for $Q_{4\rho}^{\la}(z_0)\subset\Omega_T$ satisfying \eqref{sec4:3}, $s\in[2\rho,4\rho]$ and $\theta\in((q-1)/p,1]$, 
	there exists a constant $c=c(n,N,p,q,\alpha,L,[a]_{\alpha},M_1)$ such that 
	\begin{equation*}
		\begin{split}
			&\fiint_{Q_{s}^\la(z_0)}\inf_{w\in Q_{s}^\la(z_0)}a(w)^{\theta}\frac{|u-u_{Q_{s}^\la(z_0)}|^{\theta q}}{s^{\theta q}}\,dz
			\le c\fiint_{Q_{s}^\la(z_0)}H(z,|\na u|)^\theta\,dz\\
			&\qquad+c\la^{\left(2-p+\frac{\alpha p}{n+2}\right)\theta p}\left(\fiint_{Q_{s}^\la(z_0)}(|\na u|+|F|)^{\theta p}\,dz\right)^{p-1-\frac{\alpha p}{n+2}}
			+c\left(\fiint_{Q_{s}^\la(z_0)}H(z,|F|)\,dz\right)^{\theta}.
		\end{split}
	\end{equation*}
	
\end{lemma}

\begin{proof}
		By Lemma~\ref{sec3:lem:3} and Lemma~\ref{sec4:lem:1}, there exists a constant $c=c(n,N,p,q,\alpha,L,[a]_{\alpha},M_1)$ such that
	\begin{equation}\label{sec4:43}
		\begin{split}
			&\fiint_{Q_{s}^\la(z_0)}\inf_{w\in Q_{s}^{\la}(z_0)}a(w)^{\theta}\frac{|u-u_{Q_{s}^\la(z_0)}|^{\theta q}}{s^{\theta q}}\,dz 
			\le c\fiint_{Q_{s}^\la(z_0)}\inf_{w\in Q_{s}^\la(z_0)}a(w)^{\theta}|\na u|^{\theta q}\,dz\\
			&\qquad+ c\inf_{w\in Q_{s}^{\la}(z_0)}a(w)^{\theta}\left(\la^{2-p}\fiint_{Q_{s}^\la(z_0)}(|\na u|+|F|)^{p-1}\,dz\right)^{\theta q}\\
			&\qquad+c\inf_{w\in Q_{s}^{\la}(z_0)}a(w)^{\theta}\left(\la^{1-p+\frac{p}{q}}\fiint_{Q_{s}^\la(z_0)}a(z)^\frac{q-1}{q}(|\na u|+|F|)^{q-1}\,dz\right)^{\theta q}\\
			&\qquad+c\inf_{w\in Q_{s}^\la(z_0)}a(w)^{\theta}\la^{\left(2-p+\frac{\alpha p}{n+2}\right)\theta q}\left(\fiint_{Q_{s}^\la(z_0)}(|\na u|+|F|)^{\theta p}\,dz\right)^{q\left(\frac{p-1}{p}-\frac{\alpha}{n+2}\right)}.
		\end{split}
	\end{equation}
	By \eqref{sec4:3} for the second term on the right-hand side of \eqref{sec4:43}, we obtain
\begin{align*}
	\begin{split}
		&\inf_{w\in Q_{s}^\la(z_0)}a(w)^{\theta}\la^{(2-p)\theta q}\left(\fiint_{Q_{s}^\la(z_0)}(|\na u|+|F|)^{\theta p}\,dz\right)^{\frac{(p-1)q}{p}}\\
		&\qquad\le K^\theta\la^{(p-q)\theta}\la^{(2-p)\theta q+(p-1)q\theta-\theta p}\fiint_{Q_{s}^\la(z_0)}(|\na u|+|F|)^{\theta p}\,dz\\
		&\qquad\le c(n,p,\alpha,[a]_{\alpha},M_1)\fiint_{Q_{s}^\la(z_0)}(|\na u|+|F|)^{\theta p}\,dz.
	\end{split}
\end{align*}
Similarly, the third and the fourth terms on the right-hand side  of \eqref{sec4:43} can be estimated as
\begin{equation*}
	\begin{split}
		&\inf_{w\in Q_{s}^\la(z_0)}a(w)^{\theta}\la^{(p+q-pq)\theta }\left(\fiint_{Q_{s}^\la(z_0)}(a(z)(|\na u|^q+|F|^q))^{\theta}\,dz\right)^{q-1}\\
		&\qquad\le c\fiint_{Q_{s}^\la(z_0)}(a(z)(|\na u|^q+|F|^q))^{\theta}\,dz
	\end{split}
\end{equation*}
and
\begin{equation*}
	\begin{split}
		&\inf_{w\in Q_{s}^\la(z_0)}a(w)^{\theta}\la^{\left(2-p+\frac{\alpha p}{n+2}\right)\theta q}\left(\fiint_{Q_{s}^\la(z_0)}(|\na u|+|F|)^{\theta p}\,dz\right)^{q\left(\frac{p-1}{p}-\frac{\alpha }{n+2}\right)}\\
		&\qquad\le c\la^{\left(2-p+\frac{\alpha p}{n+2}\right)\theta p}\left(\fiint_{Q_{s}^\la(z_0)}(|\na u|+|F|)^{\theta p}\,dz\right)^{p-1-\frac{\alpha p}{n+2}}.
	\end{split}
\end{equation*}
	The conclusion follows from H\"older's inequality.
\end{proof}

\subsection{The $(p,q)$-intrinsic case}
In this case we consider estimates in $(p,q)$-intrinsic cylinders as in \eqref{sec4:32} and assume that \eqref{sec4:4} holds.
The second and third conditions in \eqref{sec4:4} imply
\begin{align*}
	\fiint_{G_{4\rho}^{\la}(z_0)}\left(H_{z_0}(|\na u|)+H_{z_0}(|F|)\right)\,dz<4a(z_0)\la^q.
\end{align*}
It follows that
\begin{align}\label{sec4:5}
	\fiint_{G_{4\rho}^\la(z_0)}(|\na u|^q+|F|^q)\,dz<4\la^q.
\end{align}

Next we discuss a  $(p,q)$-intrinsic parabolic Poincar\'e inequality.
\begin{lemma}\label{sec4:lem:5}
	Let $u$ be a weak solution to \eqref{sec1:1}. Then, for $\theta\in((q-1)/p,1]$ and $s\in[2\rho,4\rho]$, 
	there exists a constant $c=c(n,N,p,q,L)$ such that 
	\begin{align*}
	 \fiint_{G_{s}^\la(z_0)}H^{\theta}_{z_0}\left(\frac{|u-u_{G_{s}^\la(z_0)}|}{s}\right)\,dz
			\le c\fiint_{G_{s}^\la(z_0)}H_{z_0}^{\theta}(|\na u|)\,dz
			+c\left(\fiint_{G_s^\la(z_0)}H_{z_0}(|F|)\,dz\right)^\theta,
	\end{align*}
  whenever $G_{4\rho}^{\la}(z_0)\subset\Omega_T$ satisfies \eqref{sec4:4}.
\end{lemma}

\begin{proof}
	Note that
	\begin{align*}
 &\fiint_{G_{s}^\la(z_0)}H^{\theta}_{z_0}\left(\frac{|u-u_{G_{s}^\la(z_0)}|}{s}\right)\,dz\\
			&\qquad\le 2\fiint_{G_{s}^\la(z_0)}\left(\frac{|u-u_{G_{s}^\la(z_0)}|^{\theta p}}{s^{\theta p}}+a(z_0)^\theta\frac{|u-u_{G_{s}^\la(z_0)}|^{\theta q}}{s^{\theta q}}\right)\,dz.
	\end{align*}
    By Lemma~\ref{sec3:lem:3}, there exists a constant $c=c(n,N,p,q,L)$ such that
	\begin{equation}\label{sec4:51}
		\begin{split}		&\fiint_{G_{s}^\la(z_0)}H^{\theta}_{z_0}\left(\frac{|u-u_{G_{s}^\la(z_0)}|}{s}\right)\,dz\\
			&\qquad\le c\fiint_{G_{s}^\la(z_0)}H^{\theta}_{z_0}(|\na u|)\,dz
			+c H_{z_0}^\theta\left(\frac{\la}{H'_{z_0}(\la)}\fiint_{G_{s}^\la(z_0)}H'_{z_0}(|\na u|+|F|)\,dz\right).
		\end{split}
	\end{equation}
 Since
\begin{align*}
	H_{z_0}'(s)=ps^{p-1}+qa(z_0)s^{q-1}
\end{align*}
for every $s>0$, we have 
\begin{align}\label{sec4:6}
	sH_{z_0}'(s)\le q H_{z_0}(s) \le \tfrac{q}{p} sH_{z_0}'(s)
\end{align}
for every $s>0$.
	We estimate the last term on the right-hand side of \eqref{sec4:51} by \eqref{sec4:6} and obtain
	\begin{align*}
		\begin{split}
			&\frac{\la}{H'_{z_0}(\la)}\fiint_{G_s^\la(z_0)}H'_{z_0}(|\na u|)\,dz
			\le \frac{c\la}{\la^{p-1}+a(z_0)\la^{q-1}}\fiint_{G_{s}^\la(z_0)}(|\na u|^{p-1}+a(z_0)|\na u|^{q-1})\,dz\\
			&\qquad\le c\la\left(\frac{1}{\la^{p-1}} \fiint_{G_{s}^\la(z_0)}|\na u|^{p-1}\,dz+\frac{1}{\la^{q-1}}\fiint_{G_{s}^\la(z_0)}|\na u|^{q-1}\,dz\right).
		\end{split}
	\end{align*}
    By the same argument as above for $H_{z_0}'(|\na u|+|F|)$, it follows from \eqref{sec4:5} that
    \begin{align*}
    	\frac{\la}{H'_{z_0}(\la)}\fiint_{G_s^\la(z_0)}H'_{z_0}(|\na u|+|F|)\,dz\le c\la^{2-p}\left(\fiint_{G_{s}^\la(z_0)}(|\na u|+|F|)^{q-1}\,dz\right)^\frac{p-1}{q-1},
    \end{align*}
    where $c=c(n,p,q)$. Therefore, we obtain
	\begin{equation}\label{sec4:52}
		\begin{split}
			&H_{z_0}^{\theta}\left(\frac{\la}{H'_{z_0}(\la)}\fiint_{G_s^\la(z_0)}H'_{z_0}(|\na u|+|F|)\,dz\right)\\
			&\qquad\le c\left(\la^{(2-p) p}\left(\fiint_{G_s^\la(z_0)}(|\na u|+|F|)^{q-1}\,dz\right)^\frac{p(p-1)}{q-1}\right)^{\theta }\\
			&\qquad\qquad+c\left(a(z_0)\la^{(2-p) q}\left(\fiint_{G_s^\la(z_0)}(|\na u|+|F|)^{q-1}\,dz\right)^\frac{q(p-1)}{q-1}\right)^{\theta }.
		\end{split}
	\end{equation}
	In order to estimate the first term on the right-hand side of \eqref{sec4:52}, we apply \eqref{sec4:5}. Keeping in mind that $q-1<p$ and $p\ge2$, we have 
	\begin{align*}
		\begin{split}
			&\la^{(2-p)p}\left(\fiint_{G_s^\la(z_0)}(|\na u|+|F|)^{q-1}\,dz\right)^\frac{p(p-1)}{q-1}
			\le\la^{(2-p)p}\left(\fiint_{G_s^\la(z_0)}(|\na u|^{p}+|F|^{ p})^{\theta}\,dz\right)^{\frac{1}{\theta}(p-1)}\\
			&\qquad\qquad\qquad\le c\left(\fiint_{G_s^\la(z_0)}|\na u|^{\theta p}\,dz\right)^\frac{1}{\theta}+c\fiint_{G_{s}^\la(z_0)}|F|^p\,dz,
		\end{split}
	\end{align*}
 for any $\theta\in((q-1)/p,1]$ with $c=c(n,p)$.
	We estimate the last term on the right-hand side of \eqref{sec4:52} in a similar way. Then for any $\theta\in((q-1)/q,1]$, we have
	\begin{align*}
		\begin{split}
			&a(z_0)\la^{(2-p)q}\left(\fiint_{G_s^\la(z_0)}(|\na u|+|F|)^{q-1}\,dz\right)^\frac{q(p-1)}{q-1}\\
			&\qquad\le a(z_0)\la^{(2-p)q}\left(\fiint_{G_s^\la(z_0)}(|\na u|+|F|)^{\theta q}\,dz\right)^{\frac{1}{\theta}(p-1)}\\
			&\qquad\le c\left(\fiint_{G_{s}^\la(z_0)}a(z_0)^\theta|\na u|^{\theta q}\,dz\right)^\frac{1}{\theta}+c\fiint_{G_{s}^\la(z_0)}a(z_0)|F|^q\,dz,
		\end{split}
	\end{align*}
 where $c=c(n,p)$.
	Hence, we conclude that
	\begin{align*}
     &H_{z_0}^{\theta}\left(\frac{\la}{H'_{z_0}(\la)}\fiint_{G_s^\la(z_0)}H'_{z_0}(|\na u|+|F|)\,dz\right)\\
			 &\qquad\le c\fiint_{G_s^\la(z_0)}H_{z_0}^{\theta }(|\na u|)\,dz
  +\left(\fiint_{G_{s}^\la(z_0)}H_{z_0}(|F|)\,dz\right)^\theta,
	\end{align*}
	which completes the proof.
\end{proof}
Note that by replacing $H_{z_0}^{\theta }(s)$ with $s^{\theta p}$ in the proof of Lemma~\ref{sec4:lem:5}, we will also have the following result. All necessary calculations are already contained in the proof of the previous lemma. 
\begin{lemma}\label{sec4:lem:6}
	Let $u$ be a weak solution to \eqref{sec1:1}. Then, for $\theta\in((q-1)/p,1]$ and $s\in[2\rho,4\rho]$, there exists a constant $c=c(n,N,p,q,L)$ such that 
	\begin{align*}
			\fiint_{G_{s}^\la(z_0)}\left(\frac{|u-u_{G_{s}^\la(z_0)}|}{s}\right)^{\theta p}\,dz
			\le c\fiint_{G_{s}^\la(z_0)}|\na u|^{\theta p}\,dz+c\left(\fiint_{G_s^\la(z_0)}|F|^p\,dz\right)^\theta,
	\end{align*}
whenever $G_{4\rho}^{\la}(z_0)\subset\Omega_T$ satisfies \eqref{sec4:4}.
\end{lemma}

\section{Reverse H\"older inequalities}\label{section5}

In this section, we assume that $z_0$, $\Lambda$ and $\la=\la(z_0)\ge1$ satisfy \eqref{sec4:1}-\eqref{sec4:2}. Let 
\[
K=1+40[a]_\alpha M_1^\frac{\alpha}{n+2}\quad\text{and}\quad\cv=10K.
\]
The distribution sets are denoted as
\begin{equation}\label{sec2:2}
\Psi(\La)=\{ z\in \Omega_T: H(z,|\na u(z)|)>\La\} 
\end{equation}
and
\begin{equation}\label{sec2:3}
\Phi(\La)=\{ z\in \Omega_T: H(z,|F|)>\La\}.
\end{equation}
We consider the $p$-intrinsic and $(p,q)$-intrinsic cases separately. In the first case we assume that
\begin{equation}\label{sec5:1}
	\begin{gathered}
		K\la^{p}\ge a(z_0)\la^q,\\
	     \fiint_{Q_{s}^\la(z_0)}\left(H(z,|\na u|)+H(z,|F|)\right)\,dz< \la^p\text{ for every } s\in(\rho,2\cv\rho]
      \text{ and }\\
		\fiint_{Q_{\rho}^\la(z_0)}\left(H(z,|\na u|)+H(z,|F|)\right)\,dz= \la^p,
	\end{gathered}
\end{equation}
where the $p$-intrinsic cylinder is defined in \eqref{sec4:31}.
In the second case we assume that
\begin{equation}\label{sec5:2}
	\begin{gathered}
		K\la^{p}\le a(z_0)\la^q,\quad \frac{a(z_0)}{2}\le a(z)\le 2a(z_0)\text{ for every } z\in G_{4\rho}^\la(z_0),\\
		\fiint_{G_{s}^\la(z_0)}\left(H(z,|\na u|)+H(z,|F|)\right)\,dz< H_{z_0}(\la)\text{ for every } s\in(\rho,2\cv\rho]
  \text{ and }\\
		\fiint_{G_{\rho}^\la(z_0)}\left(H(z,|\na u|)+H(z,|F|)\right)\,dz=H_{z_0}(\la),
	\end{gathered}
\end{equation}
where the $(p,q)$-intrinsic cylinder is defined in \eqref{sec4:32}.
We discuss reverse H\"older inequalities in both cases separately.

The following auxiliary lemmas will be employed in the argument. The first lemma is a Gagliardo-Nirenberg inequality and the second one is a standard iteration lemma, see \cite[Lemma 8.3]{MR1962933}.
\begin{lemma}\label{sec2:lem:1}
	Let $B_{\rho}(x_0)\subset\RR^n$, $\sig,s,r\in[1,\infty)$ and $\vartheta\in(0,1)$ such that 
	\begin{align*}
		-\frac{n}{\sig}\le \vartheta\left(1-\frac{n}{s}\right)-(1-\vartheta)\frac{n}{r}.
	\end{align*}
	Then there exists a constant $c=c(n,\sig)$ such that
	\begin{align*}
		\fint_{B_{\rho}(x_0)}\frac{|v|^\sig}{\rho^\sig}\,dx
		\le c\left(\fint_{B_{\rho}(x_0)}\left(\frac{|v|^s}{\rho^s}+|\na v|^s\right)\,dx\right)^\frac{\vartheta \sig}{s}\left(\fint_{B_{\rho}(x_0)}\frac{|v|^r}{\rho^r}\,dx\right)^\frac{(1-\vartheta)\sig}{r}
	\end{align*}
  for every $v\in W^{1,s}(B_{\rho}(x_0))$.
\end{lemma}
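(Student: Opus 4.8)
The plan is to reduce the inequality to the unit ball by a scaling argument, and then to recognize the resulting estimate on the unit ball as the classical Gagliardo--Nirenberg interpolation inequality, which in turn follows from Sobolev's embedding theorem combined with H\"older's inequality.

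First I would pass to $\rho=1$. By translation invariance we may take $x_0=0$. Given $v\in W^{1,s}(B_\rho(0))$, set $w(y):=v(\rho y)$ for $y\in B_1(0)$, so that $w\in W^{1,s}(B_1(0))$ with $\na w(y)=\rho\,\na v(\rho y)$. A change of variables yields
\[
\fint_{B_\rho(0)}\frac{|v|^\sig}{\rho^\sig}\,dx=\rho^{-\sig}\fint_{B_1(0)}|w|^\sig\,dy,\qquad
\fint_{B_\rho(0)}\Bigl(\frac{|v|^s}{\rho^s}+|\na v|^s\Bigr)\,dx=\rho^{-s}\fint_{B_1(0)}\bigl(|w|^s+|\na w|^s\bigr)\,dy,
\]
and similarly $\fint_{B_\rho(0)}\tfrac{|v|^r}{\rho^r}\,dx=\rho^{-r}\fint_{B_1(0)}|w|^r\,dy$. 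Since $\tfrac{\vartheta\sig}{s}\cdot s+\tfrac{(1-\vartheta)\sig}{r}\cdot r=\sig$, both sides of the asserted inequality carry exactly the factor $\rho^{-\sig}$, so it is equivalent to its version for $\rho=1$, $x_0=0$. If $v\notin L^r(B_1(0))$ the right-hand side is infinite, so I may also assume $w\in L^r(B_1(0))$.

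For the unit-ball estimate I would use Sobolev's and H\"older's inequalities, and I describe the main case $s<n$; the cases $s=n$ and $s>n$ are analogous, using that $W^{1,s}(B_1(0))$ embeds into every finite Lebesgue space, respectively into $L^\infty(B_1(0))$ (for $s=n$ at the endpoint exponent one uses instead the standard borderline version of the Gagliardo--Nirenberg inequality, e.g.\ the Ladyzhenskaya-type estimate). For $s<n$ put $s^*=\tfrac{ns}{n-s}$. Sobolev's inequality on the ball gives $\lVert w\rVert_{L^{s^*}(B_1(0))}\le c(n,s)\lVert w\rVert_{W^{1,s}(B_1(0))}$, and the hypothesis, after dividing by $-n$, reads $\tfrac1\sig\ge\tfrac\vartheta{s^*}+\tfrac{1-\vartheta}r$. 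Let $\sig_*\ge\sig$ be defined by $\tfrac1{\sig_*}=\tfrac\vartheta{s^*}+\tfrac{1-\vartheta}r$; then $1\le\sig_*\le\max\{s^*,r\}$ and $\tfrac{\vartheta\sig_*}{s^*}+\tfrac{(1-\vartheta)\sig_*}{r}=1$, so H\"older's inequality with the conjugate exponents $\tfrac{s^*}{\vartheta\sig_*}$ and $\tfrac{r}{(1-\vartheta)\sig_*}$ gives
\[
\fint_{B_1(0)}|w|^{\sig_*}\,dy\le c(n)\Bigl(\fint_{B_1(0)}|w|^{s^*}\,dy\Bigr)^{\vartheta\sig_*/s^*}\Bigl(\fint_{B_1(0)}|w|^r\,dy\Bigr)^{(1-\vartheta)\sig_*/r}.
\]
Substituting the Sobolev estimate into the first factor (and comparing $\lVert w\rVert_{W^{1,s}(B_1(0))}^s$ with $\int_{B_1(0)}(|w|^s+|\na w|^s)\,dy$) gives the claim for $\sig=\sig_*$. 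If $\sig<\sig_*$, an application of Jensen's inequality, $\fint_{B_1(0)}|w|^\sig\,dy\le\bigl(\fint_{B_1(0)}|w|^{\sig_*}\,dy\bigr)^{\sig/\sig_*}$, followed by raising the previous estimate to the power $\sig/\sig_*$, reduces to this case. Undoing the scaling then finishes the proof.

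The statement is a standard form of the Gagliardo--Nirenberg inequality, so there is no essential difficulty; the points that demand care are the exponent bookkeeping across the full admissible range of $s,r,\sig,\vartheta$ --- in particular the borderline cases $s\ge n$ --- the fact that the dimensional condition is stated as an inequality rather than an equality, which is why one first proves the critical exponent $\sig=\sig_*$ and then recovers the subcritical range by Jensen's inequality on the bounded ball, and verifying that the constants produced by Sobolev's inequality, H\"older's inequality and the normalization of the averaged integrals combine into a constant of the form claimed. Alternatively, the unit-ball estimate can be quoted directly from a standard reference.
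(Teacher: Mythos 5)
The paper does not prove this lemma at all: it is stated as the classical Gagliardo--Nirenberg interpolation inequality and used as a black box (only the iteration lemma carries a reference). So your closing alternative --- quoting the unit-ball estimate from a standard reference --- is exactly what the paper does, and your scaling reduction plus the argument in the main case is a perfectly reasonable way to make it self-contained. The reduction to $\rho=1$ is correct (both sides scale like $\rho^{-\sigma}$ because $\tfrac{\vartheta\sigma}{s}\,s+\tfrac{(1-\vartheta)\sigma}{r}\,r=\sigma$), and for $s<n$ your chain --- Sobolev into $L^{s^*}$, H\"older with the conjugate pair $\tfrac{s^*}{\vartheta\sigma_*},\tfrac{r}{(1-\vartheta)\sigma_*}$ at the critical exponent $\sigma_*$ determined by $\tfrac{1}{\sigma_*}=\tfrac{\vartheta}{s^*}+\tfrac{1-\vartheta}{r}$, then Jensen on the bounded ball to descend to $\sigma\le\sigma_*$ --- is complete and handles correctly the fact that the dimensional condition is an inequality rather than an equality.

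The one genuine gap is the assertion that the cases $s\ge n$ are ``analogous.'' For $s>n$ the hypothesis reads $\tfrac1\sigma\ge\vartheta\bigl(\tfrac1s-\tfrac1n\bigr)+\tfrac{1-\vartheta}{r}$, and since $\tfrac1s-\tfrac1n<0$ this admits exponents with $(1-\vartheta)\sigma>r$. In that sub-regime the natural analogue of your argument --- estimate the $\vartheta$-portion by $\lVert w\rVert_{L^\infty}\le c\lVert w\rVert_{W^{1,s}}$ and the remaining factor $\fint|w|^{(1-\vartheta)\sigma}$ by $\bigl(\fint|w|^r\bigr)^{(1-\vartheta)\sigma/r}$ --- does not close, because Jensen runs in the wrong direction when $(1-\vartheta)\sigma>r$; the same obstruction blocks the variant $\lVert w\rVert_{L^\sigma}\le\lVert w\rVert_{L^\infty}^{1-r/\sigma}\lVert w\rVert_{L^r}^{r/\sigma}$, which puts too much weight on the $W^{1,s}$ factor. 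To cover this range one needs the endpoint interpolation $\lVert w\rVert_{L^\infty}\le c\lVert w\rVert_{W^{1,s}}^{a}\lVert w\rVert_{L^r}^{1-a}$ with $0=a\bigl(\tfrac1s-\tfrac1n\bigr)+\tfrac{1-a}{r}$ (a Morrey-type Gagliardo--Nirenberg estimate), or simply the full Nirenberg theorem; likewise the $s=n$ endpoint you defer to a ``borderline version'' is precisely the exceptional case of that theorem (admissible here only because $\vartheta<1$), so it deserves either a proof or an explicit citation. The gap is harmless for the paper's actual applications --- there one has $r=2$ and $(1-\vartheta)\sigma$ equal to $q-\theta p$ or $q-p<1$, so the easy regime $(1-\vartheta)\sigma\le r$ always holds --- but as the lemma is stated your proof does not yet cover its full range, and the clean fix is to invoke the standard reference, as the paper implicitly does.
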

\begin{lemma}\label{sec2:lem:2}
	Let $0<r<R<\infty$ and $h:[r,R]\longrightarrow\RR$ be a non-negative and bounded function. Suppose there exist $\vartheta\in(0,1)$, $A,B\ge0$ and $\gamma>0$ such that
	\begin{align*}
		h(r_1)\le \vartheta h(r_2)+\frac{A}{(r_2-r_1)^\gamma}+B
		\quad\text{for all}\quad
		0<r\le r_1<r_2\le R.
	\end{align*}
	Then there exists a constant $c=c(\vartheta,\gamma)$ such that
	\begin{align*}
		h(r)\le c\left(\frac{A}{(R-r)^\gamma}+B\right).
	\end{align*}
\end{lemma}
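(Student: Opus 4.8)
The plan is to apply the assumed inequality along a geometric sequence of radii filling up $[r,R)$ and then to sum the resulting geometric series, using $\vartheta<1$ together with a careful choice of the ratio of the sequence.

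Concretely, I would first fix $\tau\in(0,1)$ with $\vartheta\tau^{-\gamma}<1$; such a $\tau$ exists since $\vartheta<1$, and one may take $\tau:=\bigl(\tfrac{1+\vartheta}{2}\bigr)^{1/\gamma}$, so that $\tau$ depends only on $\vartheta$ and $\gamma$. Set $\rho_0:=r$ and $\rho_{i+1}:=\rho_i+(1-\tau)\tau^{i}(R-r)$ for $i\ge0$. A telescoping computation gives $\rho_i=R-\tau^{i}(R-r)$, hence $(\rho_i)$ is increasing, $\rho_i\in[r,R)$ for every $i$, and $\rho_i\to R$. Thus each pair $r_1=\rho_i<r_2=\rho_{i+1}$ is admissible in the hypothesis, and since $\rho_{i+1}-\rho_i=(1-\tau)\tau^{i}(R-r)$ we obtain
\begin{align*}
	h(\rho_i)\le \vartheta\,h(\rho_{i+1})+\frac{A}{\bigl((1-\tau)\tau^{i}(R-r)\bigr)^{\gamma}}+B
	\qquad\text{for every }i\ge0.
\end{align*}

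Next I would iterate this inequality $k$ times starting from $i=0$, which yields
\begin{align*}
	h(r)=h(\rho_0)\le \vartheta^{k}h(\rho_k)
	+\frac{A}{(1-\tau)^{\gamma}(R-r)^{\gamma}}\sum_{i=0}^{k-1}\bigl(\vartheta\tau^{-\gamma}\bigr)^{i}
	+B\sum_{i=0}^{k-1}\vartheta^{i}.
\end{align*}
Since $h$ is bounded on $[r,R]$ and $\vartheta\in(0,1)$, the term $\vartheta^{k}h(\rho_k)$ tends to $0$ as $k\to\infty$; since $\vartheta\tau^{-\gamma}<1$ and $\vartheta<1$, both geometric series converge. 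Passing to the limit $k\to\infty$ gives
\begin{align*}
	h(r)\le \frac{1}{1-\vartheta\tau^{-\gamma}}\cdot\frac{A}{(1-\tau)^{\gamma}(R-r)^{\gamma}}+\frac{B}{1-\vartheta},
\end{align*}
which is the claimed bound with $c:=\max\Bigl\{\tfrac{1}{(1-\tau)^{\gamma}(1-\vartheta\tau^{-\gamma})},\,\tfrac{1}{1-\vartheta}\Bigr\}$; with the above choice of $\tau$ this constant depends only on $\vartheta$ and $\gamma$.

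The argument is elementary and the only delicate point is the choice of the ratio $\tau$: it must be close enough to $1$ that $\vartheta\tau^{-\gamma}<1$ (so the series multiplying $A$ converges) yet strictly below $1$ (so that $\rho_i\nearrow R$ while the radii remain in $[r,R]$), and this balance is precisely where the hypothesis $\vartheta<1$ enters. Boundedness of $h$ is used only to discard the tail $\vartheta^{k}h(\rho_k)$ in the limit; no continuity or monotonicity of $h$ is needed.
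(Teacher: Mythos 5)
Your proof is correct and is essentially the argument behind the paper's reference: the paper does not prove this lemma but cites \cite[Lemma 8.3]{MR1962933}, and your geometric-sequence iteration with the ratio $\tau$ chosen so that $\vartheta\tau^{-\gamma}<1$ is exactly that standard proof. Nothing further is needed.
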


\subsection{The $p$-intrinsic case} In this case we consider estimates in $p$-intrinsic cylinders as in \eqref{sec4:31} and assume that \eqref{sec5:1} holds.
We denote
	\begin{align*}
		S(u,Q_{\rho}^\la(z_0))=\sup_{I_{\rho}^\la(t_0)}\fint_{B_{\rho}(x_0)}\frac{|u-u_{Q_{\rho}^\la(z_0)}|^2}{\rho^2}\,dx
	\end{align*}	
and
$M_2=\|u\|_{L^\infty(0,T;L^2(\Omega))}$.

\begin{lemma}\label{sec5:lem:1}
	Let $u$ be a weak solution to \eqref{sec1:1}. Then there exists a constant $c=c(\data)$ such that 
	\begin{align*}
		S(u,Q_{2\rho}^\la(z_0))=\sup_{I_{2\rho}^\la(t_0)}\fint_{B_{2\rho}(x_0)}\frac{|u-u_{Q_{2\rho}^\la(z_0)}|^2}{(2\rho)^2}\,dx\le c\la^2,
	\end{align*}	
 whenever $Q_{2\cv\rho}^{\la}(z_0)\subset\Omega_T$ satisfies \eqref{sec5:1}.
\end{lemma}

\begin{proof}
	Let $2\rho\le \rho_1<\rho_2\le 4\rho$. By Lemma~\ref{sec3:lem:1} there exists a constant $c=c(n,p,q,\nu,L)$ such that
	\begin{align}\label{sec5:3}
		\begin{split}
			&\la^{p-2}S(u,Q_{\rho_1}^\la(z_0))
			\le \frac{c\rho_2^q}{(\rho_2-\rho_1)^q}\fiint_{Q_{\rho_2}^\la(z_0)}\left(\frac{|u-u_{Q_{\rho_2}^\la(z_0)}|^p}{\rho_2^p}+a(z)\frac{|u-u_{Q_{\rho_2}^\la(z_0)}|^q}{\rho_2^q}\right)\,dz\\
			&\qquad+\frac{c \rho_2^2\la^{p-2}}{(\rho_2-\rho_1)^2}\fiint_{Q_{\rho_2}^\la(z_0)}\frac{|u-u_{Q_{\rho_2}^\la(z_0)}|^2}{\rho_2^2}\,dz+c\fiint_{Q_{\rho_2}^\la(z_0)}H(z,|F|)\,dz.
		\end{split}
	\end{align}
   
   We estimate the first term on the right-hand side of \eqref{sec5:3}. By Lemma~\ref{sec4:lem:2}  with $\theta=1$ and the second condition in \eqref{sec5:1}, we obtain 
	\begin{align}\label{sec5:4}
		\fiint_{Q_{\rho_2}^\la(z_0)}\frac{|u-u_{Q_{\rho_2}^\la(z_0)}|^{ p}}{\rho_2^{p}}\,dz\le c\la^{p},
	\end{align}
 where $c=c(n,N,p,q,\alpha,L,[a]_{\alpha},M_1)$.
On the other hand, we observe that
\begin{align*}
	\begin{split}
		&\fiint_{Q_{\rho_2}^\la(z_0)}a(z)\frac{|u-u_{Q_{\rho_2}^\la(z_0)}|^q}{\rho_2^q}\,dz\\
		&\qquad\le \fiint_{Q_{\rho_2}^\la(z_0)}\inf_{w\in 
			Q_{\rho_2}^\la(z_0)}a(w)\frac{|u-u_{Q_{\rho_2}^\la(z_0)}|^q}{\rho_2^q}\,dz
		+[a]_\alpha\rho_2^\alpha\fiint_{Q_{\rho_2}^\la(z_0)}\frac{|u-u_{Q_{\rho_2}^\la(z_0)}|^q}{\rho_2^q}\,dz.
	\end{split}
\end{align*}
	By  Lemma~\ref{sec4:lem:3} with $\theta=1$ and \eqref{sec5:1}, we obtain
	\begin{align*}
		\fiint_{Q_{\rho_2}^\la(z_0)}\inf_{w\in Q_{\rho_2}^\la(z_0)}a(w) \frac{|u-u_{Q_{\rho_2}^\la(z_0)}|^{ q}}{\rho_2^{q}}\,dz\le c\la^p,
	\end{align*}
 where $c=c(n,N,p,q,\alpha,L,[a]_{\alpha},M_1)$.
	On the other hand, by Lemma~\ref{sec2:lem:1} with $\sig=q$, $s=p$, $r=2$ and $\vartheta=\tfrac{p}{q}$, we obtain
	\begin{align*}
		\begin{split}
		&\rho_2^\alpha\fiint_{Q_{\rho_2}^\la(z_0)}\frac{|u-u_{Q_{\rho_2}^\la(z_0)}|^q}{\rho_2^q}\,dz\\
			&\qquad\le c\rho_2^\alpha\fiint_{Q_{\rho_2}^\la(z_0)}\left(\frac{|u-u_{Q_{\rho_2}^\la(z_0)}|^{p}}{\rho_2^{ p}}+|\na u|^{ p}\right)\,dz \left(S(u,Q_{\rho_2}^\la(z_0))\right)^\frac{q- p}{2}
		\end{split}
	\end{align*}
 where $c=c(n,q)$.
    We observe that
    \begin{align*}
    	\begin{split}
    		&\rho_2^\alpha \left(\sup_{I_{\rho_2}^\la(t_0)}\fint_{B_{\rho_2}(x_0)} \frac{|u-u_{Q_{\rho_2}^\la(z_0)}|^2}{\rho_2^2}\,dx\right)^\frac{q- p}{2}\le \rho_2^{\alpha}\left(2^2\sup_{I_{\rho_2}^\la(t_0)}\fint_{B_{\rho_2}(x_0)} \frac{|u|^2}{\rho_2^2}\,dx\right)^\frac{q- p}{2}\\
    		&\qquad\qquad\qquad\le\rho^{\alpha-\frac{(q-p)(n+2)}{2}}\left(2^2\sup_{I_{\rho_2}^\la(t_0)}\int_{B_{\rho_2}(x_0)}|u|^2\,dx\right)^\frac{q-p}{2}\le c,
    	\end{split}
    \end{align*}
    where $c=c(n,N,p,q,\alpha,\diam(\Om),M_2)$.
    Furthermore, by \eqref{sec5:4} we have
    \begin{align*}
    	\rho_2^\alpha\fiint_{Q_{\rho_2}^\la(z_0)}\frac{|u-u_{Q_{\rho_2}^\la(z_0)}|^q}{\rho_2^q}\,dz\le c\la^p,
    \end{align*}
    where $c=c(n,N,p,q,\alpha,L,[a]_\alpha,\diam(\Om),M_1,M_2)$.
    
   For the second term on the right-hand side of~\eqref{sec5:3}, the Poincar\'e inequality implies that
	\begin{align*}
		\begin{split}
			&\fiint_{Q_{\rho_2}^\la(z_0)}\frac{|u-u_{Q_{\rho_2}^\la(z_0)}|^2}{\rho_2^2}\,dz\\
			&\qquad=\fint_{I_{\rho_2}^\la(t_0)}\left(\fint_{B_{\rho_2}(x_0)}\frac{|u-u_{Q_{\rho_2}^\la(z_0)}|^2}{\rho_2^2}\,dx\right)^\frac{1}{2}\left(\fint_{B_{\rho_2}(x_0)}\frac{|u-u_{Q_{\rho_2}^\la(z_0)}|^2}{\rho_2^2}\,dx\right)^\frac{1}{2}\,dt\\
			&\qquad\le c \fint_{I_{\rho_2}^\la(t_0)}\left(\fint_{B_{\rho_2}(x_0)}\left(\frac{|u-u_{Q_{\rho_2}^\la(z_0)}|^{ p}}{\rho_2^{ p}}+|\na u|^{ p}\right)\,dx\right)^\frac{1}{p}\,dt\left(S(u,Q_{\rho_2}^\la(z_0))\right)^\frac{1}{2},
		\end{split}
	\end{align*}
 where $c=c(n,N,p)$.
	By H\"older's inequality and \eqref{sec5:4}, we have
	\begin{align*}
		\fiint_{Q_{\rho_2}^\la(z_0)}\frac{|u-u_{Q_{\rho_2}^\la(z_0)}|^2}{\rho_2^2}\,dz
  \le c\lambda S(u,Q_{\rho_2}^\la(z_0))^\frac{1}{2},
	\end{align*}
	where $c=c(n,N,p,q,\alpha,L,[a]_{\alpha},M_1)$.
	
   For the last term on the right-hand side of \eqref{sec5:3}, by \eqref{sec5:1} we obtain
	\begin{align*}
		\fiint_{Q_{\rho_2}^\la(z_0)}H(z,|F|)\,dz\le \la^p.
	\end{align*}
	By combining all estimates above, we conclude from \eqref{sec5:3} that
	\begin{align*}
		\begin{split}
			S(u,Q_{\rho_1}^\la(z_0))\le c\frac{\rho_2^q}{(\rho_2-\rho_1)^q}\la^2+c\frac{ \rho_2^2}{(\rho_2-\rho_1)^2}\la\ S(u,Q_{\rho_2}^\la(z_0))^\frac{1}{2}.
		\end{split}
	\end{align*}
    Finally, we  apply Young's inequality to obtain
    \begin{align*}
    	\begin{split}
    		S(u,Q_{\rho_1}^\la(z_0))\le \frac{1}{2}S(u,Q_{\rho_2}^\la(z_0))+ c\left(\frac{\rho_2^q}{(\rho_2-\rho_1)^q}+\frac{ \rho_2^4}{(\rho_2-\rho_1)^4}\right)\la^2.
    	\end{split}
    \end{align*}
	The proof is concluded by an application of Lemma~\ref{sec2:lem:2}.	
\end{proof}

Next we prove an estimate for the first term on the right-hand side of the energy estimate in Lemma~\ref{sec3:lem:1} by using Lemma~\ref{sec2:lem:1}.
\begin{lemma}\label{sec5:lem:2}
	Let $u$ be a weak solution to \eqref{sec1:1}. Then there exist constants $c=c(\data)$ and $\theta_0=\theta_0(n,p,q)\in(0,1)$ such that for any $\theta\in(\theta_0,1)$,
	\begin{align*}
		\begin{split}
			&\fiint_{Q_{2\rho}^\la(z_0)}\left(\frac{|u-u_{Q_{2\rho}^\la(z_0)}|^p}{(2\rho)^p}+a(z)\frac{|u-u_{Q_{2\rho}^\la(z_0)}|^q}{(2\rho)^q}\right)\,dz\\
			&\qquad\le  c\fiint_{Q_{2\rho}^\la(z_0)}\left(\frac{|u-u_{Q_{2\rho}^\la(z_0)}|^{\theta p}}{(2\rho)^{\theta p}}+|\na u|^{\theta p}\right)\,dz\left(S(u,Q_{2\rho}^\la(z_0))\right)^\frac{(1-\theta )p}{2}\\
			&\qquad\qquad+c  \fiint_{Q_{2\rho}^\la(z_0)}\left(\inf_{w\in Q_{2\rho}^\la(z_0)}a(w)^{\theta}\frac{|u-u_{Q_{2\rho}^\la(z_0)}|^{\theta q}}{(2\rho)^{\theta q}}+\inf_{w\in Q_{2\rho}^\la(z_0)}a(w)^{\theta}|\na u|^{\theta q}\right)\,dz\\
			&\qquad\qquad\qquad\times\la^{(p-q)(1-\theta)}\left(S(u,Q_{2\rho}^\la(z_0))\right)^\frac{(1-\theta)q}{2},
		\end{split}
	\end{align*}
  whenever $Q_{2\cv\rho}^{\la}(z_0)\subset\Omega_T$ satisfies \eqref{sec5:1}.
\end{lemma}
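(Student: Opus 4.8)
The idea is to interpolate the $L^p$ and $L^q$ norms of $u-(u)_{z_0;2\rho}^{\la^p}$ via the Gagliardo-Nirenberg inequality (Lemma~\ref{sec2:lem:1}) using the sup-of-$L^2$ quantity $S(u)_{z_0;2\rho}^{\la^p}$ as the low-exponent factor, and then to control the intrinsic scaling factors that appear by invoking the structural condition $q\le p+\frac{2\alpha}{n+2}$ together with $\la\ge1$. I would begin by splitting the left-hand side into the pure $p$-term and the $a(z)$-weighted $q$-term and treat them in turn.

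For the $p$-term, apply Lemma~\ref{sec2:lem:1} on each time slice $B_{2\rho}(x_0)$ with $\sig=p$, a suitable $s=\theta p$ (so that $\theta>\theta_0$ for an appropriate threshold $\theta_0=\theta_0(n,p,q)$ making the dimensional balance condition hold), and $r=2$; the exponent split is $p=\vartheta\theta p+(1-\vartheta)\cdot$(contribution from the $L^2$ factor), chosen so that the $L^2$-slice factor contributes the power $\tfrac{(1-\theta)p}{2}$. Integrating in time and using that the $L^2$-slice quantity is bounded uniformly by $S(u)_{z_0;2\rho}^{\la^p}$ pulls that factor out of the time integral, yielding the first term on the right-hand side. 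For the $a(z)$-weighted $q$-term, first replace $a(z)$ by $\inf_{Q_{2\rho}^\la}a(\tilde z)$ plus the oscillation term $[a]_\alpha(2\rho)^\alpha$ exactly as in the proofs of Lemmas~\ref{sec4:lem:2}-\ref{sec4:lem:3}; the oscillation piece, by the argument already used in Lemma~\ref{sec5:lem:1}, is absorbed since $(2\rho)^\alpha(S(u)_{z_0;2\rho}^{\la^p})^{(q-p)/2}\lesssim \rho^{\alpha-(q-p)(n+2)/2}M_2^{(q-p)/2}$ which is bounded using $q-p\le\frac{2\alpha}{n+2}$ together with $Q_{2\rho}^\la\subset\Om_T$ hence $\rho\le\diam(\Om)$. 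For the leading piece with the frozen coefficient, again apply Lemma~\ref{sec2:lem:1} with $\sig=q$, $s=\theta p$, $r=2$, pulling out $(S(u)_{z_0;2\rho}^{\la^p})^{(1-\theta)q/2}$; the remaining scale factor of the form $\rho^{\alpha-\cdots}\la^{\cdots}$ must be reorganized into the advertised power $\la^{(p-q)(1-\theta)}$, which is where the intrinsic geometry $|Q_\rho^\la|\sim\rho^{n+2}\la^{2-p}$ and the relation $q-p\le\frac{2\alpha}{n+2}$ enter.

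The main obstacle I anticipate is bookkeeping the exponents so that every surplus power of $\rho$ and $\la$ is nonpositive (so it can be absorbed into a constant depending on $\data$) and the residual $\la$-power is exactly $(p-q)(1-\theta)$ as claimed — in particular, choosing $\theta_0$ so that the Gagliardo-Nirenberg balance inequality $-\frac{n}{\sig}\le\vartheta(1-\frac ns)-(1-\vartheta)\frac nr$ is satisfied simultaneously for the $p$-term and the $q$-term forces a lower bound on $\theta$ depending only on $n,p,q$. Once those exponent constraints are verified, the rest is an assembly of Hölder's inequality (to bring the time-integral back to a full intrinsic-cylinder average) and the substitutions above, with no genuinely new difficulty beyond what already appears in Lemmas~\ref{sec4:lem:2}, \ref{sec4:lem:3} and \ref{sec5:lem:1}.
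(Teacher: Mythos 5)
Your overall architecture is the right one, and two of the three pieces are handled essentially as in the paper: the pure $p$-term via Lemma~\ref{sec2:lem:1} with $\sig=p$, $s=\theta p$, $r=2$, and the oscillation piece $[a]_{\alpha}(2\rho)^\alpha$ times the $q$-average via Gagliardo--Nirenberg with $\vartheta=\theta p/q$ followed by the bound $(2\rho)^{\alpha}\bigl[S(u)_{z_0;2\rho}^{\la^p}\bigr]^{(q-p)/2}\le c\,(2\rho)^{\alpha-\frac{(q-p)(n+2)}{2}}M_2^{(q-p)/2}\le c(\data)$, which uses $q\le p+\tfrac{2\alpha}{n+2}$; your threshold $\theta_0$ forced by the balance condition (namely $nq/[(n+2)p]$) is also the correct one.

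The gap is in the frozen-coefficient piece, i.e.\ the term with $\inf_{\tilde z\in Q_{2\rho}^\la(z_0)}a(\tilde z)$ in front of the $q$-average, which is the only place the advertised factor $\la^{(p-q)(1-\theta)}$ can originate. You propose to apply Lemma~\ref{sec2:lem:1} there with $s=\theta p$ and then to ``reorganize a scale factor of the form $\rho^{\alpha-\cdots}\la^{\cdots}$'' using the intrinsic geometry and $q-p\le\tfrac{2\alpha}{n+2}$. This does not work as described. First, with $s=\theta p$ the interpolation produces the quantities $|u-(u)_{z_0;2\rho}^{\la^p}|^{\theta p}/(2\rho)^{\theta p}$ and $|\na u|^{\theta p}$, not the quantities $\inf a(\tilde z)^{\theta}|u-(u)_{z_0;2\rho}^{\la^p}|^{\theta q}/(2\rho)^{\theta q}$ and $\inf a(\tilde z)^{\theta}|\na u|^{\theta q}$ that the second term of the asserted estimate requires. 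Second, there is no $\rho^\alpha$ in this piece to reorganize (that occurs only in the oscillation piece), and the intrinsic measure $|Q_\rho^\la|\sim\rho^{n+2}\la^{2-p}$ is irrelevant here because the interpolation is performed on time slices. The correct mechanism is to apply Lemma~\ref{sec2:lem:1} with $\sig=q$, $s=\theta q$, $r=2$ (admissible for $\theta>n/(n+2)$), which leaves the multiplicative surplus $\inf_{\tilde z}a(\tilde z)^{1-\theta}\bigl[S(u)_{z_0;2\rho}^{\la^p}\bigr]^{(1-\theta)q/2}$, and then to invoke the $p$-phase condition $K\la^{p}\ge a(z_0)\la^{q}$ from \eqref{sec5:1}, giving $\inf_{\tilde z}a(\tilde z)^{1-\theta}\le a(z_0)^{1-\theta}\le K^{1-\theta}\la^{(p-q)(1-\theta)}$. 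Your proposal never uses the first condition in \eqref{sec5:1}, which is the entire point of the case distinction in Section~\ref{section5}; without it the factor $\la^{(p-q)(1-\theta)}$ is not obtainable.
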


\begin{proof}
	By \eqref{sec1:7} we obtain
	\begin{equation}\label{sec5:41}
		\begin{split}
			&\fiint_{Q_{2\rho}^\la(z_0)}\left(\frac{|u-u_{Q_{2\rho}^\la(z_0)}|^p}{(2\rho)^p}+a(z)\frac{|u-u_{Q_{2\rho}^\la(z_0)}|^q}{(2\rho)^q}\right)\,dz\\
			&\qquad\le\fiint_{Q_{2\rho}^\la(z_0)}\frac{|u-u_{Q_{2\rho}^\la(z_0)}|^p}{(2\rho)^p}\,dz+\fiint_{Q_{2\rho}^\la(z_0)}\inf_{w\in Q_s^\la(z_0)}a(w)\frac{|u-u_{Q_{2\rho}^\la(z_0)}|^q}{(2\rho)^q}\,dz\\
			&\qquad\qquad+[a]_{\alpha}(2\rho)^\alpha\fiint_{Q_{2\rho}^\la(z_0)}\frac{|u-u_{Q_{2\rho}^\la(z_0)}|^q}{(2\rho)^q}\,dz.
		\end{split}
	\end{equation}
	
	We begin with the first term on the right-hand side of \eqref{sec5:41}. By choosing $\sig=p$, $s=\theta p$ and $r=2$, we see that any $\theta\in(n/(n+2),1)$ satisfies the condition in Lemma~\ref{sec2:lem:1} as
	\begin{align*}
		-\frac{n}{p}\le\theta\left(1-\frac{n}{\theta p}\right)-(1-\theta)\frac{n}{2}\Longleftrightarrow \frac{n}{n+2}\le \theta.
	\end{align*}
	Thus, we obtain
	\begin{align*}
			&\fiint_{Q_{2\rho}^\la(z_0)}\frac{|u-u_{Q_{2\rho}^\la(z_0)}|^p}{(2\rho)^p}\,dz\\
			&\qquad\le c\fiint_{Q_{2\rho}^\la(z_0)}\left(\frac{|u-u_{Q_{2\rho}^\la(z_0)}|^{\theta p}}{(2\rho)^{\theta p}}+|\na u|^{\theta p}\right)\,dz\left(S(u,Q_{2\rho}^\la(z_0))\right)^{\frac{(1-\theta)p}{2}},
	\end{align*}
 where $c=c(n,p)$.

	For the second term on the right-hand side of \eqref{sec5:41}, we apply Lemma~\ref{sec2:lem:1} with $\sig=q$, $s=\theta q$ and $r=2$. For any $\theta\in(n/(n+2),1)$, we have
	\begin{align*}
		\begin{split}
			&\fiint_{Q_{2\rho}^\la(z_0)}\inf_{w\in Q_{2\rho}^\la(z_0)}a(w)\frac{|u-u_{Q_{2\rho}^\la(z_0)}|^q}{(2\rho)^q}\,dz\\
			&\qquad\le c\fiint_{Q_{2\rho}^\la(z_0)}\left(\inf_{w\in Q_{2\rho}^\la(z_0)}a(w)^{\theta}\frac{|u-u_{Q_{2\rho}^\la(z_0)}|^{\theta q}}{(2\rho)^{\theta q}}
			+\inf_{w\in Q_{2\rho}^\la(z_0)}a(w)^{\theta}|\na u|^{\theta q}\right)\,dz\\
			&\qquad\qquad\times \inf_{w\in Q_{2\rho}^\la(z_0)}a(w)^{1-\theta} \left(S(u,Q_{2\rho}^\la(z_0))\right)^\frac{(1-\theta)q}{2}
		\end{split}
	\end{align*}
 where $c=c(n,q)$.
	By using the first condition in \eqref{sec5:1}, we have
	\begin{align*}
		\begin{split}
	     &\fiint_{Q_{2\rho}^\la(z_0)}\inf_{w\in Q_{2\rho}^\la(z_0)}a(w)\frac{|u-u_{Q_{2\rho}^\la(z_0)}|^q}{(2\rho)^q}\,dz\\
        &\qquad\le c\fiint_{Q_{2\rho}^\la(z_0)}\left(\inf_{w\in Q_{2\rho}^\la(z_0)}a(w)^{\theta}\frac{|u-u_{Q_{2\rho}^\la(z_0)}|^{\theta q}}{(2\rho)^{\theta q}}
        +\inf_{w\in Q_{2\rho}^\la(z_0)}a(w)^{\theta}|\na u|^{\theta q}\right)\,dz\\
        &\qquad\qquad\times \la^{(p-q)(1-\theta)}S(u,Q_{2\rho}^\la(z_0))^\frac{(1-\theta)q}{2}.
		\end{split}
	\end{align*}
	
	Then we consider the last term on the right-hand side of \eqref{sec5:41}. We observe that
	\begin{align*}
		\frac{nq}{(n+2)p}\le \frac{n}{n+2}\left(1+\frac{2}{(n+2)p}\right)\le \frac{n}{n+2}\frac{n+3}{n+2}<1.
	\end{align*}
	Thus, by letting $\sig=q$, $s=\theta p$, $r = 2$ and $\vartheta=\theta p/q$, the assumptions in Lemma~\ref{sec2:lem:1} are satisfied for any $\theta\in(nq/((n+2)p),1)$, since
	\begin{align*}
		-\frac{n}{q}\le \frac{\theta p}{q}\left(1-\frac{n}{\theta p}\right)-\left(1-\frac{\theta p}{q}\right)\frac{n}{2}\Longleftrightarrow \frac{n q}{(n+2)p}\le \theta.
	\end{align*} 
	Therefore, we have
	\begin{align*}
		\begin{split}
			&(2\rho)^\alpha\fiint_{Q_{2\rho}^\la(z_0)}\frac{|u-u_{Q_{2\rho}^\la(z_0)}|^q}{(2\rho)^q}\,dz\\
			&\qquad\le c\fiint_{Q_{2\rho}^\la(z_0)}\left(\frac{|u-u_{Q_{2\rho}^\la(z_0)}|^{\theta p}}{(2\rho)^{\theta p}}+|\na u|^{\theta p}\right)\,dz\left(S(u,Q_{2\rho}^\la(z_0))\right)^{\frac{p(1-\theta)}{2}}\\
			&\qquad\qquad\times
			(2\rho)^\alpha\left(\sup_{I_{2\rho}^\la(t_0)}\fint_{B_{2\rho}(x_0)} \frac{|u-u_{Q_{2\rho}^\la(z_0)}|^2}{(2\rho)^2}\,dx\right)^\frac{q-p}{2},
		\end{split}
	\end{align*}
 where $c=c(n,q)$.
	Note that 
	\begin{equation*}
			(2\rho)^\alpha\left(\sup_{I_{2\rho}^\la(t_0)}\fint_{B_{2\rho}(x_0)} \frac{|u-u_{Q_{2\rho}^\la(z_0)}|^2}{(2\rho)^2}\,dx\right)^\frac{q- p}{2}
			\le(2\rho)^\alpha\left(4\sup_{I_{2\rho}^\la(t_0)}\fint_{B_{2\rho}(x_0)} \frac{|u|^2}{(2\rho)^2}\,dx\right)^\frac{q-p }{2}.
	\end{equation*}
Thus we obtain
	\begin{align*}
		\begin{split}
			&(2\rho)^\alpha\left(\sup_{I_{2\rho}^\la(t_0)}\fint_{B_{2\rho}(x_0)} \frac{|u-u_{Q_{2\rho}^\la(z_0)}|^2}{(2\rho)^2}\,dx\right)^\frac{q- p}{2}\\
			&\qquad\le c(2\rho)^{\alpha-\frac{(q-p)(n+2)}{2}}\left(\sup_{I_{2\rho}^\la(t_0)}\int_{B_{2\rho}(x_0)} |u|^2\,dx\right)^\frac{q-p }{2}\le c,
		\end{split}
	\end{align*}
 where $c=c(n,p,q,\alpha,\diam(\Om),M_2)$.
 The claim follows by combining the estimates above.
\end{proof}
At this stage, we have all the required tools to prove the reverse H\"older inequality when \eqref{sec5:1} holds true.
\begin{lemma}\label{sec5:lem:3}
	Let $u$ be a weak solution to \eqref{sec1:1}. Then there exist constants $c=c(\data)$ and $\theta_0=\theta_0(n,p,q)\in(0,1)$ such that for any $\theta\in(\theta_0,1)$,
	\begin{align*}
		\begin{split}
			&\fiint_{Q_{\rho}^\la(z_0)}\left(H(z,|\na u|)+H(z,|F|)\right)\,dz \\
			&\qquad\le c\left(\fiint_{Q_{2\rho}^\la(z_0)}H(z,|\na u|)^\theta\,dz\right)^\frac{1}{\theta}+c\fiint_{Q_{2\rho}^\la(z_0)}H(z,|F|)\,dz,
		\end{split}
	\end{align*}	
  whenever $Q_{2\cv\rho}^{\la}(z_0)\subset\Omega_T$ satisfies \eqref{sec5:1}.
\end{lemma}

\begin{proof}
	Lemma~\ref{sec3:lem:1} implies that
	\begin{align}\label{sec5:5}
		\begin{split}
			&\fiint_{Q_{\rho}^\la(z_0)}H(z,|\na u|)\,dz
			\le c\fiint_{Q_{2\rho}^\la(z_0)}\left(\frac{|u-u_{Q_{2\rho}^\la(z_0)}|^p}{(2\rho)^p}+a(z)\frac{|u-u_{Q_{2\rho}^\la(z_0)}|^q}{(2\rho)^q}\right)\,dz\\
			&\qquad\qquad+c\la^{p-2}\fiint_{Q_{2\rho}^\la(z_0)}\frac{|u-u_{Q_{2\rho}^\la(z_0)}|^2}{(2\rho)^2}\,dz+c\fiint_{Q_{2\rho}^\la(z_0)}H(z,|F|)\,dz,
		\end{split}
	\end{align}
	where $c=c(n,p,q,\nu,L)$.
	To estimate the first term on the right-hand side in \eqref{sec5:5}, we apply Lemma~\ref{sec5:lem:1} and Lemma~\ref{sec5:lem:2} to conclude that there exist $\theta_0=\theta_0(n,p,q)\in(0,1)$ and $c=c(\data)$ such that for any $\theta\in(\theta_0,1)$,
	\begin{align*}
		\begin{split}
			&\fiint_{Q_{2\rho}^\la(z_0)}\left(\frac{|u-u_{Q_{2\rho}^\la(z_0)}|^p}{(2\rho)^p}+a(z)\frac{|u-u_{Q_{2\rho}^\la(z_0)}|^q}{(2\rho)^q}\right)\,dz
			\le  c \la^{(1-\theta )p}\fiint_{Q_{2\rho}^\la(z_0)}H(z,|\na u|)^{\theta}\,dz\\
			&\qquad\qquad+c \la^{(1-\theta)p}\fiint_{Q_{2\rho}^\la(z_0)}\left(\frac{|u-u_{Q_{2\rho}^\la(z_0)}|^{\theta p}}{(2\rho)^{\theta p}}+\inf_{w\in Q_{2\rho}^\la(z_0)}a(w)^{\theta}\frac{|u-u_{Q_{2\rho}^\la(z_0)}|^{\theta q}}{(2\rho)^{\theta q}}\right)\,dz.
		\end{split}
	\end{align*}
	By Lemma~\ref{sec4:lem:2} and Lemma~\ref{sec4:lem:3} we obtain 
	\begin{align*}
		\begin{split}
			&\fiint_{Q_{2\rho}^\la(z_0)}\left(\frac{|u-u_{Q_{2\rho}^\la(z_0)}|^p}{(2\rho)^p}+a(z)\frac{|u-u_{Q_{2\rho}^\la(z_0)}|^q}{(2\rho)^q}\right)\,dz\\
			&\qquad\le  c \la^{(1-\theta )p}\fiint_{Q_{2\rho}^\la(z_0)}H(z,|\na u|)^{\theta}\,dz\\
			&\qquad\qquad+c  \la^{p-\left(p-1-\frac{\alpha p}{n+2}\right)\theta p}\left(\fiint_{Q_{2 \rho}^\la(z_0)}(|\na u|+|F|)^{\theta p}\,dz\right)^{p-1-\frac{\alpha p}{n+2}}\\
			&\qquad\qquad+c \la^{(1-\theta)p}\left(\fiint_{Q_{2 \rho}^\la(z_0)}H(z,|F|)\,dz\right)^{\theta}.
		\end{split}
	\end{align*}
    By recalling that $\tfrac{\alpha p}{n+2} < p-1$ and letting
    \begin{align*}
    	\beta=\min\left\{p-1-\frac{\alpha p}{n+2},\frac{1}{2}\right\},
    \end{align*}
    we have
    \begin{align*}
    	\begin{split}
    		&\fiint_{Q_{2\rho}^\la(z_0)}\left(\frac{|u-u_{Q_{2\rho}^\la(z_0)}|^p}{(2\rho)^p}+a(z)\frac{|u-u_{Q_{2\rho}^\la(z_0)}|^q}{(2\rho)^q}\right)\,dz\\
    		&\qquad\le  c \la^{p(1-\beta\theta)}\left(\fiint_{Q_{2\rho}^\la(z_0)}H(z,|\na u|)^{\theta}\,dz\right)^{\beta}
    		+c   \la^{(1-\beta \theta)p}\left(\fiint_{Q_{2\rho}^\la(z_0)}H(z,|F|)\,dz\right)^{\beta \theta}.
    	\end{split}
    \end{align*}

	To estimate the second term on the right-hand side of \eqref{sec5:5}, we apply the Poincar\'e inequality with $\theta\in(2n/((n+2)p),1)$ and Lemma~\ref{sec5:lem:1} to obtain
	\begin{align*}
		\begin{split}
			&\fiint_{Q_{2\rho}^\la(z_0)}\frac{|u-u_{Q_{2\rho}^\la(z_0)}|^2}{(2\rho)^2}\,dz\\
			&\qquad=\fint_{I_{2\rho}^\la(t_0)}\left(\fint_{B_{2\rho}(x_0)}\frac{|u-u_{Q_{2\rho}^\la(z_0)}|^2}{(2\rho)^2}\,dx\right)^\frac{1}{2}\left(\fint_{B_{2 \rho}(x_0)}\frac{|u-u_{Q_{2\rho}^\la(z_0)}|^2}{(2\rho)^2}\,dx\right)^\frac{1}{2}\,dt\\
			&\qquad\le c \fint_{I_{2\rho}^\la(t_0)}\left(\fint_{B_{2\rho}(x_0)}\left(\frac{|u-u_{Q_{2\rho}^\la(z_0)}|^{\theta  p}}{(2\rho)^{\theta  p}}+|\na u|^{\theta  p}\right)\,dx\right)^\frac{1}{\theta p}\left(S(u,Q_{2\rho}^\la(z_0))\right)^\frac{1}{2}\,dt\\
			&\qquad\le c\la \left(\fiint_{Q_{2 \rho}^\la(z_0)}\left(\frac{|u-u_{Q_{2\rho}^\la(z_0)}|^{\theta  p}}{(2\rho)^{\theta  p}}+|\na u|^{\theta  p}\right)\,dz\right)^\frac{1}{\theta p},
		\end{split}
	\end{align*}
 where $c=c(\data)$.
Lemma~\ref{sec4:lem:2} implies that
\begin{align*}
	&\la^{p-2}\fiint_{Q_{2\rho}^\la(z_0)}\frac{|u-u_{Q_{2\rho}^\la(z_0)}|^2}{(2\rho)^2}\,dz
	\le  c\la^{p-\beta}\left(\fiint_{Q_{2\rho}^\la(z_0)}H(z,|\na u|)^{\theta}\,dz\right)^{\frac{\beta}{\theta p}}\\
	&\qquad\qquad\qquad+c\la^{p-\beta}\left(\fiint_{Q_{2\rho}^\la(z_0)}H(z,|F|)\,dz\right)^{\frac{\beta}{p}}.
\end{align*}
By combining the estimates above and applying \eqref{sec5:5} and Young's inequality, we obtain
\begin{align*}
	\begin{split}
		&\fiint_{Q_{\rho}^\la(z_0)}H(z,|\na u|)\,dz\\
		&\qquad\le  c \la^{p-\beta}\left(\fiint_{Q_{2\rho}^\la(z_0)}H(z,|\na u|)^{\theta}\,dz\right)^{\frac{\beta}{\theta p}}
		+c\la^{p-\beta}\left(\fiint_{Q_{2\rho}^\la(z_0)}H(z,|F|)\,dz\right)^{\frac{\beta}{p}}\\
  &\qquad\le  \frac{1}{2}\la^p+c\left(\fiint_{Q_{2\rho}^\la(z_0)}H(z,|\na u|)^{\theta}\,dz\right)^{\frac{1}{\theta }}
		+c\fiint_{Q_{2\rho}^\la(z_0)}H(z,|F|)\,dz.
	\end{split}
\end{align*}
The third condition in \eqref{sec5:1} implies that
\begin{align*}
		\fiint_{Q_{\rho}^\la(z_0)}H(z,|\na u|)\,dz
		\le  c\left(\fiint_{Q_{2\rho}^\la(z_0)}H(z,|\na u|)^{\theta}\,dz\right)^{\frac{1}{\theta }}
		+c\fiint_{Q_{2\rho}^\la(z_0)}H(z,|F|)\,dz.
\end{align*}
This completes the proof.
\end{proof}

The following lemma will be used in the next section. 
\begin{lemma}\label{sec5:lem:4}
	Let $u$ be a weak solution to \eqref{sec1:1}. Then there exist constants $c=c(\data)$ and $\theta_0=\theta_0(n,p,q)\in(0,1)$ such that for any $\theta\in(\theta_0,1)$,
	\begin{align*}
		\begin{split}
			&\iint_{Q_{2\cv\rho}^\la(z_0)}H(z,|\na u|)\,dz
			\le c\La^{1-\theta}\iint_{Q_{2\rho}^\la(z_0)\cap \Psi(c^{-1}\La)}H(z,|\na u|)^\theta\,dz\\
			&\qquad\qquad\qquad +c\iint_{Q_{2\rho}^\la(z_0)\cap \Phi(c^{-1}\La)}H(z,|F|)\,dz,
		\end{split}
	\end{align*}
 whenever $Q_{2\cv\rho}^{\la}(z_0)\subset\Omega_T$ satisfies \eqref{sec5:1}.
 Here $\Psi(\La)$ and $\Phi(\La)$ are defined in \eqref{sec2:2} and \eqref{sec2:3}.
\end{lemma}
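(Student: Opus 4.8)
The plan is to derive the estimate from three ingredients: a crude upper bound on the outer cylinder $Q_{\cv(2\rho)}^\la(z_0)$ coming from the second line of \eqref{sec5:1}; the reverse H\"older inequality of Lemma~\ref{sec5:lem:3} on $Q_{2\rho}^\la(z_0)$ together with the equality in the third line of \eqref{sec5:1}; and a truncation at a level proportional to $\La$, as in the proof of Gehring's lemma.

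First I would observe that $s=\cv(2\rho)$ is admissible in the second line of \eqref{sec5:1}, so, using the scaling identity $|Q_{\cv(2\rho)}^\la(z_0)|=\cv^{n+2}|Q_{2\rho}^\la(z_0)|$, the bound $\la^p\le\La=g_{z_0}(\la)$, and $\cv=\cv(\data)$,
\[
\iint_{Q_{\cv(2\rho)}^\la(z_0)}(|\na u|^p+a(z)|\na u|^q)\,dz\le\la^p\,|Q_{\cv(2\rho)}^\la(z_0)|=\cv^{n+2}\la^p\,|Q_{2\rho}^\la(z_0)|\le c(\data)\,\La\,|Q_{2\rho}^\la(z_0)|.
\]
This is the only step in which the outer cylinder enters. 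Next, applying Lemma~\ref{sec5:lem:3} on $Q_{2\rho}^\la(z_0)$, whose left-hand side equals $\la^p$ by the third line of \eqref{sec5:1}, and using $\la^p\le\La=\la^p+a(z_0)\la^q\le(1+K)\la^p$ (valid in case \eqref{sec5:1} since $K\la^p\ge a(z_0)\la^q$), one gets for every $\theta\in(\theta_0,1)$
\[
\La\le c\Bigl(\fiint_{Q_{2\rho}^\la(z_0)}(|\na u|^p+a(z)|\na u|^q)^\theta\,dz\Bigr)^{1/\theta}+c\fiint_{Q_{2\rho}^\la(z_0)}(|F|^p+a(z)|F|^q)\,dz,
\]
with $c=c(\data)$ and $\theta_0=\theta_0(n,p,q)$ the constants of Lemma~\ref{sec5:lem:3}.

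Then I would truncate. Splitting each average on the right into its restriction to the relevant super-level set (from \eqref{sec2:2}) and its restriction to the complement, using that on $Q_{2\rho}^\la(z_0)\setminus E(\ka\La)$ one has $|\na u|^p+a(z)|\na u|^q\le\ka\La$ and on $Q_{2\rho}^\la(z_0)\setminus\Phi(\ka\La)$ one has $|F|^p+a(z)|F|^q\le\ka\La$, together with the quasi-triangle inequality for the exponent $1/\theta$ (whose constant is at most $2^{1/\theta_0}$ because $\theta\ge\theta_0$), I obtain
\begin{align*}
	\La&\le c\Bigl(\fiint_{Q_{2\rho}^\la(z_0)\cap E(\ka\La)}(|\na u|^p+a(z)|\na u|^q)^\theta\,dz\Bigr)^{1/\theta}\\
	&\qquad+c\fiint_{Q_{2\rho}^\la(z_0)\cap\Phi(\ka\La)}(|F|^p+a(z)|F|^q)\,dz+c_*\ka\La,
\end{align*}
with $c,c_*=c(\data)$. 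Fixing $\ka=\ka(\data)$ so small that $c_*\ka\le\tfrac12$ and absorbing removes the last term. Since the sum of the two remaining terms is then at least $\La$, at least one of them is at least $\tfrac12\La$: if the gradient term is, raising to the power $\theta$ yields $\La^\theta\le c\fiint_{Q_{2\rho}^\la(z_0)\cap E(\ka\La)}(|\na u|^p+a(z)|\na u|^q)^\theta\,dz$, hence $\La\le c\La^{1-\theta}\fiint_{Q_{2\rho}^\la(z_0)\cap E(\ka\La)}(|\na u|^p+a(z)|\na u|^q)^\theta\,dz$, and substituting into the bound from the first step produces the first term of the claim; if instead the source term is, then $\La\le c\fiint_{Q_{2\rho}^\la(z_0)\cap\Phi(\ka\La)}(|F|^p+a(z)|F|^q)\,dz$ and substituting produces the second term. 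In either case the estimate holds with $c^{-1}=\ka$.

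The only step requiring genuine care is the truncation: the threshold $\ka\La$ must be chosen so that the sublevel contributions can be absorbed, which forces one to keep all relevant constants — in particular the $2^{1/\theta}$-type factors arising from quasi-triangle inequalities with exponent $1/\theta>1$ — under control uniformly for $\theta\in(\theta_0,1)$; this is possible precisely because $\theta_0$ depends only on $n,p,q$. Everything else is bookkeeping of the dependence of constants on $\data$ and of the scaling identity $|Q_{\cv(2\rho)}^\la(z_0)|=\cv^{n+2}|Q_{2\rho}^\la(z_0)|$, with the substantive analytic input entirely contained in Lemma~\ref{sec5:lem:3}.
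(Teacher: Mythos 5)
Your proof is correct and follows essentially the same route as the paper: Lemma~\ref{sec5:lem:3} combined with the second and third conditions in \eqref{sec5:1}, followed by a truncation at a level comparable to $\La$ and the observation $\La\le (1+K)\la^p$. The only cosmetic differences are that you bound the outer-cylinder integral by $c\,\La\,|Q_{2\rho}^\la(z_0)|$ up front and dispose of the exponent $1/\theta$ by a dichotomy at the end, whereas the paper first linearizes the reverse-H\"older term via $\bigl(\fiint f^\theta\bigr)^{1/\theta}\le\la^{p(1-\theta)}\fiint f^\theta$ (a consequence of the second condition in \eqref{sec5:1}) and only afterwards converts levels $\sim\la^p$ into levels $\sim\La$.
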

\begin{proof}
	The second condition in \eqref{sec5:1} implies that 
	\begin{align*}
			\left(\fiint_{Q_{2\rho}^\la(z_0)}H(z,|\na u|)^\theta \,dz\right)^\frac{1}{\theta }
   \le \la^{p(1-\theta)}\fiint_{Q_{2\rho}^\la(z_0)}H(z,|\na u|)^\theta \,dz.
	\end{align*}
     By representing $Q_{2\rho}^\la(z_0)$ as a union of $Q_{2\rho}^\la(z_0)\cap \Psi((4c)^{-1/\theta}\la^p)$ and  $Q_{2\rho}^\la(z_0)\setminus \Psi((4c)^{-1/\theta}\la^p)$ , we have
     \begin{align*}
     		&\left(\fiint_{Q_{2\rho}^\la(z_0)}H(z,|\na u|)^\theta \,dz\right)^\frac{1}{\theta }
     		\le \frac{1}{4c}\la^p +\frac{\la^{p(1-\theta)}}{|Q_{2\rho}^\la|}\iint_{Q_{2\rho}^\la(z_0)\cap \Psi((4c)^{-1/\theta}\la^p)}H(z,|\na u|)^{\theta }\,dz,
     \end{align*}
     for any $c > 0$. 
     A similar argument gives
     \begin{align*}
         \fiint_{Q_{2\rho}^\la(z_0)}H(z,|F|)\,dz\le \frac{1}{4c}\la^p+\iint_{Q_{2\rho}^\la(z_0)\cap \Phi((4c)^{-1}\la^p)}H(z,|F|)\,dz.
     \end{align*}     
    It follows from Lemma~\ref{sec5:lem:3} that
	\begin{align*}
		\begin{split}
			&\fiint_{Q_{\rho}^\la(z_0)}(H(z,|\na u|)+H(z,|F|))\,dz
			\le \frac{1}{2}\la^p+\frac{c\la^{p(1-\theta)}}{|Q_{2\rho}^\la|}\iint_{Q_{2\rho}^\la(z_0)\cap \Psi((4c)^{-1/\theta}\la^p)}H(z,|\na u|)^{\theta }\,dz\\
			&\qquad\qquad\qquad +\frac{c}{|Q_{2\rho}^\la|}\iint_{Q_{2\rho}^\la(z_0)\cap \Phi((4c)^{-1}\la^p)}H(z,|F|)\,dz.
		\end{split}
	\end{align*}
	By recalling the second and third conditions in \eqref{sec5:1}, we obtain
	\begin{align*}
		\begin{split}
			&\fiint_{Q_{2\cv\rho}^\la(z_0)}(H(z,|\na u|)+H(z,|F|))\,dz
			\le 2c\frac{\la^{p(1-\theta)}}{|Q_{2\rho}^\la|}\iint_{Q_{2\rho}^\la(z_0)\cap \Psi((4c)^{-1/\theta}\la^p)}H(z,|\na u|)^{\theta }\,dz\\
			&\qquad\qquad\qquad +\frac{2c}{|Q_{2\rho}^\la|}\iint_{Q_{2\rho}^\la(z_0)\cap \Phi((4c)^{-1}\la^p)}H(z,|F|)\,dz.
		\end{split}
	\end{align*}
    Thus, we have
    \begin{align}\label{sec5:6}
    	\begin{split}
    		&\iint_{Q_{2\cv\rho}^\la(z_0)}H(z,|\na u|)\,dz
    		\le 2c\la^{p(1-\theta)}\iint_{Q_{2\rho}^\la(z_0)\cap \Psi((4c)^{-1/\theta}\la^p)}H(z,|\na u|)^{\theta }\,dz\\
    		&\qquad\qquad\qquad +2c\iint_{Q_{2\rho}^\la(z_0)\cap \Phi((4c)^{-1}\la^p)}H(z,|F|)\,dz.
    	\end{split}
    \end{align}
    
    We note that
    \begin{align*}
    	\frac{\la^p}{4c}\ge \frac{\la^p}{(4c)^{1/\theta}}\ge \frac{\la^p}{(4c)^{1/\theta_0}}\ge \frac{\la^p+a(z_0)\la^q}{2K(4c)^{1/\theta_0}}=\frac{\La}{2K(4c)^{1/\theta_0}},
    \end{align*}
    where we applied the first condition in \eqref{sec5:1}. The estimate above implies that 
    \begin{align*}
    		\Psi((4c)^{-1/\theta}\la^p)\subset \Psi((2K(4c)^{1/\theta_0})^{-1}\La)
      \quad\text{and}\quad
    		\Phi((4c)^{-1}\la^p)\subset \Phi((2K(4c)^{1/\theta_0})^{-1}\La).
    \end{align*}
    Therefore, by replacing $2K(4c)^{1/\theta_0}$ with $c$, \eqref{sec5:6} can be written as
    \begin{align*}
    	\begin{split}
    		&\iint_{Q_{2\cv\rho}^\la(z_0)}H(z,|\na u|)\,dz
    		\le c\La^{1-\theta}\iint_{Q_{2\rho}^\la(z_0)\cap \Psi(c^{-1}\La)}H(z,|\na u|)^{\theta }\,dz\\
    		&\qquad\qquad\qquad +c\iint_{Q_{2\rho}^\la(z_0)\cap \Phi(c^{-1}\La)}H(z,|F|)\,dz.
    	\end{split}
    \end{align*}
    This completes the proof.
\end{proof}

\subsection{The $(p,q)$-intrinsic case}
In this case we consider estimates in $(p,q)$-intrinsic cylinders as in \eqref{sec4:32} and assume that \eqref{sec5:2} holds.
We remark that constants in the estimates depend only on $n,N,p,q,\nu,L$ since \eqref{sec1:1} reduces to a parabolic $(p,q)$-Laplace system in $G_{2\cv\rho}^\la(z_0)$. 
We denote
	\begin{align*}
		S(u,G_{\rho}^\la(z_0))=\sup_{J_{\rho}^\la(t_0)}\fint_{B_{\rho}(x_0)}\frac{|u-u_{G_{\rho}^\la(z_0)}|^2}{\rho^2}\,dx.
	\end{align*}
 
\begin{lemma}\label{sec5:lem:5}
	Let $u$ be a weak solution to \eqref{sec1:1}. Then there exists a constant $c=c(n,N,p,q,\nu,L)$ such that 
	\begin{align*}
		S(u,G_{2\rho}^\la(z_0))=\sup_{J_{2\rho}^\la(t_0)}\fint_{B_{2\rho}(x_0)}\frac{|u-u_{G_{2\rho}^\la(z_0)}|^2}{(2\rho)^2}\,dx\le c\la^2,
	\end{align*}	
 whenever $G_{2\cv\rho}^{\la}(z_0)\subset\Omega_T$ satisfies \eqref{sec5:2}.
\end{lemma}
\begin{proof}
	Let $2\rho\le \rho_1<\rho_2\le 4\rho$. By Lemma~\ref{sec3:lem:1}, there exists a constant $c=c(n,p,q,\nu,L)$ such that
 \begin{equation}\label{sec5:61}
		\begin{split}
			&\frac{H_{z_0}(\la)}{\la^2}\sup_{J_{\rho_1}^\la(t_0)}\fint_{B_{\rho_1}(x_0)}\frac{|u-u_{G_{\rho_1}^\la(z_0)}|^2}{\rho_1^2}\,dx\\
			&\qquad\le \frac{c\rho_2^q}{(\rho_2-\rho_1)^q}\fiint_{G_{\rho_2}^\la(z_0)}\left(\frac{|u-u_{G_{\rho_2}^\la(z_0)}|^p}{\rho_2^p}+a(z)\frac{|u-u_{G_{\rho_2}^\la(z_0)}|^q}{\rho_2^q}\right)\,dz\\
			&\qquad\qquad+\frac{c \rho_2^2}{(\rho_2-\rho_1)^2}\frac{H_{z_0}(\la)}{\la^2}\fiint_{G_{\rho_2}^\la(z_0)}\frac{|u-u_{G_{\rho_2}^\la(z_0)}|^2}{\rho_2^2}\,dz
			+c\fiint_{G_{\rho_2}^\la(z_0)}H(z,|F|)\,dz.
		\end{split}
  \end{equation}
	
	For the first term on the right-hand side of \eqref{sec5:61}, we apply Lemma~\ref{sec4:lem:5} together with the second and third conditions in \eqref{sec5:2} to obtain
	\begin{align*}
		\begin{split}
			&\fiint_{G_{\rho_2}^\la(z_0)}\left(\frac{|u-u_{G_{\rho_2}^\la(z_0)}|^p}{\rho_2^p}+a(z)\frac{|u-u_{G_{\rho_2}^\la(z_0)}|^q}{\rho_2^q}\right)\,dz\\
			&\qquad\le 2\fiint_{G_{\rho_2}^\la(z_0)}H_{z_0}\left(\frac{|u-u_{G_{\rho_2}^\la(z_0)}|}{\rho_2}\right)\,dz\\
   &\qquad\le c\fiint_{Q_{\rho_2}^\la(z_0)}H_{z_0}(|\na u|+|F|)\,dz\le cH_{z_0}(\la),
		\end{split}
	\end{align*}
    where $c=c(n,N,p,q,L)$.
    
	For the second term on the right-hand side of \eqref{sec5:61}, as in the proof of Lemma~\ref{sec5:lem:1}, we obtain
	\begin{align*}
		\begin{split}
			&\fiint_{G_{\rho_2}^\la(z_0)}\frac{|u-u_{G_{\rho_2}^\la(z_0)}|^2}{\rho_2^2}\,dz\\
			&\qquad\le c\left(\fiint_{G_{\rho_2}^\la(z_0)}\left(\frac{|u-u_{G_{\rho_2}^\la(z_0)}|^p}{\rho_2^p}+|\na u|^p\right)\,dz\right)^\frac{1}{p}\left(S(u,G_{\rho_2}^\la(z_0))\right)^\frac{1}{2},
		\end{split}
	\end{align*}
 where $c=c(n,N,p)$.
	By using Lemma~\ref{sec4:lem:6} and \eqref{sec4:5}, we obtain
	\begin{align*}
		\fiint_{G_{\rho_2}^\la(z_0)}\frac{|u-u_{G_{\rho_2}^\la(z_0)}|^2}{\rho_2^2}\,dz\le c\la S(u,G_{\rho_2}^\la(z_0))^\frac{1}{2},
	\end{align*}
 where $c=c(n,N,p,q,L)$.
	By combining estimates and arguing as in the proof of Lemma~\ref{sec5:lem:1}, we have
	\begin{align*}
		\begin{split}
			S(u,G_{\rho_1}^\la(z_0))
			\le& \frac{1}{2}S(u,G_{\rho_2}^\la(z_0))+c\left(\frac{\rho_2^q}{(\rho_2-\rho_1)^q}+\frac{ \rho_2^4}{(\rho_2-\rho_1)^4}\right)\la^2.
		\end{split}
	\end{align*}
	The conclusion follows by applying Lemma~\ref{sec2:lem:2}.
\end{proof}

\begin{lemma}\label{sec5:lem:7}
	Let $u$ be a weak solution to \eqref{sec1:1}. Then there exists a constant $c=c(n,p,q)$ such that for any $\theta\in(n/(n+2),1)$,
	\begin{align*}
		\begin{split}
			&\fiint_{G_{2\rho}^\la(z_0)}\left(\frac{|u-u_{G_{2\rho}^\la(z_0)}|^p}{(2\rho)^p}+a(z)\frac{|u-u_{G_{2\rho}^\la(z_0)}|^q}{(2\rho)^q}\right)\,dz\\
			&\qquad\le  c\fiint_{G_{2\rho}^\la(z_0)}\left(H^{\theta}_{z_0}\left(\frac{|u-u_{G_{2\rho}^\la(z_0)}|}{2\rho}\right)+H_{z_0}^{\theta}(|\na u|)\right)\,dz\,H_{z_0}^{1-\theta}\left(S(u,G_{2\rho}^\la(z_0))^{\frac{1}{2}}\right),
		\end{split}
	\end{align*}
  whenever $G_{2\cv\rho}^{\la}(z_0)\subset\Omega_T$ satisfies \eqref{sec5:2}.
\end{lemma}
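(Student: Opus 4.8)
The plan is to bound the left-hand side by an integral of $g_{z_0}$ evaluated at $|u-(u)_{G_{2\rho}^\la(z_0)}|/(2\rho)$, to split $g_{z_0}$ into its two power pieces, and to apply the Gagliardo--Nirenberg inequality (Lemma~\ref{sec2:lem:1}) slice by slice in time. Write $v:=u-(u)_{G_{2\rho}^\la(z_0)}$, so that $\na v=\na u$, and abbreviate $S:=S(u)_{z_0;2\rho}^{g(\la)}$. Since $G_{2\rho}^\la(z_0)\subset G_{4\rho}^\la(z_0)$, the second condition in \eqref{sec5:2} gives $a(z)\le 2a(z_0)$ on $G_{2\rho}^\la(z_0)$, hence the left-hand side is bounded by $2\fiint_{G_{2\rho}^\la(z_0)}g_{z_0}(|v|/(2\rho))\,dz$, which splits into the $p$-piece $2\fiint_{G_{2\rho}^\la(z_0)}(2\rho)^{-p}|v|^p\,dz$ and the $q$-piece $2a(z_0)\fiint_{G_{2\rho}^\la(z_0)}(2\rho)^{-q}|v|^q\,dz$.

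For the $p$-piece I would apply Lemma~\ref{sec2:lem:1} on each ball $B_{2\rho}(x_0)$ with $\sig=p$, $s=\theta p$, $r=2$, $\vartheta=\theta$; the admissibility condition then reduces exactly to $\theta\ge n/(n+2)$, the first exponent is $\vartheta\sig/s=1$, and $\theta p\ge 2n/(n+2)\ge1$. Integrating over $t\in J_{2\rho}^\la(t_0)$ and estimating the spatial $L^2$-factor by $S$ yields, with $c=c(n,p)$,
\[
\fiint_{G_{2\rho}^\la(z_0)}\frac{|v|^p}{(2\rho)^p}\,dz\le c\fiint_{G_{2\rho}^\la(z_0)}\Bigl(\frac{|v|^{\theta p}}{(2\rho)^{\theta p}}+|\na u|^{\theta p}\Bigr)\,dz\;S^{\frac{(1-\theta)p}{2}}.
\]
The identical argument for the $q$-piece with $\sig=q$, $s=\theta q$, $r=2$, $\vartheta=\theta$ (again admissible precisely for $\theta\ge n/(n+2)$, with $\theta q\ge1$) gives the analogue with $p$ replaced by $q$ and the prefactor $a(z_0)$ kept in front, with $c=c(n,q)$.

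Finally I would repackage both estimates in terms of $g_{z_0}^\theta$ and $g_{z_0}^{1-\theta}$ using the elementary bounds $s^{\theta p}\le g_{z_0}^\theta(s)$ and $a(z_0)^\theta s^{\theta q}\le g_{z_0}^\theta(s)$ (from $g_{z_0}(s)\ge s^p$ and $g_{z_0}(s)\ge a(z_0)s^q$), together with $S^{(1-\theta)p/2}=(S^{p/2})^{1-\theta}\le g_{z_0}^{1-\theta}(S^{1/2})$ and $a(z_0)^{1-\theta}S^{(1-\theta)q/2}=(a(z_0)S^{q/2})^{1-\theta}\le g_{z_0}^{1-\theta}(S^{1/2})$. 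Applying these to the $p$-piece directly, and to the $q$-piece after writing $a(z_0)=a(z_0)^{1-\theta}a(z_0)^\theta$, then summing the two, gives the claimed inequality with $c=c(n,p,q)$. There is no real obstacle: the proof is bookkeeping once the exponents are set. The only point deserving attention is that the choice $\vartheta=\theta$, $s=\theta\sig$ in Lemma~\ref{sec2:lem:1} collapses the admissibility condition to $\theta\ge n/(n+2)$ for both pieces — the $q$-piece can afford the Sobolev exponent $\theta q$ here because the right-hand side is allowed to contain $g_{z_0}^\theta$ (hence $a(z_0)^\theta|v|^{\theta q}$) — which is exactly what permits the stated range $\theta\in(n/(n+2),1)$, larger than the range forced by the exponent choices in the Case~\eqref{sec5:1} counterpart, Lemma~\ref{sec5:lem:2}.
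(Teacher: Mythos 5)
Your proposal is correct and follows essentially the same route as the paper's proof: bound $a(z)$ by $2a(z_0)$ via the second condition in \eqref{sec5:2}, apply Lemma~\ref{sec2:lem:1} separately to the $p$- and $q$-pieces with $s=\theta\sigma$, $r=2$, $\vartheta=\theta$ (so both admissibility conditions collapse to $\theta\ge n/(n+2)$), and repackage with the elementary bounds $s^{\theta p}\le g_{z_0}^{\theta}(s)$, $a(z_0)^{\theta}s^{\theta q}\le g_{z_0}^{\theta}(s)$ and their $(1-\theta)$-power analogues for the $S$-factor. Your closing observation about why the admissible range of $\theta$ here is wider than in Lemma~\ref{sec5:lem:2} matches the structure of the paper's argument.
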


\begin{proof}
	From the second condition in \eqref{sec5:2}, we obtain
	\begin{align*}
		\begin{split}
			&\fiint_{G_{2\rho}^\la(z_0)}\left(\frac{|u-u_{G_{2\rho}^\la(z_0)}|^p}{(2\rho)^p}+a(z)\frac{|u-u_{G_{2\rho}^\la(z_0)}|^q}{(2\rho)^q}\right)\,dz\\
			&\qquad\le2\fiint_{G_{2\rho}^\la(z_0)}\left(\frac{|u-u_{G_{2\rho}^\la(z_0)}|^p}{(2\rho)^p}+a(z_0)\frac{|u-u_{G_{2\rho}^\la(z_0)}|^q}{(2\rho)^q}\right)\,dz.
		\end{split}
	\end{align*}
	By Lemma~\ref{sec2:lem:1}, there exists a constant $c=c(n,p,q)$ such that for any $\theta\in(n/(n+2),1)$, we have
	\begin{align*}
		\begin{split}
			&\fiint_{G_{2\rho}^\la(z_0)}\frac{|u-u_{G_{2\rho}^\la(z_0)}|^p}{(2\rho)^p}\,dz\\
			&\qquad\le
			c\fiint_{G_{2\rho}^\la(z_0)}\left(\frac{|u-u_{G_{2\rho}^\la(z_0)}|^{\theta p}}{(2\rho)^{\theta p}}+|\na u|^{\theta p}\right)\,dz\left(S(u,G_{2\rho}^\la(z_0))^\frac{1}{2}\right)^{(1-\theta)p}
		\end{split}
	\end{align*}
	and
	\begin{align*}
		\begin{split}
			&\fiint_{G_{2\rho}^\la(z_0)}a(z_0)\frac{|u-u_{G_{2\rho}^\la(z_0)}|^q}{(2\rho)^q}\,dz\\		
			&\le
			c\fiint_{G_{2\rho}^\la(z_0)}\left(a(z_0)^{\theta}\frac{|u-u_{G_{2\rho}^\la(z_0)}|^{\theta q}}{(2\rho)^{\theta q}}+a(z_0)^{\theta}|\na u|^{\theta q}\right)\,dz\
			 a(z_0)^{1-\theta}\left(S(u,G_{2\rho}^\la(z_0))^\frac{1}{2}\right)^{(1-\theta)q}.
		\end{split}
	\end{align*}
	Thus we conclude that
	\begin{align*}
		\begin{split}
			&\fiint_{G_{2\rho}^\la(z_0)}\left(\frac{|u-u_{G_{2\rho}^\la(z_0)}|^p}{(2\rho)^p}+a(z)\frac{|u-u_{G_{2\rho}^\la(z_0)}|^q}{(2\rho)^q}\right)\,dz\\
			&\qquad\le  c\fiint_{G_{2\rho}^\la(z_0)}H^{\theta}_{z_0}\left(\frac{|u-u_{G_{2\rho}^\la(z_0)}|}{2\rho}\right)+H_{z_0}^{\theta}(|\na u|)\,dz\\
			&\qquad\qquad \times \left(\left(S(u,G_{2\rho}^\la(z_0))^\frac{1}{2}\right)^{p}+a(z_0)\left(S(u,G_{2\rho}^\la(z_0))^\frac{1}{2}\right)^{q}\right)^{1-\theta }.
		\end{split}
	\end{align*}
	This completes the proof.	
\end{proof}

\begin{lemma}\label{sec5:lem:6}
	Let $u$ be a weak solution to \eqref{sec1:1}. Then there exist constants $c=c(n,N,p,q,\nu,L)$ and $\theta_0=\theta_0(n,p,q)\in(0,1)$ such that for any $\theta\in(\theta_0,1)$,
	\begin{align*}
			\fiint_{G_{\rho}^\la(z_0)}H_{z_0}(|\na u|+|F|)\,dz
			\le c\left(\fiint_{G_{2\rho}^\la(z_0)}H_{z_0}^{\theta}(|\na u|)\,dz\right)^\frac{1}{\theta }+c\fiint_{G_{2\rho}^\la(z_0)} H_{z_0}(|F|)\,dz,
	\end{align*}
 whenever $G_{2\cv\rho}^{\la}(z_0)\subset\Omega_T$ satisfies \eqref{sec5:2}.
	Moreover, we have
	\begin{align*}
		\begin{split}
			&\iint_{G_{2\cv\rho}^\la(z_0)}H(z,|\na u|)\,dz
			\le c\La^{1-\theta}\iint_{G_{2\rho}^\la(z_0)\cap \Psi(c^{-1}\La)}H(z,|\na u|)^\theta\,dz\\
			&\qquad\qquad\qquad+c\iint_{G_{2\rho}^\la(z_0)\cap \Phi(c^{-1}\La)}H(z,|F|)\,dz,
		\end{split}
	\end{align*}
	where $\Psi(\La)$ and $\Phi(\La)$ are as in \eqref{sec2:2} and \eqref{sec2:3}.
	
\end{lemma}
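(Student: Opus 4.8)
\emph{Strategy.} The statement is the $(p,q)$-intrinsic counterpart of Lemmas~\ref{sec5:lem:3} and~\ref{sec5:lem:4}, and the plan is to repeat those two arguments in the $(p,q)$-intrinsic geometry, replacing $Q_\rho^\la(z_0)$, $I_\rho^\la(t_0)$ and $\la^p$ by $G_\rho^\la(z_0)$, $J_\rho^\la(t_0)$ and $g_{z_0}(\la)$, and using Lemmas~\ref{sec5:lem:5}, \ref{sec5:lem:7}, \ref{sec4:lem:5} and~\ref{sec4:lem:6} in place of Lemmas~\ref{sec5:lem:1}, \ref{sec5:lem:2}, \ref{sec4:lem:2} and~\ref{sec4:lem:3}. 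A feature of the present case is that the coefficient comparison $\tfrac{a(z_0)}{2}\le a(z)\le 2a(z_0)$ on $G_{4\rho}^\la(z_0)$ is part of hypothesis~\eqref{sec5:2}: thus $g_{z_0}(|\na u|)$ and $g_{z_0}(|F|)$ are comparable to $|\na u|^p+a(z)|\na u|^q$ and $|F|^p+a(z)|F|^q$ on $G_{4\rho}^\la(z_0)$, the second condition in~\eqref{sec5:2} becomes $\fiint_{G_{2\rho}^\la(z_0)}g_{z_0}(|\na u|)\,dz\le c\,g_{z_0}(\la)$, the third becomes $\fiint_{G_\rho^\la(z_0)}g_{z_0}(|\na u|+|F|)\,dz\simeq g_{z_0}(\la)$, and since $K>1$ forces $a(z_0)\la^q\le g_{z_0}(\la)\le 2a(z_0)\la^q$, no dependence on $[a]_\alpha$, $\alpha$, $M_1$, $M_2$ or $\diam(\Om)$ enters the constants.

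\emph{First estimate.} I would start from the Caccioppoli inequality (Lemma~\ref{sec3:lem:1}) applied on $G_\rho^\la(z_0)\subset G_{2\rho}^\la(z_0)$: here $R-r=\rho$ and the intrinsic time length makes $S-s$ comparable to $\tfrac{\la^2}{g_{z_0}(\la)}\rho^2$, so after the coefficient comparison its left-hand side controls $\fiint_{G_\rho^\la(z_0)}g_{z_0}(|\na u|)\,dz$ while its right-hand side is bounded by a constant times the sum of a ``spatial'' term $\fiint_{G_{2\rho}^\la(z_0)}\bigl(\tfrac{|u-(u)_{z_0;2\rho}^{g(\la)}|^p}{(2\rho)^p}+a(z)\tfrac{|u-(u)_{z_0;2\rho}^{g(\la)}|^q}{(2\rho)^q}\bigr)\,dz$, a ``time'' term $\tfrac{g_{z_0}(\la)}{\la^2}\fiint_{G_{2\rho}^\la(z_0)}\tfrac{|u-(u)_{z_0;2\rho}^{g(\la)}|^2}{(2\rho)^2}\,dz$ and a ``source'' term $\fiint_{G_{2\rho}^\la(z_0)}g_{z_0}(|F|)\,dz$. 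The spatial term I would estimate by Lemma~\ref{sec5:lem:7}, using Lemma~\ref{sec5:lem:5} to bound $g_{z_0}\bigl([S(u)_{z_0;2\rho}^{g(\la)}]^{1/2}\bigr)^{1-\theta}\le c\,g_{z_0}(\la)^{1-\theta}$, followed by Lemma~\ref{sec4:lem:5}, arriving at $c\,g_{z_0}(\la)^{1-\theta}\bigl[\fiint_{G_{2\rho}^\la(z_0)}g_{z_0}^\theta(|\na u|)\,dz+(\fiint_{G_{2\rho}^\la(z_0)}g_{z_0}(|F|)\,dz)^\theta\bigr]$, which by Young's inequality of the form $g_{z_0}(\la)^{1-\theta}Y\le\epsilon g_{z_0}(\la)+c_\epsilon Y^{1/\theta}$ reduces to an absorbable $\epsilon g_{z_0}(\la)$ plus good terms. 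For the time term I would follow the proof of Lemma~\ref{sec5:lem:3}: split the $L^2$-norm symmetrically, apply the Gagliardo--Nirenberg inequality (Lemma~\ref{sec2:lem:1}) in the spatial variable with the exponent $\theta q$ rather than $\theta p$, use H\"older in time and Lemma~\ref{sec5:lem:5} in the form $[S(u)_{z_0;2\rho}^{g(\la)}]^{1/2}\le c\la$, and then Lemma~\ref{sec4:lem:5} together with $g_{z_0}(s)\ge a(z_0)s^q$; the choice of the exponent $\theta q$ lets the resulting gradient integral be compared to $\fiint_{G_{2\rho}^\la(z_0)}g_{z_0}^\theta(|\na u|)\,dz$ up to a power of $a(z_0)$, and using $g_{z_0}(\la)\simeq a(z_0)\la^q$ the powers of $\la$ and $a(z_0)$ consolidate so that the time term is bounded by $c\,g_{z_0}(\la)^{1-\sigma}(\fiint_{G_{2\rho}^\la(z_0)}g_{z_0}^\theta(|\na u|)\,dz)^{\sigma/\theta}$ (plus the analogous $|F|$-term) for some $\sigma=\sigma(n,p,q)\in(0,1)$; a further Young's inequality again produces $\epsilon g_{z_0}(\la)$ plus good terms. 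Collecting the three contributions, choosing $\epsilon$ small and absorbing $\tfrac12 g_{z_0}(\la)$ through the third condition in~\eqref{sec5:2} gives $\fiint_{G_\rho^\la(z_0)}g_{z_0}(|\na u|)\,dz\le c(\fiint_{G_{2\rho}^\la(z_0)}g_{z_0}^\theta(|\na u|)\,dz)^{1/\theta}+c\fiint_{G_{2\rho}^\la(z_0)}g_{z_0}(|F|)\,dz$; adding $c\fiint_{G_{2\rho}^\la(z_0)}g_{z_0}(|F|)\,dz$ to both sides and using the triangle inequality yields the first claim.

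\emph{Second estimate.} This I would deduce exactly as Lemma~\ref{sec5:lem:4}. The second condition in~\eqref{sec5:2} gives $(\fiint_{G_{2\rho}^\la(z_0)}g_{z_0}^\theta(|\na u|)\,dz)^{1/\theta}\le g_{z_0}(\la)^{1-\theta}\fiint_{G_{2\rho}^\la(z_0)}g_{z_0}^\theta(|\na u|)\,dz$; splitting this average into its part over $E(\de\,g_{z_0}(\la))$ for a small $\de=\de(n,N,p,q,\nu,L)$, inserting it into the first claim, and using $|G_{\cv(2\rho)}^\la(z_0)|=\cv^{\,n+2}|G_{2\rho}^\la(z_0)|$ together with the second condition in~\eqref{sec5:2} at scale $\cv(2\rho)$ to spread the integral from $G_\rho^\la(z_0)$ to $G_{\cv(2\rho)}^\la(z_0)$, I obtain $\iint_{G_{\cv(2\rho)}^\la(z_0)}g_{z_0}(|\na u|)\,dz\le c\,g_{z_0}(\la)^{1-\theta}\iint_{G_{2\rho}^\la(z_0)\cap E(\de\,g_{z_0}(\la))}g_{z_0}^\theta(|\na u|)\,dz+c\iint_{G_{2\rho}^\la(z_0)\cap\Phi(\de\,g_{z_0}(\la))}g_{z_0}(|F|)\,dz$. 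Finally, $g_{z_0}(\la)=\La$ by~\eqref{sec4:2}, and the coefficient comparison lets me replace $g_{z_0}^\theta(|\na u|)$ and $g_{z_0}(|F|)$ by $(|\na u|^p+a(z)|\na u|^q)^\theta$ and $|F|^p+a(z)|F|^q$ on $G_{2\rho}^\la(z_0)\subset G_{4\rho}^\la(z_0)$; relabelling $\de^{-1}$ as $c$ gives the stated inequality, with $E(\La)$ and $\Phi(\La)$ from~\eqref{sec2:2}.

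\emph{Main obstacle.} The delicate point is the time term. Its prefactor $\tfrac{g_{z_0}(\la)}{\la^2}=\la^{p-2}+a(z_0)\la^{q-2}$ is large in the $(p,q)$-phase, so one cannot merely bound $\fiint_{G_{2\rho}^\la(z_0)}\tfrac{|u-(u)_{z_0;2\rho}^{g(\la)}|^2}{(2\rho)^2}\,dz$ by $c\la^2$ and absorb; instead one must route the Poincar\'e step through the $\theta q$-Gagliardo--Nirenberg inequality so that the $a(z_0)\la^q$ component of $g_{z_0}(\la)$ is recovered, and then verify --- using $g_{z_0}(\la)\simeq a(z_0)\la^q$, which holds precisely because of the case hypothesis $K\la^p\le a(z_0)\la^q$ in~\eqref{sec5:2} --- that the exponent $\sigma$ appearing in front of the gradient integral is strictly less than $1$, so that Young's inequality produces a genuinely absorbable term. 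Checking that the restrictions on $\theta$ coming from Lemmas~\ref{sec2:lem:1}, \ref{sec4:lem:5}, \ref{sec4:lem:6} and~\ref{sec5:lem:7} and from the H\"older-in-time steps can be met simultaneously fixes $\theta_0=\theta_0(n,p,q)$.
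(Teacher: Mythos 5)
Your proposal is correct and follows essentially the same route as the paper: Caccioppoli, then Lemmas~\ref{sec5:lem:7}, \ref{sec5:lem:5} and~\ref{sec4:lem:5} for the spatial term, a Gagliardo--Nirenberg/Poincar\'e treatment of the time term that recovers the $a(z_0)\la^q$ component before applying Young's inequality and absorbing via the stopping-time equality in~\eqref{sec5:2}, and the second estimate deduced exactly as in Lemma~\ref{sec5:lem:4}. The only cosmetic difference is that the paper runs the time term through the $\theta p$-Gagliardo--Nirenberg inequality and Lemma~\ref{sec4:lem:6}, then splits $g_{z_0}(\la)/\la=\la^{p-1}+a(z_0)\la^{q-1}$ and passes to the $\theta q$ average by H\"older for the second piece, whereas you invoke the $\theta q$ exponent directly; both give the same absorbable bound.
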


\begin{proof}
	Once the first estimate in the statement holds, then the second estimate follows as in the proof of Lemma~\ref{sec5:lem:4}.
	
	To prove the first estimate in the statement, we apply Lemma~\ref{sec3:lem:1} to obtain
    \begin{equation}\label{sec5:62}
		\begin{split}
			&\fiint_{G_{\rho}^\la(z_0)}H(z,|\na u|)\,dz
			\le c\fiint_{G_{2\rho}^\la(z_0)}\left(\frac{|u-u_{G_{2\rho}^\la(z_0)}|^p}{(2\rho)^p}+a(z_0)\frac{|u-u_{G_{2\rho}^\la(z_0)}|^q}{(2\rho)^q}\right)\,dz\\
			&\qquad\qquad+c\frac{H_{z_0}(\la)}{\la^2}\fiint_{G_{2\rho}^\la(z_0)}\frac{|u-u_{G_{2\rho}^\la(z_0)}|^2}{(2\rho)^2}\,dz
			+c\fiint_{G_{2\rho}^\la(z_0)}H(z,|F|)\,dz.
		\end{split}
	\end{equation}
	
	Using Lemma~\ref{sec5:lem:7}, Lemma~\ref{sec4:lem:5} and Lemma~\ref{sec5:lem:5} for the first term on the right-hand side of \eqref{sec5:62}, we obtain
	\begin{align*}
		\begin{split}
			&\fiint_{G_{2\rho}^\la(z_0)}\left(\frac{|u-u_{G_{2\rho}^\la(z_0)}|^p}{(2\rho)^p}+a(z)\frac{|u-u_{G_{2\rho}^\la(z_0)}|^q}{(2\rho)^q}\right)\,dz\\
			&\qquad\le c\fiint_{G_{2\rho}^\la(z_0)}H^\theta_{z_0}(|\na u|)\,dz\,H_{z_0}^{1-\theta}(\la)
			+ c\left( \fiint_{G_{2\rho}^\la(z_0)} H_{z_0}(|F|)\,dz \right)^\theta H_{z_0}^{1-\theta}(\la).
		\end{split}
	\end{align*}
    As in the proof of Lemma~\ref{sec5:lem:3}, we obtain
	\begin{align*}
			\fiint_{G_{2\rho}^\la(z_0)}\frac{|u-u_{G_{2\rho}^\la(z_0)}|^2}{(2\rho)^2}\,dz
			\le c\la\left(\fiint_{G_{2\rho}^\la(z_0)}\left(\frac{|u-u_{G_{2\rho}^\la(z_0)}|^{\theta p}}{(2\rho)^p}+|\na u|^{\theta p}\right)\,dz\right)^\frac{1}{\theta p}
	\end{align*}
	and from Lemma~\ref{sec4:lem:6} we conclude that
	\begin{align*}
		\fiint_{G_{2\rho}^\la(z_0)}\frac{|u-u_{G_{2\rho}^\la(z_0)}|^{\theta p}}{(2\rho)^p}\,dz\le c\fiint_{G_{2\rho}^\la(z_0)}|\na u|^{\theta p}\,dz+c\left(\fiint_{G_{2\rho}^{\la}(z_0)}|F|^p\,dz\right)^{\theta }.
	\end{align*}
 
    For the second term on the right-hand side of \eqref{sec5:62}, we have
    \begin{align*}
    	\begin{split}
    		&\frac{H_{z_0}(\la)}{\la^2} \fiint_{G_{2\rho}^\la(z_0)}\frac{|u-u_{G_{2\rho}^\la(z_0)}|^2}{(2\rho)^2}\,dz
    		\le \frac{H_{z_0}(\la)}{\la}\left(\fiint_{G_{2\rho}^\la(z_0)}|\na u|^{\theta p}\,dz\right)^\frac{1}{\theta p}\\
    		&\qquad\qquad\qquad+c\frac{H_{z_0}(\la)}{\la}\left(\fiint_{G_{2\rho}^{\la}(z_0)}|F|^p\,dz\right)^{\frac{1}{p}},
    	\end{split}
    \end{align*}
    where
    \begin{align*}
    	\begin{split}
    		&\frac{H_{z_0}(\la)}{\la}\left(\fiint_{G_{2\rho}^\la(z_0)}|\na u|^{\theta p}\,dz\right)^\frac{1}{\theta p}
    		\le \la^{p-1}\left(\fiint_{G_{2\rho}^\la(z_0)}|\na u|^{\theta p}\,dz\right)^\frac{1}{\theta p}\\
    		&\qquad\qquad\qquad+\left(a(z_0)\la^q\right)^\frac{q-1}{q}\left(\fiint_{G_{2\rho}^{\la}(z_0)}a(z_0)^\theta|\na u|^{\theta q}\,dz\right)^\frac{1}{\theta q}.
    	\end{split}
    \end{align*}
    A similar argument for $|F|$ gives
    \begin{align*}
    	\begin{split}
    		&\frac{H_{z_0}(\la)}{\la^2}\fiint_{G_{2\rho}^\la(z_0)}\frac{|u-u_{G_{2\rho}^\la(z_0)}|^2}{(2\rho)^2}\,dz\\
    		&\qquad\le \la^{p-1}\left(\fiint_{G_{2\rho}^\la(z_0)}|\na u|^{\theta p}\,dz\right)^\frac{1}{\theta p}
    		+\left(a(z_0)\la^q\right)^\frac{q-1}{q}\left(\fiint_{G_{2\rho}^{\la}(z_0)}a(z_0)^\theta|\na u|^{\theta q}\,dz\right)^\frac{1}{\theta q}\\
    		 &\qquad\qquad+\la^{p-1}\left(\fiint_{G_{2\rho}^\la(z_0)}|F|^{ p}\,dz\right)^\frac{1}{p}
    		+\left(a(z_0)\la^q\right)^\frac{q-1}{q}\left(\fiint_{G_{2\rho}^{\la}(z_0)}a(z_0)|F|^{q}\,dz\right)^\frac{1}{ q}.
    	\end{split}    	
    \end{align*}

	By collecting all the estimates above and applying Young's inequality together with the second condition in \eqref{sec5:2}, we obtain
	\begin{align*}
		\begin{split}
			&\fiint_{G_{\rho}^\la(z_0)}H(z,|\na u|)\,dz
			\le \frac{1}{2}H_{z_0}(\la)
   +c\left(\fiint_{G_{2\rho}^\la(z_0)}H(z,|\na u|)^{\theta}\,dz\right)^{\frac{1}{\theta}}\\
			&\qquad\qquad\qquad+c\fiint_{G_{2\rho}^\la(z_0)}H(z,|F|)\,dz.
		\end{split}
	\end{align*}
	We use the fourth condition in \eqref{sec5:2} to absorb the first term on the right-hand side. This completes the proof.	
\end{proof}

\section{The proof of Theorem~\ref{main_theorem}}
In this section, we will complete the proof of Theorem~\ref{main_theorem}. We divide the section into three subsections. 
In the first subsection, we construct intrinsic cylinders which are either $p$-intrinsic or $(p,q)$-intrinsic, see \eqref{sec5:1} and \eqref{sec5:2}. In the second subsection, we prove a Vitali type covering property for the system of intrinsic cylinders constructed in the first subsection. Note that the collection consists of two different types of intrinsic cylinders depending on the center point of the cylinders. Finally, in the last subsection, we complete the proof of gradient estimate by applying Fubini's theorem together with Lemma~\ref{sec2:lem:2}.

\subsection{Stopping time argument}\label{stopping time argument}
Let
\begin{equation}\label{sec6:01}
\begin{split}
		\la_0^2&=\fiint_{Q_{2r}(z_0)}\left(H(z,|\na u|)+H(z,|F|)\right)\,dz+1,\\
		\La_0&=\la_0^p+\sup_{z\in Q_{2r}(z_0)}a(z)\la_0^q
		\quad\text{and}\quad
		\gamma=\frac{\alpha p}{n+2},
  \end{split}
\end{equation}
where $Q_{2r}(z_0)=B_{2r}(x_0)\times (t_0-(2r)^2,t_0+(2r)^2)$.
Moreover, let
\begin{align}\label{sec6:1}
	K=1+40[a]_{\alpha} M_1^\frac{\alpha}{n+2}\text{and}\quad
 \cv=10K.
\end{align}
With $\Psi(\La)$ and $\Phi(\La)$ as in \eqref{sec2:2}-\eqref{sec2:3} and $\rho\in[r,2r]$, we denote
\[
\Psi(\La,\rho)=\Psi(\La)\cap Q_{\rho}(z_0)=\{ z\in Q_{\rho}(z_0): H(z,|\na u(z)|)>\La\} 
\]
and
\[
\Phi(\La,\rho)=\Phi(\La)\cap Q_{\rho}(z_0)
=\{ z\in Q_{\rho}(z_0): H(z,|F(z)|)>\La\}.
\]

Next we apply a stopping time argument. Let $r\le r_1<r_2\le 2r$ and 
\begin{align}\label{sec6:4}
	\La>\left(\frac{4\cv r}{r_2-r_1}\right)^\frac{q(n+2)}{2}\La_0,
\end{align}
where $\cv$ is as in \eqref{sec6:1}.
For every $\mz\in \Psi(\La,r_1)$, let $\laz>0$ be such that
\begin{align}\label{sec6:5}
	\La=\laz^p+a(\mz)\laz^q=H_{\mz}(\laz),
\end{align}
where $H_{\mz}$ is as in \eqref{sec2:1}.
We claim that 
\begin{align*}
	\laz>\left(\frac{4\cv r}{r_2-r_1}\right)^\frac{n+2}{2}\la_0.
\end{align*}
For a contradiction, assume that the inequality above does not hold. Then
\begin{align*}
	\La< \left(\frac{4\cv r}{r_2-r_1}\right)^{\frac{q(n+2)}{2}}\left(\la_0^p+a(\mz)\la_0^q\right)\le \left(\frac{ 4\cv r}{r_2-r_1}\right)^{\frac{q(n+2)}{2}}\La_0,
\end{align*}
which is a contradiction with \eqref{sec6:4}.
Therefore, for $s\in [(r_2-r_1)/(2\cv),r_2-r_1)$, we have 
\begin{align} \label{sec6:5_1}
	\begin{split}
		&\fiint_{Q_s^{\laz}(\mz)}(H(z,|\na u|)+H(z,|F|))\,dz\\
		&\qquad\le\laz^{p-2}\left(\frac{2r}{s}\right)^{n+2}\fiint_{Q_{2r}(z_0)}(H(z,|\na u|)+H(z,|F|))\,dz\\
		&\qquad\le\left(\frac{4\cv r}{r_2-r_1}\right)^{n+2}\laz^{p-2}\la_0^2< \laz^p.
	\end{split}
\end{align}
Since $\mz\in \Psi(\La,r_1)$ and \eqref{sec6:5} holds, it follows that $\mz\in \Psi(\laz^p,r_1)$. By the Lebesgue differentiation theorem there exists $\rho_{\mz}\in(0,(r_2-r_1)/(2\cv))$ such that
\begin{align}\label{sec6:6}
	\fiint_{Q_{\rho_{\mz}}^{\laz}(\mz)}(H(z,|\na u|)+H(z,|F|))\,dz= \laz^p
\end{align}
and 
\begin{align}\label{sec6:7}
	\fiint_{Q_s^{\laz}(\mz)}(H(z,|\na u|)+H(z,|F|))\,dz<\laz^p
\end{align}
for every $s\in(\rho_{\mz},r_2-r_1)$.
Observe that~\eqref{sec6:6} and~\eqref{sec6:5_1}, imply that
\begin{equation} \label{sec6:72}
\laz \leq \left( \frac{2r}{\rho_{\mz}} \right)^\frac{n+2}{2} \la_0.
\end{equation}

For $K>1$ as in \eqref{sec6:1}, either 
\begin{align}\label{sec6:8}
	K\laz^p\ge a(\mz)\laz^q
	\quad\text{or}\quad 
	K\laz^p\le a(\mz)\laz^q.
\end{align}
In addition, either
\begin{align}\label{sec6:9}
	a(\mz)\ge 2[a]_{\alpha}(10\rho_{\mz})^\alpha
	\quad\text{or}\quad a(\mz)\le 2[a]_{\alpha}(10\rho_{\mz})^\alpha.
\end{align}
We consider three cases:
\begin{enumerate}[label=(\arabic*)]
\item\label{case1} $K\laz^p\ge a(\mz)\laz^q$, that is, the first condition in \eqref{sec6:8} holds,
\item\label{case2} $K\laz^p\le a(\mz)\laz^q$ and $a(\mz)\ge 2[a]_{\alpha}(10\rho_{\mz})^\alpha$, that is, the second condition in \eqref{sec6:8} and the first condition in \eqref{sec6:9} and
\item\label{case3} $K\laz^p\le a(\mz)\laz^q$ and $a(\mz)\le 2[a]_{\alpha}(10\rho_{\mz})^\alpha$, that is, the second condition in \eqref{sec6:8} and the second condition in \eqref{sec6:9} hold.
\end{enumerate}

First we note that \ref{case1}, together with \eqref{sec6:6}-\eqref{sec6:7}, imply \eqref{sec5:1} for $p$-intrinsic cylinders by replacing the center point and radius with $\mz$ and $\rho_{\mz}$. 
Next we show that \ref{case2} implies \eqref{sec5:2} for $(p,q)$-intrinsic cylinders. From the second condition in \eqref{sec6:8} we obtain $a(\mz)>0$ and $G_{s}^{\la_{\mz}}(w)\subsetneq Q_s^{\la_{\mz}}(w)$. By \eqref{sec6:6}-\eqref{sec6:7}, we obtain
\begin{align*}
		&\fiint_{G_{s}^{\laz}(\mz)}(H(z,|\na u|)+H(z,|F|))\,dz\\
		&\qquad\leq\frac{H_{\mz}(\laz)}{\laz^p}\fiint_{Q_{s}^{\laz}(\mz)}(H(z,|\na u|)+H(z,|F|))\,dz< H_{\mz}(\laz)
\end{align*}
for every $s\in(\rho_{\mz},r_2-r_1)$.
Recall that $\mz\in \Psi(\La,r_1)$ and $\La=H_{\mz}(\laz)$. We find $\varsigma_{\mz}\in(0,\rho_{\mz}]$ such that
\begin{align}\label{sec6:10}
	\fiint_{G_{\varsigma_{\mz}}^{\laz}(\mz)}H(z,|\na u|)+H(z,|F|)\,dz= H_{\mz}(\laz)
\end{align}
and
\begin{align*}
	\fiint_{G_s^{\laz}(\mz)}(H(z,|\na u|)+H(z,|F|))\,dz<H_{\mz}(\laz)
\end{align*}
for every $s\in(\varsigma_{\mz},r_2-r_1)$.
Moreover, it follows from the first condition in \eqref{sec6:9} that
\begin{align*}
	2[a]_{\alpha}(10\rho_{\mz})^\alpha\le a(\mz)\le \inf_{Q_{10\rho_{\mz}}(\mz)}a(z)+[a]_{\alpha}(10\rho_{\mz})^\alpha
\end{align*}
and
\begin{align*}
	\sup_{Q_{10\rho_{\mz}}(\mz)}a(z)\le \inf_{Q_{10\rho_{\mz}}(\mz)}a(z)+[a]_{\alpha}(10\rho_{\mz})^\alpha\le 2\inf_{Q_{10\rho_{\mz}}(\mz)}a(z).
\end{align*}
Therefore, 
\begin{align}\label{sec6:12}
	\frac{a(\mz)}{2}\le a(z)\le 2a(\mz)
	\text{ for every }
	z\in Q_{10\rho_{\mz}}(\mz).
\end{align}
Hence, \ref{case2} implies \eqref{sec6:10}-\eqref{sec6:12}. This shows that \eqref{sec5:2} is satisfied by replacing the center point and radius with $\mz$ and $\varsigma_{\mz}$.

Finally, we prove that \ref{case3} never occurs due to \eqref{sec6:1}. From the second condition in \eqref{sec6:8} and the second condition in \eqref{sec6:9}, we have
\begin{align*}
	K\laz^p=a(\mz)\frac{K\laz^p}{a(\mz)}\le 20[a]_{\alpha}\rho_{\mz}^{\alpha}\laz^q.
\end{align*}
By applying \eqref{sec6:6} and recalling that $\gamma=\tfrac{\alpha p}{n+2}$, we obtain
\begin{align*}
	K\laz^p\le 20[a]_{\alpha}\rho_{\mz}^\alpha\left(\fiint_{Q^{\laz}_{\rho_{\mz}} (\mz)}(H(z,|\na u|)+H(z,|F|))\,dz\right)^\frac{\gamma}{p}\laz^{q-\gamma}.
\end{align*}
Observe that
\begin{align*}
	\begin{split}
		&\rho_{\mz}^\alpha \left(\fiint_{Q_{\rho_{\mz}}^{\laz}(\mz)}(H(z,|\na u|)+H(z,|F|))\,dz\right)^\frac{\gamma}{p}\laz^{q-\gamma}\\
		&\qquad\le \rho_{\mz}^{\alpha-\frac{\gamma(n+2)}{p}}M_1^{\frac{\gamma}{p}}\laz^{q-\gamma+(p-2)\frac{\gamma}{p}}=M_1^\frac{\alpha}{n+2}\laz^{q-\gamma+(p-2)\frac{\gamma}{p}}
	\end{split}
\end{align*}
and
\begin{align*}
	q-\gamma+(p-2)\frac{\gamma}{p}=q-\frac{2\gamma}{p}=q-\frac{2\alpha }{n+2}\le p.
\end{align*}
It follows from \eqref{sec6:1} that
\begin{align*}
	\laz^p\le \frac{1}{2}\laz^{q-\gamma+(p-2)\frac{\gamma}{p}}\le \frac{1}{2}\laz^p.
\end{align*}
Therefore, the second condition in \eqref{sec6:8} and the second condition in \eqref{sec6:9} cannot occur together.

\subsection{Vitali type covering argument}
In section~\ref{section5} we considered reverse H\"older inequality, and in subsection~\ref{stopping time argument} we discussed a stopping time argument, for $p$-intrinsic and $(p,q)$-intrinsic cylinders. For each $\mz\in \Psi(\La,r_1)$, we consider
\[
\mathcal{Q}(\mz)=Q_{2\rho_{\mz}}^{\laz}(\mz)\text{ in \ref{case1}}
\quad\text{and}\quad
\mathcal{Q}(\mz)=G_{2\varsigma_{\mz}}^{\laz}(\mz)\text{ in \ref{case2}}.
\]

We prove a Vitali type covering lemma for this collection of intrinsic cylinders.
We denote
\begin{align*}
	\mathcal{F}=\left\{\mathcal{Q}(\mz): \mz\in \Psi(\La,r_1)\right\}
	\quad\text{and}\quad 
	l_{\mz}=
	\begin{cases}
		2\rho_{\mz}&\text{in\,\ref{case1}},\\
		2\varsigma_{\mz}&\text{in\,\ref{case2}}.
	\end{cases}
\end{align*}
Recall that $l_{\mz}\in (0,R)$ for every $\mz\in \Psi(\La,r_1)$, where $R=(r_2-r_1)/\cv$ and $\cv$ is as in \eqref{sec6:1}. Let
\begin{align*}
	\mathcal{F}_j=\left\{\mathcal{Q}(\mz)\in \mathcal{F}: \frac{R}{2^j}<l_{\mz}<\frac{R}{2^{j-1}} \right\},\quad j\in\mathbb N.
\end{align*}
We construct subcollections $\mathcal{G}_j\subset \mathcal{F}_j$, $j\in\mathbb N$, recursively as follows. Let $\mathcal{G}_1$ be a maximal disjoint collection of cylinders in $\mathcal{F}_1$.
By~\eqref{sec6:72} we observe that the measure of each cylinder in $\mathcal{G}_1$ is bounded from below, which implies that the collection is finite. Suppose that we have selected $\mathcal{G}_1,...,\mathcal{G}_{k-1}$ with $k\ge2$, and let
\begin{align*}
	\mathcal{G}_k=\Bigl\{ \mathcal{Q}(\mz)\in \mathcal{F}_k: \mathcal{Q}(\mz)\cap \mathcal{Q}(\mw)=\emptyset\text{ for every }\mathcal{Q}(\mw)\in \bigcup_{j=1}^{k-1}\mathcal{G}_j\Bigr\}    
\end{align*}
be a maximal collection of pairwise disjoint cylinders.
It follows that
\begin{align}\label{sec6:7_2}
	\mathcal{G}=\bigcup_{j=1}^\infty\mathcal{G}_j,
\end{align}
is a countable subcollection of pairwise disjoint cylinders in $\mathcal{F}$. We claim that for each $\mathcal{Q}(\mz)\in \mathcal{F}$, there exists $\mathcal{Q}(\mw)\in \mathcal{G}$ such that
\begin{align}\label{sec6:1_2}
	\mathcal{Q}(\mz)\cap \mathcal{Q}(\mw)\ne\emptyset
	\quad\text{and}\quad 
	\mathcal{Q}(\mz)\subset \cv\mathcal{Q}(\mw),
\end{align}
where 
\[
\cv\mathcal{Q}(\mw)=Q_{2\cv \rho_{\mw}}^{\la_{\mw}}(\mw)\text{ in \ref{case1}}
\quad\text{and}\quad 
\cv\mathcal{Q}(\mw)=G_{2\cv \varsigma_{\mw}}^{\la_{\mw}}(\mw)\text{ in \ref{case2}}.
\]
For every $\mathcal{Q}(\mz)\in \mathcal{F}$, there exists $j \in \mathbb{N}$ such that $\mathcal{Q}(\mz)\in \mathcal{F}_j$. 
By the construction of $\mathcal{G}_j$, there exists a cylinder 
$\mathcal{Q}(\mw)\in\cup_{i=1}^j \mathcal{G}_i$
for which the first condition in \eqref{sec6:1_2} holds true. 
Moreover, since $l_{\mz}\le \tfrac{R}{2^{j-1}}$
and $l_{\mw} \geq \tfrac{R}{2^j}$, we have 
\begin{align}\label{sec6:3_2}
	l_{\mz}\le 2l_{\mw}.
\end{align}

In the remaining of this subsection, we will prove the second claim in \eqref{sec6:1_2}. 
We note that if $\la=\la_{\mz}=\la_{\mw}$ and either 
\[
\mathcal{Q}(\mz)=Q_{l_{\mz}}^{\la}(\mz)
\quad\text{and}\quad 
\mathcal{Q}(\mw)=Q_{l_{\mw}}^{\la}(\mw)
\]
or 
\[
\mathcal{Q}(\mz)=G_{l_{\mz}}^{\la}(\mz)
\quad\text{and}\quad 
\mathcal{Q}(\mw)=G_{l_{\mw}}^{\la}(\mw),
\] 
then the second claim in \eqref{sec6:1_2} holds true if $\cv \geq5$. Indeed, once the scaling factor in the time interval of two intrinsic cylinders is the same, these cylinders are in the standard parabolic metric space. Thus the standard proof of Vitali's covering lemma can be applied in these cases.

Regardless of \ref{case1} and \ref{case2}, for $i\in\{v,w\}$, there exist $2\rho_i\ge l_i>0$ and $\la_i>0$ such that
\begin{align}\label{sec6:14}
	\La=\la_i^p+a(z_i)\la_i^q
\end{align}
and
\begin{align}\label{sec6:15}
	\fiint_{Q_{\rho_i}^{\la_i}(z_i)}(H(z,|\na u|)+H(z,|F|))\,dz=\la_i^p.
\end{align}
We show that the second claim in~\eqref{sec6:1_2} holds in all four possible cases that may occur:
\begin{enumerate}[label=(\roman*)]
\item\label{i} $\mathcal{Q}(v)=Q_{l_v}^{\la_v}(v)$ and $\mathcal{Q}(w)=Q_{l_w}^{\la_w}(w)$,
\item\label{ii} $\mathcal{Q}(v)=G_{l_v}^{\la_v}(v)$ and $ \mathcal{Q}(w)=G_{l_w}^{\la_w}(w)$,
\item\label{iii} $\mathcal{Q}(v)=G_{l_v}^{\la_v}(v)$ and $ \mathcal{Q}(w)=Q^{\la_w}_{l_w}(w)$ and
\item\label{iv} $\mathcal{Q}(v)=Q_{l_v}^{\la_v}(v)$ and $\mathcal{Q}(w)=G_{l_w}^{\la_w}(w)$.
\end{enumerate} 
Observe that in any of these cases, the first condition in~\eqref{sec6:1_2} implies that $Q_{l_{w}}(w) \cap Q_{l_{v}}(v) \neq \emptyset$ and 
\begin{equation} \label{sec6:152}
Q_{l_{w}}(w) \subset Q_{5l_{v}}(v)\subset Q_{10\rho_v}(v).
\end{equation}
This will already imply that the second claim in~\eqref{sec6:1_2} holds for the spatial part of the set by enlarging the radius by factor 5. In the rest of this subsection, we show the inclusion of the time intervals when enlarging the radius with factor $\cv$ by considering each case separately. 

First we collect a few facts that will be applied in the argument. By~\eqref{sec6:152} we have
\begin{equation} \label{sec6:153}
| a(w) - a(v) | \leq [a]_\alpha (10  \rho_v)^\alpha.
\end{equation}
On the other hand, from~\eqref{sec6:15}, we may deduce that 
\begin{align} \label{sec6:154}
\rho_v^{n+2} = \frac{1}{ 2 \left| B_1 \right| \lambda_v^2 } \iint_{Q_{\rho_{v}}^{\la_v}(v)}(H(z,|\na u|)+H(z,|F|))\,dz\le \frac{M_1}{\lambda_v^2},
\end{align}

If $\la_w \le \la_v$, we claim that 
\begin{align}\label{sec6:18}
	\la_v\le\left(2\left(1+10[a]_{\alpha } M_1^\frac{\alpha}{n+2}\right)\right)^\frac{1}{p}\la_w.
\end{align}
For a contraction, assume that \eqref{sec6:18} does not hold. It follows from \eqref{sec6:14} and \eqref{sec6:153} that
\begin{align}\label{sec6:188}
	\begin{split}
		\La=\la_w^p+a(w)\la_w^q\le \la_w^p+a(v)\la_w^q+[a]_{\alpha}(10\rho_v)^{\alpha}\la_w^q.
	\end{split}
\end{align}
By~\eqref{sec6:154} we obtain
\begin{align*}
\rho_v^\alpha \lambda_w^q \leq  M_1^\frac{\alpha}{n+2} \lambda_v^{-\frac{2\alpha}{n+2} } \lambda_w^q < M_1^\frac{\alpha}{n+2} \lambda_w^{q-\frac{2\alpha}{n+2}} \leq M_1^\frac{\alpha}{n+2} \lambda_w^p,
\end{align*}
since $\lambda_w \le \lambda_v$ and $q \leq p + 2\alpha/(n+2)$. 
Substituting the negation of \eqref{sec6:18} and the above display into the right-hand side of \eqref{sec6:188} leads to a contradiction since
\begin{align*}
	\La < \frac{1}{2}\left(\la_v^p+a(v)\la_v^q\right)=\frac{1}{2}\La.
\end{align*}
On the other hand, if $\la_v\le \la_w$, we claim that
\begin{align*}
    \la_w\le\left(2\left(1+10[a]_\alpha M_1^\frac{\alpha}{n+2}\right)\right)^\frac{1}{p}\la_v.
\end{align*}
Otherwise, it follows from \eqref{sec6:154} that
\begin{align*}
\begin{split}
    \La&=\la_v^p+a(v)\la_v^q\le \la_v^p+a(w)\la_v^q+[a]_{\alpha}(10\rho_v)^\alpha\la_v^q\\
    &\le \la_v^p+a(w)\la_v^q+10[a]_\alpha M_1^\frac{\alpha}{n+2}\la_v^{q-\frac{2\alpha}{n+2}}\\
    &\le \left(1+10[a]_\alpha M_1^\frac{\alpha}{n+2}\right)\la_v^p+a(w)\la_v^q< \frac{1}{2}(\la_w^p+a(w)\la_w^q)=\frac{1}{2}\La.
\end{split}
\end{align*}
In any case we have
\begin{equation} \label{sec6:181}
(2K)^{-\frac{1}{p}}\la_w\le \la_v \leq (2K)^\frac{1}{p} \la_w.
\end{equation}
Let $v=(x_v,t_v)$ and $w=(x_w,t_w)$ for $x_v,x_w\in \RR^n$ and $t_v,t_w\in \RR$.

\ref{i}: $\mathcal{Q}(v)=Q_{l_v}^{\la_v}(v)$ and $\mathcal{Q}(w)=Q_{l_w}^{\la_w}(w)$.
\,For any $\tau \in I_{l_w}^{\la_w}(t_w)$, we apply \eqref{sec6:3_2},~\eqref{sec6:181} and $(p-2)/p\le 1$ to have
\begin{align*}
\begin{split}
    |\tau - t_v| &\leq |\tau - t_w| + |t_w - t_v| \leq 2\la_w^{2-p} l_w^2 + \la_v^{2-p} l_v^2\\
    &\leq \left(16 K +1 \right) \la_v^{2-p} l_v^{2}\le 100K^2\la_v^{2-p}l_v^2= \la_v^{2-p}(\cv l_v)^2,
\end{split}
\end{align*}
which implies $I_{l_w}^{\la_w}(t_w) \subset \cv I_{ l_\mw}^{\la_\mw}(t_v)$. Thus, we have
$Q_{l_w}^{\la_w}(w) \subset \cv Q_{ l_v}^{\la_v}(v)$.

\ref{ii}: $\mathcal{Q}(v)=G_{l_v}^{\la_v}(v)$ and $ \mathcal{Q}(w)=G_{l_w}^{\la_w}(w)$.
For any $\tau \in J_{l_w}^{\la_w}(t_w)$, we have
\begin{align*}
    |\tau - t_v| \leq |\tau - t_w| + |t_w - t_v|\leq 2\frac{\la_w^2}{H_{w}(\la_w)} l_w^2 + \frac{\la_v^2}{H_{v}(\la_v)} l_v^2.
\end{align*}
By~\eqref{sec6:14},~\eqref{sec6:181} and $2/p\le1$ we have
\begin{align*}
	\frac{\la_w^2}{H_{w}(\la_w)}=\frac{\la_w^2}{\La}\le 2K\frac{\la_v^2}{\La} = 2K\frac{\la_v^2}{H_{v}(\la_v)}.
\end{align*}
Therefore applying~\eqref{sec6:3_2}, we obtain
\begin{align*}
    |\tau - t_v| \leq (16K+1)\frac{\la_v^2}{H_{v}(\la_v)} l_v^2\le 100K^2\frac{\la_v^2}{H_{v}(\la_v)}l_v^2= \frac{\la_v^2}{H_{v}(\la_v)^2}(\cv l_v)^2.
\end{align*}
This implies that $J_{l_w}^{\la_w}(t_w) \subset \cv J_{ l_v}^{\la_v}(t_v)$. Thus, we have
$G_{l_w}^{\la_w}(w) \subset \cv G_{ l_v}^{\la_v}(v)$.

\ref{iii}: $\mathcal{Q}(v)=G_{l_v}^{\la_v}(v)$ and $ \mathcal{Q}(w)=Q^{\la_w}_{l_w}(w)$. 
For any $\tau \in I_{l_w}^{\la_w}(t_w)$, we have from \eqref{sec6:14} that
\begin{equation} \label{sec6:232}
|\tau - t_v|  \leq |\tau - t_w| + |t_w - t_v|\leq 2 \la_w^{2-p}l_w^2 + \frac{\la_v^2}{H_v(\la_v)} l_v^2 = 2 \la_w^{2-p}l_w^2 + \frac{\la_v^2}{\La} l_v^2
\end{equation}
Recalling $K\la_w^p\ge a(w)\la_w^q$, we apply~\eqref{sec6:181}, $2/p\le1$ and~\eqref{sec6:14} to get 
\begin{align*}
\la_w^{2-p} \leq \frac{2 \la_w^2}{\la_w^p + \tfrac{a(w)}{K} \la_w^q} \leq \frac{2K \la_w^2}{\la_w^p + a(w) \la_w^q} \leq 4K^2 \frac{\la_v^2}{\la_w^p + a(w)\la_w^q}=4K^2 \frac{\la_v^2}{\La},
\end{align*}
which, together with~\eqref{sec6:232} and \eqref{sec6:3_2}, implies
\begin{equation*}
|\tau - t_v| \leq (32K^2+1)\frac{\la_v^2}{\La}l_v^2\le 100K^2\frac{\la_v^2}{\La}l_v^2=\frac{\la_v^2}{\La}(\cv l_v)^2.
\end{equation*}
Therefore $I_{l_w}^{\la_w}(t_w) \subset \cv J_{ l_v}^{\la_v}(t_v)$ and $Q_{l_w}^{\la_w}(w)\subset \cv G_{l_v}^{\la_v}(v)$.

\ref{iv}: $\mathcal{Q}(v)=Q_{l_v}^{\la_v}(v)$ and $\mathcal{Q}(w)=G_{l_w}^{\la_w}(w)$.
For any $\tau \in J_{l_w}^{\la_w}(t_w)$, we apply \eqref{sec6:3_2},~\eqref{sec6:181} and $(p-2)/p\le 1$ to have
\begin{align*}
    |\tau - t_v|&\leq |\tau - t_w| + |t_w - t_v|\leq 2 \frac{\la_w^2}{H_w(\la_w)}l_w^2 + \la_v^{2-p} l_v^2 \\
    &\le  2 \la_w^{2-p}l_w^2 + \la_v^{2-p} l_v^2 \le (16K+1)\la_v^{2-p}l_v^2\\
    &\le 100K^2\la_v^{2-p}l_v^2=\la_v^{2-p}(\cv l_v)^2.
\end{align*}
Therefore $J^{\la_w}_{l_w}(t_w)\subset \cv I_{l_v}^{\la_v}(t_v)$  and $G_{l_w}^{\la_w}(w) \subset \cv Q_{l_v}^{\la_v}(v)$.
Since we have covered every case, the proof of the second condition in \eqref{sec6:1_2} is completed.

\subsection{Final proof of the gradient estimate}
We write the countable pairwise disjoint collection $\mathcal{G}$ defined in \eqref{sec6:7_2} as
$\mathcal{G}=\cup_{j=1}^\infty\mathcal{Q}_j$,
where $\mathcal{Q}_j=\mathcal{Q}(\mz_j)$ with $\mz_j \in \Psi(\Lambda,r_1)$.

Lemma~\ref{sec5:lem:4} and Lemma~\ref{sec5:lem:6} imply that there exist $c=c(\data)$ and $\theta_0=\theta_0(n,p,q)\in(0,1)$ such that
\begin{align*}
		\iint_{\cv\mathcal{Q}_{j}}H(z,|\na u|)\,dz
		\le c\La^{1-\theta}\iint_{\mathcal{Q}_j\cap \Psi(c^{-1}\La)}H(z,|\na u|)^\theta\,dz
		+c\iint_{\mathcal{Q}_j\cap \Phi(c^{-1}\La)}H(z,|F|)\,dz
\end{align*}
for every $j\in\mathbb{N}$ with $\theta= (\theta_0+1)/2$.
By summing over $j$ and applying the fact that the cylinders in $\mathcal{G}$ are pairwise disjoint, we obtain
\begin{equation}\label{sec6:25}
\begin{split}
&\iint_{\Psi(\La,r_1)}H(z,|\na u|)\,dz
\le\sum_{j=1}^\infty\iint_{\cv\mathcal{Q}_{j}}H(z,|\na u|)\,dz\\
&\qquad\le c\La^{1-\theta}\sum_{j=1}^\infty\iint_{\mathcal{Q}_j\cap \Psi(c^{-1}\La)}H(z,|\na u|)^\theta\,dz
+c\sum_{j=1}^\infty\iint_{\mathcal{Q}_j\cap \Phi(c^{-1}\La)}H(z,|F|)\,dz\\
&\qquad\le c\La^{1-\theta}\iint_{\Psi(c^{-1}\La,r_2)}H(z,|\na u|)^\theta\,dz
+c\iint_{\Phi(c^{-1}\La,r_2)}H(z,|F|)\,dz.
\end{split}
\end{equation}
Moreover, since
\[
\iint_{\Psi(c^{-1}\La,r_1)\setminus \Psi(\La,r_1)}H(z,|\na u|)\,dz
\le\La^{1-\theta}\iint_{\Psi(c^{-1}\La,r_2)}H(z,|\na u|)^{\theta}\,dz,
\]
we conclude from \eqref{sec6:25} that
\begin{equation}\label{sec6:26}
	\begin{split}
		&\iint_{\Psi(c^{-1}\La,r_1)}H(z,|\na u|)\,dz\\
		&\qquad\le c\La^{1-\theta}\iint_{\Psi(c^{-1}\La,r_2)}H(z,|\na u|)^\theta\,dz
		+c\iint_{\Phi(c^{-1}\La,r_2)}H(z,|F|)\,dz.
	\end{split}
\end{equation}

For $k\in\mathbb N$, let
\[		
H(z,|\na u|)_k=\min\{H(z,|\na u|),k\}
\]
and
\[
\Psi_k(\La,\rho)=\{z\in Q_{\rho}(z_0):H(z,|\na u(z)|)_k>\La\}.
\]
It is easy to see that if $\La>k$, then $\Psi_k(\La,\rho)=\emptyset$ and if $\La\le k$, then $\Psi_k(\La,\rho)=\Psi(\La,\rho)$. Therefore, we deduce from \eqref{sec6:26} that
\begin{align*}
	\begin{split}
		&\iint_{\Psi_k(c^{-1}\La,r_1)}\left(H(z,|\na u|)_k\right)^{1-\theta}H(z,|\na u|)^\theta\,dz\\
		&\qquad\le c\La^{1-\theta}\iint_{\Psi_k(c^{-1}\La,r_2)}H(z,|\na u|)^\theta\,dz+c\iint_{\Phi(c^{-1}\La,r_2)}H(z,|F|)\,dz.
	\end{split}
\end{align*}

Recalling \eqref{sec6:4}, we denote
\begin{align*}
	\La_1=c^{-1}\left(\frac{4\cv r}{r_2-r_1}\right)^\frac{q(n+2)}{2}\La_0.
\end{align*}
Then for any $\La>\La_1$, we obtain
\begin{align*}
	\begin{split}
		&\iint_{\Psi_k(\La,r_1)}\left(H(z,|\na u|)_k\right)^{1-\theta}H(z,|\na u|)^\theta\,dz\\
		&\qquad\le c\La^{1-\theta}\iint_{\Psi_k(\La,r_2)}H(z,|\na u|)^\theta\,dz+c\iint_{\Phi(\La,r_2)}H(z,|F|)\,dz.
	\end{split}
\end{align*}

Let $\ep\in(0,1)$ to be chosen later. We multiply the inequality above by $\La^{\ep-1}$ and integrate each term over $(\La_1,\infty)$, which implies
\begin{align*}
	\begin{split}
		\mathrm{I}&=\int_{\La_1}^{\infty}\La^{\ep-1}\iint_{\Psi_k(\La,r_1)}\left(H(z,|\na u|)_k\right)^{1-\theta}H(z,|\na u|)^\theta\,dz\,d\La\\
		&\le c\int_{\La_1}^{\infty}\La^{\ep-\theta}\iint_{\Psi_k(\La,r_2)}H(z,|\na u|)^\theta\,dz\,d\La
		+c\int_{\La_1}^{\infty}\La^{\ep-1}\iint_{\Phi(\La,r_2)}H(z,|F|)\,dz\,d\La \\
		&= \mathrm{II}+ \mathrm{III}.
	\end{split}
\end{align*}

We apply Fubini's theorem to estimate $\mathrm{I}$ and obtain
\begin{align*}
	\begin{split}
		\mathrm{I}
		&=\frac{1}{\ep}\iint_{\Psi_k(\La_1,r_1)}\left(H(z,|\na u|)_k\right)^{1-\theta+\ep}H(z,|\na u|)^\theta\,dz\\
		&\qquad-\frac{1}{\ep}\La_1^\ep\iint_{\Psi_k(\La_1,r_1)}\left(H(z,|\na u|)_k\right)^{1-\theta}H(z,|\na u|)^\theta\,dz.
	\end{split}
\end{align*}
Since 
\begin{align*}
	\begin{split}
		&\iint_{Q_{r_1}(z_0)\setminus \Psi_k(\La_1,r_1)}\left(H(z,|\na u|)_k\right)^{1-\theta+\ep}H(z,|\na u|)^\theta\,dz\\
		&\qquad\le \La_1^{\ep}\iint_{Q_{2r}(z_0)}\left(H(z,|\na u|)_k\right)^{1-\theta}H(z,|\na u|)^\theta\,dz,
	\end{split}
\end{align*}
we have
\begin{align*}
	\begin{split}
		\mathrm{I}\ge& \frac{1}{\ep}\iint_{Q_{r_1}(z_0)}\left(H(z,|\na u|)_k\right)^{1-\theta+\ep}H(z,|\na u|)^\theta\,dz\\
		&\qquad-\frac{2}{\ep}\La_1^\ep\iint_{Q_{2r}(z_0)}\left(H(z,|\na u|)_k\right)^{1-\theta}H(z,|\na u|)^\theta\,dz.
	\end{split}
\end{align*}
Similarly, by Fubini's theorem, we have
\begin{align*}
		\mathrm{II}
		\le\frac{1}{1-\theta+\ep}\iint_{Q_{r_2}(z_0)}\left(H(z,|\na u|)_k\right)^{1-\theta+\ep}H(z,|\na u|)^\theta \,dz
\end{align*}
and
\begin{align*}
		\mathrm{III}\le \frac{1}{\ep}\iint_{Q_{2r}(z_0)}H(z,|F|)^{1+\ep}\,dz.
\end{align*}

By combining the estimates above we obtain
\begin{align*}
	\begin{split}
		&\iint_{Q_{r_1}(z_0)}\left(H(z,|\na u|)_k\right)^{1-\theta+\ep}H(z,|\na u|)^\theta\,dz\\
		&\qquad\le \frac{c\ep}{1-\theta+\ep}\iint_{Q_{r_2}(z_0)}\left(H(z,|\na u|)_k\right)^{1-\theta+\ep}H(z,|\na u|)^\theta \,dz\\
		&\qquad\qquad+c\La_1^\ep\iint_{Q_{2r}(z_0)}\left(H(z,|\na u|)_k\right)^{1-\theta}H(z,|\na u|)^\theta\,dz
		+c\iint_{Q_{2r}(z_0)}H(z,|F|)^{1+\ep}\,dz.
	\end{split}
\end{align*}
We choose $\ep_0=\ep_0(\data)\in(0,1)$ so that for any $\ep\in(0,\ep_0)$,
\begin{align*}
	\frac{c\ep}{1-\theta+\ep}\le\frac{1}{2}.
\end{align*}
Then, by applying Lemma~\ref{sec2:lem:2} we get
\begin{align*}
	\begin{split}
		&\iint_{Q_{r}(z_0)}\left(H(z,|\na u|)_k\right)^{1-\theta+\ep}H(z,|\na u|)^\theta\,dz\\
		&\qquad\le c\La_0^\ep\iint_{Q_{2r}(z_0)}\left(H(z,|\na u|)_k\right)^{1-\theta}H(z,|\na u|)^\theta\,dz+c\iint_{Q_{2r}(z_0)}H(z,|F|)^{1+\ep}\,dz.
	\end{split}
\end{align*}
The claim follows by letting $k\longrightarrow\infty$ and recalling \eqref{sec6:01}.

\medskip
\textbf{Acknowledgments.} This research was started during a visit of the first author at the Department of Mathematics of Aalto University. He would like to thank the Nonlinear Partial Differential Equations group for the kind and warm hospitality.


\end{document}